%% LyX 2.3.3 created this file.  For more info, see http://www.lyx.org/.
%% Do not edit unless you really know what you are doing.
\documentclass[12pt,american]{amsart}
\usepackage[T1]{fontenc}
\usepackage[latin9]{inputenc}
\usepackage{geometry}
\geometry{verbose,tmargin=1in,bmargin=1in,lmargin=1in,rmargin=1in}
\usepackage{xcolor}
\usepackage{pdfcolmk}
\definecolor{note_fontcolor}{rgb}{0.800781, 0.800781, 0.800781}
\usepackage{array}
\usepackage{calc}
\usepackage{multirow}
\usepackage{amstext}
\usepackage{amsthm}
\usepackage{amssymb}
\usepackage{stmaryrd}
\usepackage{graphicx}
\PassOptionsToPackage{normalem}{ulem}
\usepackage{ulem}

\makeatletter

%%%%%%%%%%%%%%%%%%%%%%%%%%%%%% LyX specific LaTeX commands.
%% Because html converters don't know tabularnewline
\providecommand{\tabularnewline}{\\}
%% The greyedout annotation environment
\newenvironment{lyxgreyedout}
  {\textcolor{note_fontcolor}\bgroup\ignorespaces}
  {\ignorespacesafterend\egroup}
\providecolor{lyxadded}{rgb}{0,0,1}
\providecolor{lyxdeleted}{rgb}{1,0,0}
%% Change tracking with ulem

\DeclareRobustCommand{\lyxsout}[1]{\ifx\\#1\else\sout{#1}\fi}

%%%%%%%%%%%%%%%%%%%%%%%%%%%%%% Textclass specific LaTeX commands.
\numberwithin{equation}{section}
\numberwithin{figure}{section}
\numberwithin{table}{section}
\theoremstyle{plain}
\newtheorem{thm}{\protect\theoremname}[section]
\theoremstyle{definition}
\newtheorem{example}[thm]{\protect\examplename}
\theoremstyle{plain}
\newtheorem{prop}[thm]{\protect\propositionname}
\theoremstyle{definition}
\newtheorem{defn}[thm]{\protect\definitionname}
\theoremstyle{plain}
\newtheorem{conjecture}[thm]{\protect\conjecturename}
\theoremstyle{remark}
\newtheorem{rem}[thm]{\protect\remarkname}
\theoremstyle{plain}
\newtheorem{lem}[thm]{\protect\lemmaname}
\theoremstyle{plain}
\newtheorem{cor}[thm]{\protect\corollaryname}
\theoremstyle{remark}
\newtheorem{note}[thm]{\protect\notename}
\theoremstyle{plain}
\newtheorem{question}[thm]{\protect\questionname}

%%%%%%%%%%%%%%%%%%%%%%%%%%%%%% User specified LaTeX commands.
% \usepackage{wasysym}
% \usepackage[fontsize=14pt]{scrextend}
% \usepackage{extsizes}
\usepackage{graphicx}
\usepackage{mathtools}
\usepackage{tikz}

\newtheoremstyle{plain}
  {5pt plus 1pt minus 1pt}   % ABOVESPACE
  {5pt plus 1pt minus 1pt}   % BELOWSPACE
  {}  % BODYFONT
  {0pt}       % INDENT (empty value is the same as 0pt)
  {\bfseries} % HEADFONT
  {.}         % HEADPUNCT
  {5pt plus 1pt minus 1pt} % HEADSPACE
  {}          % CUSTOM-HEAD-SPEC

\setlength{\jot}{0pt} % this is to remove extra space between lines in aligned environment.

\usepackage{enumitem}
\setlist{itemsep=0pt,topsep=0pt,parsep=1pt,partopsep=0pt}

\renewenvironment{lyxgreyedout}{\textcolor{olive}\bgroup}{\egroup}
\tolerance=1000

\makeatother

\usepackage{babel}
\usepackage{listings}
\providecommand{\conjecturename}{Conjecture}
\providecommand{\corollaryname}{Corollary}
\providecommand{\definitionname}{Definition}
\providecommand{\examplename}{Example}
\providecommand{\lemmaname}{Lemma}
\providecommand{\notename}{Note}
\providecommand{\propositionname}{Proposition}
\providecommand{\questionname}{Question}
\providecommand{\remarkname}{Remark}
\providecommand{\theoremname}{Theorem}

\begin{document}
\global\long\def\diam{\operatorname{diam}}%

\global\long\def\pty{\operatorname{pty}}%

\global\long\def\Col{\operatorname{Col}}%

\global\long\def\cip{\operatorname{cir}}%

\global\long\def\sys{\operatorname{sys}}%

\global\long\def\Cay{\operatorname{Cay}}%

\global\long\def\Aut{\operatorname{Aut}}%

\global\long\def\R{\mathbb{R}}%

\global\long\def\vv{\rule{.4pt}{1ex}}%

\begin{lyxgreyedout}
\end{lyxgreyedout}

\title[Invariant Synchrony Subspaces]{Invariant Synchrony Subspaces of Sets of Matrices}
\author{John M. Neuberger, N\'andor Sieben, James W. Swift}
\curraddr{Northern Arizona University, Department of Mathematics and Statistics,
Flagstaff, AZ 86011-5717, USA}
\email{john.neuberger@nau.edu}
\email{nandor.sieben@nau.edu}
\email{jim.swift@nau.edu}
\thanks{Date: \the\month/\the\day/\the\year}
\keywords{coupled cell network, synchrony, equitable partition, almost equitable
partition, balanced partition, exo-balanced partition, tactical decomposition}
\subjclass[2000]{15A72, 34C14, 34C15, 37C80, 06B23, 90C35,}
\begin{abstract}
A synchrony subspace of $\R^{n}$ is defined by setting certain components
of the vectors equal according to an equivalence relation. Synchrony
subspaces invariant under a given set of square matrices ordered by
inclusion form a lattice. Applications of these invariant synchrony
subspaces include equitable and almost equitable partitions of the
vertices of a graph used in many areas of graph theory, balanced and
exo-balanced partitions of coupled cell networks, and coset partitions
of Cayley graphs. We study the basic properties of invariant synchrony
subspaces and provide many examples of the applications. We also present
what we call the\textcolor{blue}{{} }\textcolor{black}{split and cir}
algorithm for finding the lattice of invariant synchrony subspaces.
Our theory and algorithm is further generalized for non-square matrices.
This leads to the notion of tactical decompositions studied for its
application in design theory.

\end{abstract}

\maketitle

\section{Introduction}

Invariant subspaces of matrices play an important role in many areas
of mathematics \cite{invariant}. We study a special type of invariant
subspace, the invariant synchrony subspace where certain components
are equal, and further generalize this notion by considering invariance
with respect to more than one matrix. These invariant synchrony subspaces
come from certain partitions of the components, which we call invariant
partitions. Invariant partitions have a surprisingly large number
of applications and have been studied, often in different form, in
graph theory, network theory, mathematical biology,\textcolor{blue}{{}
}and other areas of mathematics. For example, they relate to almost
equitable partitions, coupled cell networks, dynamical systems, network
controllability, and differential equations. Our own interest in the
subject initiated from needing to understand all possible bifurcations
for a given family of semilinear elliptic PDE, including cases where
symmetry did not fully describe the invariant spaces. In that research
it became apparent that what we called anomalous invariant subspaces
can occur quite frequently, and must be understood in order to robustly
and efficiently find and follow new branches of solutions with local
symmetry. Thus, it is desirable to be able to find all invariant partitions,
which form a lattice. Our main contribution is the \emph{split and
cir} algorithm that finds this lattice relatively well, by which we
mean it has some efficiencies that allow a modestly large calculation,
and for multiple matrices. A secondary goal of this article is to
provide a common framework that brings together different areas of
research so that we can better share results.

There are a number of papers that describe algorithms for finding
the coarsest invariant partitions for a single matrix \cite{Aldis,Bastert,Belykh}.
These algorithms were developed for algebraic graph theory and are
often described in graph theoretic terminology (e.g., vertex degree).
For example, this is used for the graph isomorphism problem in Nauty
\cite{nauty}.

Our cir algorithm is a slight generalization of these algorithms.
It finds the coarsest invariant refinement of a given partition for
a set of matrices. The coarsest invariant partition is the cir of
the singleton partition. Our implementation of cir was inspired by
\cite{Godsil,ZhangCamlibelCao}. By recursively splitting classes
of an invariant partition and finding the cir of each of these finer
partitions, we compute the lattice of invariant partitions. This provides
us with a relatively efficient split and cir algorithm for the NP-complete
problem of finding the lattice of invariant partitions. The recursive
combination of cir with class splitting is the main contribution of
our paper. 

Our algorithm is close to that of \cite{KameiLattice}. They find
the lattice of invariant partitions by finding the coarsest invariant
partition and then check every finer partition by brute force. We
believe that our split and cir algorithm with the recursive application
of cir is more efficient, allowing for larger calculations.

By quite different means toward a similar end, the cell network results
of \cite{Aguiar&Dias} use eigenvectors of each individual adjacency
matrix after dividing the network into regular subnetworks. We believe
that our algorithm is more numerically stable since it uses integer
arithmetic.

We generalize our split and cir algorithm a bit further to allow non-square
matrices. This handles a possibly generalized notion (more than one
matrix) of tactical decompositions used in design theory.

The organization of the paper is as follows. In Section~2 we make
some definitions and examples concerning the partial ordering of partitions.
We define synchrony subspaces in Section~3, where we give some useful
facts concerning the lattice of partitions. In Section~4 we define
invariant synchrony subspaces for collections of matrices, and show
that invariant partitions form a lattice. Applications of invariant
synchrony subspaces are provided in Section~5. We feature equitable
and almost equitable partitions of graphs, balanced and exo-balanced
partitions of coupled cell networks, weighted cell network systems,
network controllability, Cayley graphs, and finite difference methods
for PDE. The invariant subspaces for all of these applications can
be treated in terms of invariant partitions for the appropriate collection
of matrices. Section~6 provides some technical tools needed for the
cir algorithm, which is developed in Section~7. We provide the split
and cir algorithm for computing the lattice of invariant partitions
in Section~8. We parallel this development for non-square matrices
with applications to tactical decompositions in Sections~9, 10, and
11.

\section{Preliminaries}

We recall some terminology and notation for partially ordered sets.
Our general reference is \cite{Gratzer}. A set $P$ with a relation
$\le$ is a \emph{partially ordered set} or \emph{poset} if the relation
is reflexive, antisymmetric, and transitive. The \emph{down-set} of
an element $x$ of $P$ is 
\[
\downarrow x:=\{y\in P\mid y\le x\}.
\]

A poset $L$ is a \emph{lattice} if the \emph{join} $a\vee b:=\sup\{a,b\}$
and \emph{meet} $a\wedge b:=\inf\{a,b\}$ exist for all $a,b\in L$.
Equivalently, $L$ is a lattice if $\bigvee H:=\sup(H)$ and $\bigwedge H:=\inf(H)$
exist for any finite subset $H$ of $L$. A lattice $L$ is called
\emph{complete} if $\bigvee H$ and $\bigwedge H$ exist for any subset
$H$ of $L$.

An element $y$ of a poset \emph{covers} another element $x$ if $x<y$
but $x<z<y$ does not hold for any $z$. We use the notation $x\prec y$
for the covering relation. We also say that $x$ is a \emph{lower
cover} of $y$ and $y$ is an \emph{(upper) cover} of $x$.
\begin{example}
The set $L(\mathbb{R}^{n})$ of subspaces of $\mathbb{R}^{n}$ is
a poset under reversed containment. It is also a lattice such that
$U\vee V=U\cap V$ and $U\wedge V=U+V$.
\end{example}

\begin{example}
Let $\Pi(n)$ be the set of partitions of the set $\{1,\ldots,n\}$.
We are going to simply write $\Pi$ if $n$ is clear from the context.
We refer to an element of a partition as an (equivalence) class. The
class of $i$ is denoted by $[i]$. A partition $\mathcal{B}$ is
\emph{coarser} than another partition $\mathcal{A}$ if every class
of $\mathcal{A}$ is contained in some class of $\mathcal{B}$. Equivalently,
$\mathcal{B}$ is coarser than $\mathcal{A}$ exactly when $[i]_{\mathcal{A}}=[j]_{\mathcal{A}}$
implies $[i]_{\mathcal{B}}=[j]_{\mathcal{B}}$ for all $i$ and $j$.
In this case we write $\mathcal{A}\le\mathcal{B}$ and also say that
$\mathcal{A}$ is \emph{finer} than $\mathcal{B}$ or that $\mathcal{A}$
is a \emph{refinement} of $\mathcal{B}$. The set $\Pi(n)$ with this
order forms a lattice \cite[Theorem 4.11]{universalAlgebra}. The
top element of this lattice is the singleton partition $\{\{1,\ldots,n\}\}$
and the bottom element is the discrete partition $\{\{1\},\ldots,\{n\}\}$.
We use the simplified notation for a partition where we list the elements
of the classes separated by a bar. So the singleton partition is $12\cdots n$
and the discrete partition is $1|2|\cdots|n$.
\end{example}

A partition $\mathcal{B}$ covers another partition $\mathcal{A}$
if $\mathcal{A}$ can be constructed from $\mathcal{B}$ by splitting
one of the classes of $\mathcal{B}$ into two nonempty sets. More
precisely, we have the following.
\begin{prop}
\label{prop:spliting}For $\mathcal{A},\mathcal{B}\in\Pi(n)$, $\mathcal{A}\prec\mathcal{B}$
if and only if $\mathcal{B}=\mathcal{A}\cup\{A\cup B\}\setminus\{A,B\}$
for some $A,B\in\mathcal{A}$.
\end{prop}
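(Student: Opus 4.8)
The plan is to prove the two implications separately, supported by one auxiliary observation that converts the covering relation into a statement about integers. For $\mathcal{A},\mathcal{C}\in\Pi(n)$ with $\mathcal{A}\le\mathcal{C}$, write $|\mathcal{A}|$ for the number of classes of $\mathcal{A}$. Since each class of $\mathcal{C}$ is a disjoint union of classes of $\mathcal{A}$, counting gives $|\mathcal{A}|\ge|\mathcal{C}|$, with equality precisely when every class of $\mathcal{C}$ contains exactly one class of $\mathcal{A}$, that is, when $\mathcal{A}=\mathcal{C}$. Hence $\mathcal{A}<\mathcal{C}$ forces $|\mathcal{C}|<|\mathcal{A}|$. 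I will use this repeatedly.

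For the implication $(\Leftarrow)$, suppose $\mathcal{B}=\mathcal{A}\cup\{A\cup B\}\setminus\{A,B\}$ for distinct $A,B\in\mathcal{A}$. First I would verify $\mathcal{A}<\mathcal{B}$: every class of $\mathcal{A}$ lies in a class of $\mathcal{B}$, since the classes other than $A,B$ are unchanged while $A,B\subseteq A\cup B$, and $\mathcal{A}\ne\mathcal{B}$ because $A\cup B$ is not a class of $\mathcal{A}$. Moreover $|\mathcal{A}|=|\mathcal{B}|+1$, as exactly two classes are replaced by one. Now if some $\mathcal{C}$ satisfied $\mathcal{A}<\mathcal{C}<\mathcal{B}$, the auxiliary observation would give $|\mathcal{B}|<|\mathcal{C}|<|\mathcal{A}|=|\mathcal{B}|+1$, which is impossible for integers. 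Thus no partition lies strictly between $\mathcal{A}$ and $\mathcal{B}$, i.e. $\mathcal{A}\prec\mathcal{B}$.

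For the implication $(\Rightarrow)$, suppose $\mathcal{A}\prec\mathcal{B}$. Since $\mathcal{A}<\mathcal{B}$, each class of $\mathcal{B}$ is a union of classes of $\mathcal{A}$, and because $\mathcal{A}\ne\mathcal{B}$ at least one class $C\in\mathcal{B}$ must contain two distinct classes $A,B\in\mathcal{A}$ (otherwise every $\mathcal{B}$-class would equal a single $\mathcal{A}$-class and the partitions would coincide). I would then form $\mathcal{C}:=\mathcal{A}\cup\{A\cup B\}\setminus\{A,B\}$. By the preceding paragraph $\mathcal{A}<\mathcal{C}$, and $\mathcal{C}\le\mathcal{B}$ because each class of $\mathcal{C}$ is either a class of $\mathcal{A}$ (hence inside a class of $\mathcal{B}$) or the set $A\cup B\subseteq C$. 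The covering hypothesis forbids $\mathcal{A}<\mathcal{C}<\mathcal{B}$, so $\mathcal{C}=\mathcal{B}$, which is exactly the claimed form.

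The argument is short, and the counting observation makes the $(\Leftarrow)$ direction essentially automatic. I expect the only step needing care to be in $(\Rightarrow)$: producing an explicit pair of $\mathcal{A}$-classes sharing a single $\mathcal{B}$-class, and then arguing that merging just that one pair already recovers all of $\mathcal{B}$. The mild obstacle is stating cleanly that $\mathcal{A}\le\mathcal{B}$ together with $\mathcal{A}\ne\mathcal{B}$ forces the existence of such a pair, after which the covering hypothesis pins down $\mathcal{C}=\mathcal{B}$.
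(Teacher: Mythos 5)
Your proof is correct. Note, however, that the paper states this proposition without any proof at all — it is treated as a standard fact about the partition lattice — so there is no argument of the paper's to compare against; your write-up in effect supplies the missing proof. Both directions of your argument are sound: the class-counting observation ($\mathcal{A}<\mathcal{C}$ forces $|\mathcal{C}|<|\mathcal{A}|$) correctly rules out any intermediate partition in the ($\Leftarrow$) direction, and in the ($\Rightarrow$) direction the existence of two distinct $\mathcal{A}$-classes inside a single $\mathcal{B}$-class, followed by the squeeze $\mathcal{A}<\mathcal{C}\le\mathcal{B}$ and the covering hypothesis, pins down $\mathcal{C}=\mathcal{B}$. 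One small point worth making explicit: as literally stated, the proposition should require $A\ne B$ (otherwise $\mathcal{B}=\mathcal{A}$ and the covering relation fails); you silently and correctly adopt this reading by taking $A,B$ distinct.
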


\section{\label{sec:Synchrony-subspaces}Synchrony subspaces}

The \emph{characteristic vector} of a subset $C$ of $\{1,\ldots,n\}$
is the vector $(x_{1},\ldots,x_{n})$ such that $x_{i}=1$ for $i\in C$
and $x_{i}=0$ for $i\not\in C$. The \emph{characteristic matrix}
of a partition $\mathcal{A}=\{C_{1},\ldots,C_{k}\}\in\Pi(n)$ is the
matrix $P(\mathcal{A})$ whose columns are the characteristic vectors
of the classes of $\mathcal{A}$. The classes of $\mathcal{A}$ are
ordered lexicographically and the characteristic vectors are listed
in the same order. The \emph{coloring vector} of $\mathcal{A}$ is
$c(\mathcal{A}):=(c_{1},\ldots,c_{n})$ such that $i\in C_{c_{i}}$
for all $i\in V$. Note that $P(\mathcal{A})$ is the $n\times|\mathcal{A}|$
matrix with components $P(\mathcal{A})_{i,a}=\delta_{c_{i},a}$. We
use the identification $\mathbb{R}^{n}\equiv\mathbb{R}^{n\times1}$,
so the coloring vector $c(\mathcal{A}_{k})$ is also a column matrix.

The set of all coloring vectors is linearly ordered by the lexicographic
order. If $\mathcal{A\prec\mathcal{B}}$ then $c(\mathcal{A})<c(\mathcal{B})$
since one of the colors in the class that splits must increase. Hence
the coloring vector map is order preserving. That is, $\mathcal{A}\le\mathcal{B}$
implies $c(\mathcal{A})\le c(\mathcal{B})$. However the converse
is not true. For example $c(12|3)=(1,1,2)<(1,2,2)=c(1|23)$ even though
the partitions $12|3$ and $1|23$ are not comparable. We usually
order a list of partitions according to the lexicographical order
of their coloring vectors. Thus, the first partition in $\Pi(n)$
is the singleton partition $\mathcal{A}_{1}$ with coloring vector
$(1,1,1,\ldots,1)$ and the last partition is the discrete partition
$\mathcal{A}_{B_{n}}$ with coloring vector $(1,2,3,\ldots,n)$, where
$B_{n}$ is the $n$-th Bell number.

\begin{defn}
For $\mathcal{A}\in\Pi(n)$ we define
\[
\sys(\mathcal{A}):=\{(x_{1},\ldots,x_{n})\mid x_{i}=x_{j}\text{ if }[i]=[j]\}\subseteq\mathbb{R}^{n}
\]
to be the \emph{synchrony subspace} of $\mathcal{A}$.
\end{defn}

The connection between $\sys(\mathcal{A})$ and $\Delta_{\mathcal{A}}^{P}$
defined for coupled cell networks is discussed in Subsection~\ref{subsec:Balanced}.
The following is an easy consequence of the definitions.
\begin{prop}
The \emph{synchrony subspace} $\sys(\mathcal{A})$ of a partition
$\mathcal{A}$ is the column space $\Col(P(\mathcal{A}))$ of the
characteristic matrix $P(\mathcal{A})$.
\end{prop}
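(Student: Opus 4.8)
The plan is to prove the two inclusions $\Col(P(\mathcal{A}))\subseteq\sys(\mathcal{A})$ and $\sys(\mathcal{A})\subseteq\Col(P(\mathcal{A}))$ separately, with everything flowing from the component formula for a product $P(\mathcal{A})y$. Writing $k=|\mathcal{A}|$, the column space $\Col(P(\mathcal{A}))$ consists of all vectors $x=P(\mathcal{A})y$ with $y=(y_{1},\ldots,y_{k})\in\R^{k}$. Using the entry formula $P(\mathcal{A})_{i,a}=\delta_{c_{i},a}$ from the definition of the characteristic matrix, the $i$-th component of such an $x$ is
\[
x_{i}=\sum_{a=1}^{k}P(\mathcal{A})_{i,a}\,y_{a}=\sum_{a=1}^{k}\delta_{c_{i},a}\,y_{a}=y_{c_{i}}.
\]
The key observation is thus that $x_{i}$ depends on $i$ only through its color $c_{i}$, that is, only through its class.

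For the forward inclusion I would take any $x\in\Col(P(\mathcal{A}))$, write $x=P(\mathcal{A})y$, and invoke the formula above: if $[i]=[j]$ then $c_{i}=c_{j}$, hence $x_{i}=y_{c_{i}}=y_{c_{j}}=x_{j}$. This is exactly the defining condition of $\sys(\mathcal{A})$, so $x\in\sys(\mathcal{A})$.

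For the reverse inclusion, given $x\in\sys(\mathcal{A})$, I would build the coefficient vector $y$ directly from $x$. Because $x_{i}=x_{j}$ whenever $[i]=[j]$, the vector $x$ is constant on each class, so setting $y_{a}$ equal to the common value of the components $x_{i}$ for $i\in C_{a}$ is well defined. Then $y_{c_{i}}=x_{i}$ for every $i$, and the component formula gives $P(\mathcal{A})y=x$, so $x\in\Col(P(\mathcal{A}))$. Combining the two inclusions yields the claim.

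There is no genuine obstacle here; as the text already notes, the statement is an immediate consequence of the definitions. The only point that warrants a word of care is the well-definedness of $y_{a}$ in the reverse direction, which is precisely what membership in $\sys(\mathcal{A})$ guarantees.
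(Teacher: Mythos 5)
Your proof is correct: the component formula $(P(\mathcal{A})y)_{i}=y_{c_{i}}$ gives both inclusions exactly as you argue, and the well-definedness point in the reverse direction is handled properly. The paper states this proposition without proof (as an easy consequence of the definitions), and your argument is precisely that intended unpacking of the definitions, so there is nothing to compare beyond noting that you have supplied the omitted details faithfully.
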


\begin{example}
Consider the partitions $\mathcal{A}=12|3$ and $\mathcal{B}=1|23$.
The characteristic matrices are
\[
P(\mathcal{A})=\left[\begin{smallmatrix}1 & 0\\
1 & 0\\
0 & 1
\end{smallmatrix}\right],\qquad P(\mathcal{B})=\left[\begin{smallmatrix}1 & 0\\
0 & 1\\
0 & 1
\end{smallmatrix}\right],
\]
the coloring vectors are $c(\mathcal{A})=(1,1,2)$ and $c(\mathcal{B})=(1,2,2)$
while the synchrony subspaces are $\sys(\mathcal{A})=\{(a,a,b)\mid a,b\in\mathbb{R}\}$
and $\sys(\mathcal{B})=\{(a,b,b)\mid a,b\in\mathbb{R}\}$.
\end{example}

The following is an easy consequence of the definitions. See for example
\cite[Equation (4)]{ZhangMingKanat}.
\begin{prop}
The mapping $\sys:\Pi(n)\to L(\mathbb{R}^{n})$ is an order embedding.
That is, it is injective and $\mathcal{A}\le\mathcal{B}$ if and only
if $\sys(\mathcal{A})\supseteq\sys(\mathcal{B})$.
\end{prop}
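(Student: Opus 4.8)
The plan is to establish the biconditional $\mathcal{A}\le\mathcal{B}\iff\sys(\mathcal{A})\supseteq\sys(\mathcal{B})$ directly from the definitions, and then deduce injectivity as a formal consequence. Recall that the order on $L(\mathbb{R}^{n})$ is reversed containment, so the assertion ``$\sys(\mathcal{A})\le\sys(\mathcal{B})$ in $L(\mathbb{R}^{n})$'' is precisely ``$\sys(\mathcal{A})\supseteq\sys(\mathcal{B})$ as subspaces''. Thus proving the displayed equivalence is exactly the statement that the map both preserves and reflects the order.

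First I would prove the forward implication. Assume $\mathcal{A}\le\mathcal{B}$, so that by the refinement characterization $[i]_{\mathcal{A}}=[j]_{\mathcal{A}}$ implies $[i]_{\mathcal{B}}=[j]_{\mathcal{B}}$. Given any $x\in\sys(\mathcal{B})$ and any indices $i,j$ with $[i]_{\mathcal{A}}=[j]_{\mathcal{A}}$, the refinement condition forces $[i]_{\mathcal{B}}=[j]_{\mathcal{B}}$, whence $x_{i}=x_{j}$ by membership in $\sys(\mathcal{B})$. Since $i,j$ were arbitrary, $x$ satisfies the defining equations of $\sys(\mathcal{A})$, so $x\in\sys(\mathcal{A})$ and therefore $\sys(\mathcal{B})\subseteq\sys(\mathcal{A})$.

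For the reverse implication I would argue by contraposition. Suppose $\mathcal{A}\not\le\mathcal{B}$, so there exist indices $i,j$ with $[i]_{\mathcal{A}}=[j]_{\mathcal{A}}$ but $[i]_{\mathcal{B}}\ne[j]_{\mathcal{B}}$. To separate the subspaces I would produce an explicit witness: let $x$ be the characteristic vector of the $\mathcal{B}$-class $[i]_{\mathcal{B}}$, which lies in $\sys(\mathcal{B})=\Col(P(\mathcal{B}))$ because it is a column of $P(\mathcal{B})$ by the preceding proposition. Then $x_{i}=1$ while $x_{j}=0$, so $x$ violates the defining equation $x_{i}=x_{j}$ of $\sys(\mathcal{A})$; hence $x\in\sys(\mathcal{B})\setminus\sys(\mathcal{A})$ and $\sys(\mathcal{B})\not\subseteq\sys(\mathcal{A})$. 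This is the only step that requires more than unwinding definitions, and the main (mild) obstacle is choosing the right separating vector, for which the characteristic vectors of the $\mathcal{B}$-classes are the natural candidates.

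Finally, injectivity is automatic from the biconditional together with antisymmetry of the partition order: if $\sys(\mathcal{A})=\sys(\mathcal{B})$, then both inclusions hold, so $\mathcal{A}\le\mathcal{B}$ and $\mathcal{B}\le\mathcal{A}$, forcing $\mathcal{A}=\mathcal{B}$.
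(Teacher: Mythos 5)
Your proof is correct, and it fills in precisely the routine definition-unwinding that the paper itself omits (the paper states this proposition as ``an easy consequence of the definitions'' with a citation, giving no argument). Both directions and the deduction of injectivity from antisymmetry are sound, and your use of a characteristic vector of a $\mathcal{B}$-class as the separating witness is the natural choice.
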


We recall another relevant result.
\begin{prop}
\label{prop:vee}\cite[Lemma 2]{ZhangCamlibelCao} If $S\subseteq\Pi(n)$
then $\sys(\bigvee S)=\bigcap\{\sys(\mathcal{A})\mid\mathcal{A}\in S\}$.
\end{prop}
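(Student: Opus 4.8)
The plan is to establish the identity by proving the two inclusions separately, writing $\mathcal{C} := \bigvee S$ for brevity. Note that $\mathcal{C}$ exists because $\Pi(n)$ is finite, hence a complete lattice. The forward inclusion will follow formally from the order-embedding property of $\sys$ proved above, while the reverse inclusion will require the combinatorial description of the join in $\Pi(n)$ as a transitive closure.

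For $\sys(\mathcal{C}) \subseteq \bigcap\{\sys(\mathcal{A}) \mid \mathcal{A}\in S\}$, I would note that $\mathcal{A} \le \mathcal{C}$ for every $\mathcal{A}\in S$, since $\mathcal{C}$ is an upper bound of $S$. The order-embedding property then gives $\sys(\mathcal{C}) \subseteq \sys(\mathcal{A})$ for each such $\mathcal{A}$, and intersecting over $\mathcal{A}\in S$ yields the claim.

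For the reverse inclusion I would use the fact that $[i]_{\mathcal{C}} = [j]_{\mathcal{C}}$ holds precisely when $i$ and $j$ are joined by a finite chain $i = k_0, k_1, \dots, k_m = j$ in which each consecutive pair $k_\ell, k_{\ell+1}$ shares a class of some $\mathcal{A}_\ell \in S$; this is the standard description of the supremum of a family of equivalence relations as the equivalence relation generated by their union. Given $x \in \bigcap\{\sys(\mathcal{A}) \mid \mathcal{A}\in S\}$ and a pair with $[i]_{\mathcal{C}} = [j]_{\mathcal{C}}$, membership of $x$ in each $\sys(\mathcal{A}_\ell)$ forces $x_{k_\ell} = x_{k_{\ell+1}}$ along such a chain, and transitivity of equality gives $x_i = x_j$. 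Hence $x \in \sys(\mathcal{C})$.

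The one step needing care is this transitive-closure characterization of $\bigvee S$: one must check that the relation generated by the union of the relations in $S$ is genuinely their least upper bound in $\Pi(n)$, i.e., that it is an upper bound of $S$ and is finer than every other upper bound. A tidy way to avoid invoking it explicitly is to observe that $\bigcap\{\sys(\mathcal{A}) \mid \mathcal{A}\in S\}$ is cut out by the linear constraints $x_i = x_j$ over all pairs $i,j$ sharing a class in some $\mathcal{A}\in S$, so it equals $\sys(\mathcal{D})$ for the partition $\mathcal{D}$ whose classes are the resulting connected components; injectivity of $\sys$ together with the forward inclusion then identifies $\mathcal{D}$ with $\mathcal{C}$.
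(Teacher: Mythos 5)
Your proof is correct. One point of comparison worth noting: the paper does not prove this proposition at all --- it is recalled from the literature (cited as Lemma 2 of the Zhang--Camlibel--Cao reference), so there is no internal proof to measure your argument against. Your argument is the natural self-contained one: the forward inclusion follows formally from the order-embedding property of $\sys$, and the reverse inclusion uses the standard description of $\bigvee S$ as the equivalence relation generated by the union of the relations in $S$, i.e.\ the chain/transitive-closure characterization, which is exactly the combinatorial fact underlying the cited lemma. Your closing remark that one can avoid invoking that characterization --- by noting that $\bigcap\{\sys(\mathcal{A})\mid\mathcal{A}\in S\}$ is itself a synchrony subspace $\sys(\mathcal{D})$ and then pinning down $\mathcal{D}$ --- is also sound, though as stated it is slightly compressed: injectivity of $\sys$ plus the forward inclusion alone do not identify $\mathcal{D}$ with $\mathcal{C}$; you also need that $\mathcal{D}$ is an upper bound of $S$ (which follows from $\sys(\mathcal{D})\subseteq\sys(\mathcal{A})$ and the order embedding), whence $\mathcal{C}\le\mathcal{D}\le\mathcal{C}$ and antisymmetry finishes. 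Either route is acceptable; the chain argument you gave in full is complete as written.
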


\section{Invariant synchrony subspaces}

A subspace $U$ of $\mathbb{R}^{n}$ is \emph{invariant under a matrix}
$M$ in $\mathbb{R}^{n\times n}$ if $MU=\{Mu\mid u\in U\}$ is a
subset of $U$. The subspace $U$ is \emph{invariant under a collection}
$\mathcal{M}$ of matrices if $U$ is invariant under each $M$ in
$\mathcal{M}$.
\begin{defn}
Let $\mathcal{M}\subseteq\mathbb{R}^{n\times n}$. A partition $\mathcal{A}$
in $\Pi(n)$ is called $\mathcal{M}$-\emph{invariant} if the synchrony
subspace of $\mathcal{A}$ is $M$-invariant for all $M\in\mathcal{M}$.
That is, $M\sys(\mathcal{A})\subseteq\sys(\mathcal{A})$ for all $M\in\mathcal{M}$.
We let $\Pi_{\mathcal{M}}(n)$ be the set of $\mathcal{M}$-invariant
partitions. We use the simplified notation $\Pi_{M}(n)$ if $\mathcal{M}=\{M\}$.
\end{defn}

Note that $\Pi_{\mathcal{M}}(n)$ is a subposet of $\Pi(n)$.
\begin{example}
Let 
\[
M_{1}=\left[\begin{smallmatrix}1 & 0 & 0\\
0 & 1 & 0\\
0 & 0 & 2
\end{smallmatrix}\right],\quad M_{2}=\left[\begin{smallmatrix}1 & 0 & -1\\
0 & 2 & 0\\
0 & 0 & 0
\end{smallmatrix}\right]
\]
and $\mathcal{M}=\{M_{1},M_{2}\}$. Then
\[
\begin{aligned}\Pi_{M_{1}}(3) & =\{12|3,1|2|3\},\\
\Pi_{M_{2}}(3) & =\{13|2,1|2|3\},\\
\Pi_{\mathcal{M}}(3) & =\{1|2|3\}.
\end{aligned}
\]
For example 
\[
M_{2}\sys(13|2)=\{M_{2}(a,b,a)\mid a,b\in\mathbb{R}\}=\{(0,2b,0)\mid a,b\in\mathbb{R}\}\subseteq\sys(13|2).
\]
On the other hand
\[
M_{2}\sys(12|3)=\{M_{2}(a,a,b)\mid a,b\in\mathbb{R}\}=\{(a-b,2a,0)\mid a,b\in\mathbb{R}\}\not\subseteq\sys(12|3).
\]
\end{example}

\begin{prop}
\label{prop:supInvariant}If $\emptyset\ne S\subseteq\Pi_{\mathcal{M}}$
then $\bigvee S\in\Pi_{\mathcal{M}}$.
\end{prop}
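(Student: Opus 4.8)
The plan is to reduce the statement to the elementary fact that an intersection of $M$-invariant subspaces is again $M$-invariant, using Proposition~\ref{prop:vee} to identify $\sys(\bigvee S)$ with such an intersection. First I would observe that $\bigvee S$ is well defined and is being computed in the full lattice $\Pi(n)$: since $\Pi(n)$ is finite it is a complete lattice, so the join of the nonempty set $S$ exists there. The content of the proposition is precisely that this join, formed in $\Pi(n)$, in fact lands in the subposet $\Pi_{\mathcal{M}}(n)$; this is the step that will eventually let one promote $\Pi_{\mathcal{M}}(n)$ to a lattice in its own right.

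Next I would apply Proposition~\ref{prop:vee} to write
\[
\sys(\bigvee S)=\bigcap\{\sys(\mathcal{A})\mid\mathcal{A}\in S\}.
\]
Fix an arbitrary $M\in\mathcal{M}$. Because every $\mathcal{A}\in S$ is $\mathcal{M}$-invariant, each subspace $\sys(\mathcal{A})$ satisfies $M\sys(\mathcal{A})\subseteq\sys(\mathcal{A})$. The core step is then the standard intersection argument: take any $x\in\sys(\bigvee S)$, so by the displayed identity $x\in\sys(\mathcal{A})$ for every $\mathcal{A}\in S$; by $M$-invariance $Mx\in\sys(\mathcal{A})$ for every such $\mathcal{A}$, and hence $Mx$ lies in the intersection $\bigcap\{\sys(\mathcal{A})\mid\mathcal{A}\in S\}=\sys(\bigvee S)$. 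This shows $M\sys(\bigvee S)\subseteq\sys(\bigvee S)$. Since $M\in\mathcal{M}$ was arbitrary, $\sys(\bigvee S)$ is invariant under every matrix in $\mathcal{M}$, so by definition $\bigvee S\in\Pi_{\mathcal{M}}(n)$.

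There is no genuine obstacle here; the only point requiring a little care is to invoke Proposition~\ref{prop:vee} for an arbitrary subset $S$, rather than for a pair, so that the meet appearing there really is the full intersection over all of $S$. Once $\sys(\bigvee S)$ has been expressed as this intersection, the invariance conclusion is immediate from the set-theoretic observation that membership in \emph{every} $\sys(\mathcal{A})$ is preserved by each $M\in\mathcal{M}$.
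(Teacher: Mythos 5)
Your proof is correct and follows exactly the paper's argument: apply Proposition~\ref{prop:vee} to write $\sys(\bigvee S)$ as the intersection $\bigcap\{\sys(\mathcal{A})\mid\mathcal{A}\in S\}$, then use the fact that an intersection of $\mathcal{M}$-invariant subspaces is $\mathcal{M}$-invariant. You simply spell out the elementary intersection step that the paper leaves implicit.
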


\begin{proof}
Since $\sys(\mathcal{A})$ is $\mathcal{M}$-invariant for all $\mathcal{A}\in S$,
$\sys(\bigvee S)=\bigcap\{\sys(\mathcal{A})\mid\mathcal{A}\in S\}$
is also $\mathcal{M}$-invariant.
\end{proof}
Recall the following result.
\begin{prop}
\label{prop:latticeCondition}\cite[Lemma 34]{Gratzer} If $P$ is
a poset in which $\bigvee S$ exists for all $S\subseteq P$, then
$P$ is a complete lattice.
\end{prop}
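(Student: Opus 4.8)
The plan is to observe that the hypothesis already supplies every join, so the only thing left to establish is the existence of arbitrary meets; once that is done, $P$ is a complete lattice directly from the definition. The key idea is the standard device of manufacturing a meet out of joins by taking the supremum of the set of all lower bounds.

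Concretely, I would fix an arbitrary $S\subseteq P$ and let $L:=\{x\in P\mid x\le s\text{ for all }s\in S\}$ be the set of lower bounds of $S$. Since $L$ is itself a subset of $P$, the hypothesis guarantees that $m:=\bigvee L$ exists. I claim that $m=\bigwedge S$, and I would verify this in two steps.

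First I would check that $m$ is a lower bound of $S$, i.e. that $m\in L$. Each $s\in S$ is, by the definition of $L$, an upper bound of $L$; since $m$ is the \emph{least} upper bound of $L$, it follows that $m\le s$ for every $s\in S$, so indeed $m\in L$. Next, $m=\bigvee L$ is in particular an upper bound of $L$, so $x\le m$ for all $x\in L$. Hence $m$ is the greatest element of $L$, and because $L$ is precisely the set of lower bounds of $S$, this says exactly that $m$ is the greatest lower bound of $S$, that is $m=\bigwedge S$. As $S$ was arbitrary, every meet exists, and together with the hypothesis on joins this makes $P$ a complete lattice.

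The argument is short, so I expect the only real (and minor) obstacle to be the degenerate cases. One must confirm that $L$ is never empty so that $\bigvee L$ is genuinely a supremum of available elements: applying the hypothesis to $S=\emptyset$ yields a bottom element $\bot=\bigvee\emptyset$, which is a lower bound of every $S$, whence $\bot\in L$ and $L\ne\emptyset$. With this in place, the empty and full cases of $S$ drop out as instances of the general argument rather than requiring separate treatment.
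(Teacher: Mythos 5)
Your proof is correct and follows exactly the route the paper indicates: the paper cites Gr\"atzer for this result and then remarks that the bottom element is $\bigvee\emptyset$ and that $\bigwedge S=\bigvee L_{S}$ with $L_{S}$ the set of lower bounds of $S$, which is precisely your construction of meets from joins. Your verification that $\bigvee L_{S}$ is both a lower bound of $S$ and above every lower bound, together with the observation that $\bigvee\emptyset$ handles the degenerate cases, fills in the details correctly.
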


Note that the bottom element of $P$ is $\bigvee\emptyset$, and $\bigwedge S=\bigvee L_{S}$
where $L_{S}$ is the set of lower bounds of $S$. The following is
closely related to \cite[Section 4]{Aguiar&Dias}.
\begin{prop}
If $\mathcal{M}\subseteq\mathbb{R}^{n\times n}$ then $\Pi{}_{\mathcal{M}}(n)$
is a lattice.
\end{prop}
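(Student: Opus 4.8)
The plan is to apply Proposition~\ref{prop:latticeCondition} to the poset $P=\Pi_{\mathcal{M}}(n)$. That proposition states that if every subset $S\subseteq P$ has a join $\bigvee S$ inside $P$, then $P$ is automatically a complete lattice. So the entire task reduces to checking the hypothesis: that $\Pi_{\mathcal{M}}(n)$ is closed under arbitrary joins.

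**First I would** handle the nonempty case, which is exactly Proposition~\ref{prop:supInvariant}: for $\emptyset\ne S\subseteq\Pi_{\mathcal{M}}(n)$, the join $\bigvee S$ computed in the ambient lattice $\Pi(n)$ already lies in $\Pi_{\mathcal{M}}(n)$. The key point there is Proposition~\ref{prop:vee}, which identifies $\sys(\bigvee S)$ with the intersection $\bigcap\{\sys(\mathcal{A})\mid\mathcal{A}\in S\}$, together with the elementary fact that an intersection of $\mathcal{M}$-invariant subspaces is again $\mathcal{M}$-invariant. One subtlety worth flagging: Proposition~\ref{prop:latticeCondition} asks for joins of \emph{all} subsets, whereas $\Pi(n)$ is only asserted to be a lattice in the finite sense. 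This is harmless because $\Pi(n)$ is a finite poset (it has $B_n$ elements), so every subset is finite and every join already exists in $\Pi(n)$; thus the join I feed into Proposition~\ref{prop:supInvariant} is genuinely the ambient join.

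**Next I would** treat the empty set $S=\emptyset$ separately, since Proposition~\ref{prop:supInvariant} explicitly excludes it. The convention is that $\bigvee\emptyset$ is the bottom element of the poset, and I must verify that a bottom element of $\Pi_{\mathcal{M}}(n)$ actually exists and is $\mathcal{M}$-invariant. The natural candidate is the singleton partition $\mathcal{A}_1=12\cdots n$, whose synchrony subspace $\sys(\mathcal{A}_1)$ is the one-dimensional diagonal $\{(a,a,\ldots,a)\mid a\in\R\}$. Since this is the \emph{smallest} partition under $\le$ and $\sys$ is an order embedding reversing containment, $\sys(\mathcal{A}_1)$ is the largest synchrony subspace, so if it is $\mathcal{M}$-invariant it serves as $\bigvee\emptyset$. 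The mild obstacle here is that the diagonal need \emph{not} be invariant for arbitrary $M$ (a matrix whose row sums differ maps the diagonal off itself), so I cannot simply assert $\mathcal{A}_1\in\Pi_{\mathcal{M}}(n)$.

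**The cleanest resolution**, which I expect to be the main (though minor) obstacle, is to avoid claiming $\mathcal{A}_1$ is always invariant. Instead I observe that $\bigvee\emptyset$ in the poset $P$ means the join of the empty family \emph{within $P$}, which by definition is the supremum of the empty set and hence the least element of $P$; its existence is precisely what Proposition~\ref{prop:latticeCondition} guarantees once all nonempty joins exist. Indeed, the remark following Proposition~\ref{prop:latticeCondition} already records that ``the bottom element of $P$ is $\bigvee\emptyset$.'' Therefore the rigorous route is simply to note that the nonempty-join closure from Proposition~\ref{prop:supInvariant} supplies all the hypotheses of Proposition~\ref{prop:latticeCondition} as stated (that proposition's phrasing ``for all $S\subseteq P$'' is understood so that $\bigvee\emptyset$ is whatever bottom the closure produces), and conclude directly that $\Pi_{\mathcal{M}}(n)$ is a complete lattice, hence a lattice. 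I would write the proof as a one-line invocation: by Proposition~\ref{prop:supInvariant} every nonempty subset of $\Pi_{\mathcal{M}}(n)$ has a join in $\Pi_{\mathcal{M}}(n)$, so by Proposition~\ref{prop:latticeCondition} $\Pi_{\mathcal{M}}(n)$ is a complete lattice, and in particular a lattice.
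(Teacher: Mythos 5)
Your handling of nonempty joins is exactly the paper's: Proposition~\ref{prop:supInvariant} feeding into Proposition~\ref{prop:latticeCondition}. But there is a genuine gap in your treatment of $S=\emptyset$, and it originates in a mix-up of the order on $\Pi(n)$. The bottom element of $\Pi(n)$ is not the singleton partition $12\cdots n$ but the \emph{discrete} partition $1|2|\cdots|n$: in the paper's convention $\mathcal{A}\le\mathcal{B}$ means $\mathcal{A}$ is finer, and $\sys$ reverses containment, so the least partition has the \emph{largest} synchrony subspace. Since $\sys(1|2|\cdots|n)=\R^{n}$ is invariant under every matrix, the discrete partition always lies in $\Pi_{\mathcal{M}}(n)$ and is its bottom element; that single sentence is how the paper disposes of $\bigvee\emptyset$. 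The ``obstacle'' you worried about (the diagonal not being invariant) concerns the \emph{top} element, which is irrelevant here and indeed need not belong to $\Pi_{\mathcal{M}}(n)$ at all (see Figure~\ref{fig:balEx2}, where the coarsest invariant partition is $1|234$, not the singleton partition).

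The more serious problem is your attempted workaround: you assert that once all nonempty joins exist, the existence of $\bigvee\emptyset$ ``is precisely what Proposition~\ref{prop:latticeCondition} guarantees.'' That is circular and, as a mathematical claim, false: the hypothesis of that proposition quantifies over \emph{all} subsets, including the empty one, so a least element must be exhibited, not deduced. Closure under nonempty joins does not by itself yield a lattice, even for finite posets: take a two-element antichain $\{a,b\}$ together with a top element $\top$. Every nonempty subset of $\{a,b,\top\}$ has a join, yet $a\wedge b$ does not exist because $a$ and $b$ have no common lower bound, so this poset is not a lattice. Your one-line invocation therefore does not go through as written; it becomes correct the moment you add the observation that the discrete partition is $\mathcal{M}$-invariant for any $\mathcal{M}$ and hence serves as the required bottom element.
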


\begin{proof}
We verify the conditions of Proposition~\ref{prop:latticeCondition}.
The discrete partition is invariant under any $\mathcal{M}$, so it
is the bottom element of $\Pi_{\mathcal{M}}(n)$. Let $\emptyset\ne S\subseteq\Pi_{\mathcal{M}}(n)$
and $\bigvee S$ be the supremum of $S$ taken in the lattice $\Pi(n)$.
Then $\bigvee S\in\Pi_{\mathcal{M}}(n)$ by Proposition~\ref{prop:supInvariant}.
So the supremum of $S$ in $\Pi_{\mathcal{M}}(n)$ is $\bigvee S$.
\end{proof}
\begin{figure}
\begin{tabular}{cc}
$\left[\begin{smallmatrix}1 & 1 & 0 & 0 & 0\\
1 & 1 & 0 & 0 & 0\\
1 & 1 & 0 & 0 & 0\\
0 & 0 & 1 & 1 & 0\\
0 & 0 & 0 & 1 & 1
\end{smallmatrix}\right]$ & \tabularnewline
 & \tabularnewline
\end{tabular}%
\begin{tabular}{lll}
$\mathcal{A}_{1}=12345$ & $\mathcal{A}_{7}=13|2|4|5$ & \tabularnewline
$\mathcal{A}_{2}=1234|5$ & $\mathcal{A}_{8}=14|235$ & \tabularnewline
$\mathcal{A}_{3}=123|4|5$ & $\mathcal{A}_{9}=14|23|5$ & \tabularnewline
$\mathcal{A}_{4}=12|3|4|5$ & $\mathcal{A}_{10}=1|23|4|5$ & \tabularnewline
$\mathcal{A}_{5}=135|24$ & $\mathcal{A}_{11}=1|2|3|4|5$ & \tabularnewline
$\mathcal{A}_{6}=13|24|5$ &  & \tabularnewline
\end{tabular}%
\begin{tabular}{c}
\includegraphics{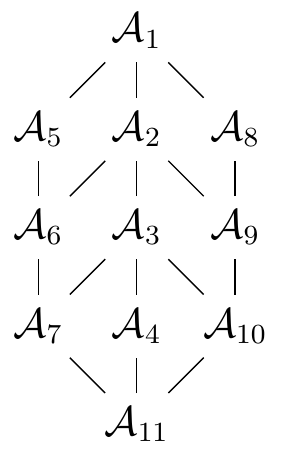}\tabularnewline
\end{tabular}

\caption{\label{fig:sublattice}A matrix $M$ and the lattice of $M$-invariant
partitions.}
\end{figure}

\begin{figure}
\begin{tabular}{cccc}
\begin{tabular}{c}
\includegraphics{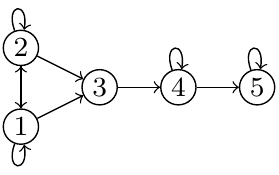}\tabularnewline
\end{tabular} & %
\begin{tabular}{c}
\includegraphics{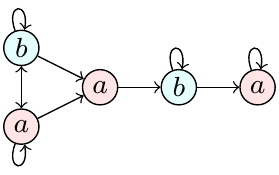}\tabularnewline
\end{tabular} & %
\begin{tabular}{c}
\includegraphics{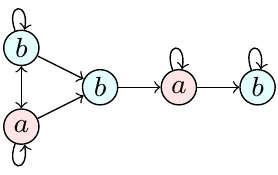}\tabularnewline
\end{tabular} & %
\begin{tabular}{c}
\includegraphics{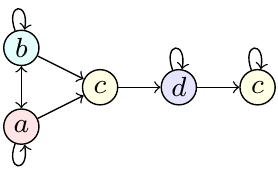}\tabularnewline
\end{tabular}\tabularnewline
 & $\mathcal{A}_{5}$ & $\mathcal{A}_{8}$ & $\mathcal{A}_{5}\wedge\mathcal{A}_{8}$\tabularnewline
 & balanced & balanced & not balanced\tabularnewline
\end{tabular}

\caption{\label{fig:infNotBalanced}Balanced partitions whose infimum in $\Pi$
is not balanced.}
\end{figure}

\begin{example}
\label{exa:sublattice}Figure~\ref{fig:sublattice} shows a matrix
$M$ and the lattice of $M$-invariant partitions. Partitions $\mathcal{A}_{5}=135|24$
and $\mathcal{A}_{8}=14|235$ are $M$-invariant but $\mathcal{A}_{5}\wedge\mathcal{A}_{8}=1|2|35|4$
is not $M$-invariant. The infimum of $\mathcal{A}_{5}$ and $\mathcal{A}_{8}$
in $\Pi_{M}$ is the discrete partition $\mathcal{A}_{11}$. This
shows that $\Pi_{\mathcal{M}}$ is not a sublattice of $\Pi$. Although
the $\vee$ operation is the same in $\Pi$ and $\Pi_{\mathcal{M}}$,
the $\wedge$ operations are not the same. See \cite[page 951]{Aguiar&Dias}
and \cite{StewartLattice}. Note that 
\[
\sys(\mathcal{A}_{5})+\sys(\mathcal{A}_{8})=\{x\in\R^{5}\mid x_{3}=x_{5},\,x_{1}+x_{2}=x_{3}+x_{4}\}
\]
 is not a synchrony subspace.
\end{example}

\section{Applications}

We present several areas of mathematics where invariant synchrony
subspaces are used. We introduce the concepts of equitable and almost
equitable partitions of graphs. We also talk about balanced and exo-balanced
partitions. These are essentially generalizations for colored digraphs.
The terminology is still evolving and not completely consistent but
generally equitable is used in graph theory and balanced is used for
network theory. All of these concepts can be treated as $\mathcal{M}$-invariant
partitions for an appropriate $\mathcal{M}$.

\subsection{Equitable partitions of graphs}

Equitable partitions play an important role in graph theory \cite{Godsil,GodsilSurvey,McKayMAthesis}.
They are used for example in algorithms to test if two graphs are
isomorphic \cite{nauty}. Orbits of group actions on graphs form equitable
partitions but not all equitable partitions are of this type. Equitable
partitions can be used to create quotient digraphs.

If $i$ is a vertex of a digraph, then the \emph{neighborhood} of
$i$ is denoted by $N(i)$. The neighborhood of $i$ consists of the
vertices adjacent to $i$. Let $\mathcal{A}$ be a partition of the
vertex set of a digraph. The \emph{degree} of a vertex $i$ \emph{relative
to the class} $B$ is $d_{B}(i):=|N(i)\cap B|$.
\begin{defn}
\label{def:equitable}A partition $\mathcal{A}$ of the vertex set
of a graph is called \emph{equitable} if $d_{B}(i)=d_{B}(j)$ for
all $i,j\in A\in\mathcal{A}$ and $B\in\mathcal{A}$.
\end{defn}

\begin{example}
Every partition of the vertex set of the complete graph $K_{n}$ is
equitable since $d_{A}(i)=|N(i)\cap A|=|A|-$1 for all $i\in A$ and
$d_{B}(i)=|N(i)\cap B|=|B|$ for all $i\in A\ne B$. So the number
of equitable partitions of $K_{n}$ is the Bell number $B_{n}$, which
is defined to be the number or partitions of a set with $n$ elements.
\end{example}

\begin{example}
If $A$ is the adjacency matrix of the graph with vertex set $\{1,\ldots,n\}$,
then $\Pi_{A}(n)$ consists of the equitable partitions of the graph
\cite[Lemma 2.1]{Godsil}.
\end{example}

We are going to refer to a directed graph with possible multiple edges
and loops as a \emph{digraph}.
\begin{example}
\begin{figure}
\begin{tabular}{ccccc}
\begin{tabular}{c}
\includegraphics{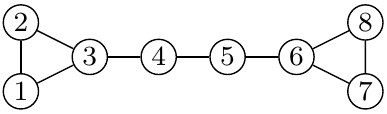}\tabularnewline
\end{tabular} & $\quad$ & %
\begin{tabular}{c}
\includegraphics{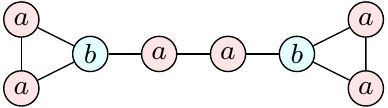}\tabularnewline
\end{tabular} & $\quad$ & %
\begin{tabular}{c}
\includegraphics{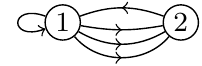}\tabularnewline
\end{tabular}\tabularnewline
$G$ &  & $\mathcal{A}_{1}=124578|36$ &  & $G/\mathcal{A}_{1}$\tabularnewline
(i) &  & (ii) &  & \multicolumn{1}{c}{(iii)}\tabularnewline
\end{tabular}

\caption{\label{fig:equitable}(i) Graph $G$. (ii) The coarsest equitable
partition $\mathcal{A}_{1}$ of $G$. (iii) The quotient digraph $G/\mathcal{A}_{1}$.}
\end{figure}
Figure~\ref{fig:equitable}(ii) shows the coarsest of the eight equitable
partitions of the graph $G$ shown in Figure~\ref{fig:equitable}(i).
Figure~\ref{fig:equitable}(iii) shows the quotient digraph by $\mathcal{A}_{1}$.
It is obvious that all orbit partitions under the graph automorphism
group are equitable, but the converse is not true. The equitable partition
of Figure~\ref{fig:equitable}(ii) is not an orbit partition. The
coarsest orbit partition of $G$ is $1278|36|45$. The full list of
equitable partitions for this so-called McKay graph \cite[Fig. 5.1]{McKayMAthesis}
are listed at the companion web site \cite{invariantWEB}.
\end{example}

Note that the coarsest orbit partition of a graph $G$ is always the
orbit partition for the natural action of $\Aut(G)$. Furthermore,
if $\Aut(G)$ has a vertex with trivial stabilizer subgroup, then
the group orbit of that vertex has the same size as that of $\Aut(G)$,
and every subgroup of $\Aut(G)$ has a distinct orbit partition. For
the graph in Figure~\ref{fig:equitable} there is no vertex with
trivial stabilizer subgroup, but such a vertex exists for the graphs
in the next example.
\begin{example}
\begin{figure}
\begin{tabular}{ccccc}
\begin{tabular}{c}
\includegraphics{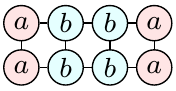}\tabularnewline
\end{tabular} & $\quad$ & %
\begin{tabular}{c}
\includegraphics{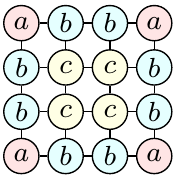}\tabularnewline
\end{tabular} & $\quad$ & %
\begin{tabular}{c}
\includegraphics{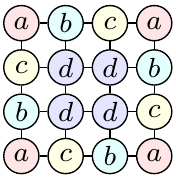}\tabularnewline
\end{tabular}\tabularnewline
(i) &  & (ii) &  & \multicolumn{1}{c}{(iii)}\tabularnewline
\end{tabular}

\caption{\label{fig:equitable-mxn}Equitable partitions. (i) The coarsest equitable
partition of $P_{4}\oblong P_{2}$. (ii) The coarsest equitable partition
of $P_{4}\oblong P_{4}$ . (iii) The orbit partition of the $\mathbb{Z}_{4}$
action on \textcolor{blue}{${\normalcolor P_{4}\oblong P_{4}}$.}}
\end{figure}
Figure~\ref{fig:equitable-mxn} shows a few equitable partitions
of the $m\times n$ square grid graphs $P_{m}\oblong P_{n}$. We have
computed the equitable partitions up to $m=n=20$, along with some
larger graphs. The companion web site \cite{invariantWEB} shows the
equitable partitions for several sizes. Unlike the graph in Figure~\ref{fig:equitable},
it appears that all of the equitable partitions of $P_{m}\square P_{n}$
are orbit partitions. 
\end{example}

\begin{conjecture}
\label{conj:PmPn}The equitable partitions of $P_{m}\oblong P_{n}$
are the orbit partitions of natural actions of the subgroups of the
automorphism group of the graph. The lattice of equitable partitions
is isomorphic to the lattice of subgroups exactly when there is a
vertex with trivial stabilizer subgroup. This happens unless $2\le m=n\le3$.
\end{conjecture}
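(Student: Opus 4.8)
The plan is to split the conjecture into three parts: (c) the arithmetic characterization of when a vertex with trivial stabilizer exists, (b) the lattice isomorphism statement \emph{granted} part (a), and (a) the core assertion that every equitable partition is an orbit partition. Parts (c) and (b) are within reach by elementary means, and I would do them first; part (a) is the genuine obstacle and is presumably why the statement is offered as a conjecture.

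First I would pin down $\Aut(P_{m}\oblong P_{n})$. For $m\ne n$ the two path factors are non-isomorphic and relatively prime, so the automorphism group is $\Aut(P_{m})\times\Aut(P_{n})\cong\mathbb{Z}_{2}\times\mathbb{Z}_{2}$; for $m=n$ there is in addition the coordinate swap, giving the wreath product $\mathbb{Z}_{2}\wr\mathbb{Z}_{2}\cong D_{4}$ of order $8$. Writing vertices as pairs $(i,j)\in\{1,\dots,m\}\times\{1,\dots,n\}$, each nontrivial automorphism fixes a line: the row axis $2i=m+1$, the column axis $2j=n+1$, and (when $m=n$) the diagonal $i=j$ and antidiagonal $i+j=n+1$, while the rotations fix only the center. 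A vertex has trivial stabilizer exactly when it lies on none of these lines. For $m\ne n$ (with $m,n\ge2$) each path reflection fixes at most one coordinate value, so one can choose $i$ and $j$ off both axes, and a trivial-stabilizer vertex always exists. For $m=n$ one checks directly that the four lines cover all of $\{1,2\}^{2}$ and all of $\{1,2,3\}^{2}$, whereas for $n\ge4$ the vertex $(1,2)$ avoids every line; this yields the stated dichotomy, trivial stabilizer ``unless $2\le m=n\le3$''. This part is a finite case analysis and should be routine.

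Next, granting part (a), I would analyze the natural order-preserving map $\Phi$ from the subgroup lattice of $G:=\Aut(P_{m}\oblong P_{n})$ to the lattice of equitable partitions, sending a subgroup to its orbit partition (if $H\le K$ then the $H$-orbits refine the $K$-orbits, so $\Phi(H)\le\Phi(K)$). By part (a), $\Phi$ is surjective. If some vertex $v$ has trivial stabilizer, then $\Phi$ is injective: from $\Phi(H)=\Phi(K)$ we get $Hv=Kv$, and $hv=kv$ forces $k^{-1}h\in\mathrm{Stab}(v)=\{e\}$, so $H=K$; the same computation shows $\Phi(H)\le\Phi(K)\Rightarrow H\le K$, so $\Phi$ is an order isomorphism and hence a lattice isomorphism. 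Conversely, when no trivial-stabilizer vertex exists, that is $m=n\in\{2,3\}$, I would exhibit that $\Phi$ collapses distinct subgroups: for $P_{2}\oblong P_{2}=C_{4}$ there are $10$ subgroups of $D_{4}$ but only $7$ equitable partitions, so the two lattices have different cardinalities and cannot be isomorphic, and the $3\times3$ case is checked the same way against the computed list on the companion web site \cite{invariantWEB}. Thus the isomorphism holds precisely when a trivial-stabilizer vertex exists.

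The hard part is part (a): every equitable partition of $P_{m}\oblong P_{n}$ is the orbit partition of some subgroup of $G$. Unlike the general situation --- the McKay graph of Figure~\ref{fig:equitable} already carries an equitable partition that is not an orbit partition --- the grid appears to be \emph{rigid}, and proving this rigidity is the crux. One approach is a boundary-peeling induction: equitability first forces every class to be degree-homogeneous, separating the four corners (degree $2$), the non-corner boundary vertices (degree $3$), and the interior (degree $4$); one then tracks how the relations among the corner and boundary classes propagate inward, aiming to show that the class of each vertex is forced to coincide with its orbit under the symmetries the corner/boundary data already respect. An alternative is spectral: the adjacency matrix is $A_{m}\otimes I_{n}+I_{m}\otimes A_{n}$, whose joint eigenvectors are tensor products of the cosine eigenvectors of a path, and the synchrony subspace of an equitable partition is an invariant subspace spanned by $0/1$ characteristic vectors; the goal would be to show that any integer, characteristic-vector-spanned invariant subspace is fixed by a reflection or rotation. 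Either route must overcome the obstruction that eigenvalue coincidences allow many invariant subspaces, so the real content is proving that none of the exotic ones is spanned by characteristic vectors. I expect this rigidity step, rather than parts (b) and (c), to be where essentially all the difficulty lies, consistent with the statement being offered as a conjecture supported only by computation up to $m=n=20$.
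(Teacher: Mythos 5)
You should note at the outset that this statement is a \emph{Conjecture} in the paper: the authors give no proof at all, only computational evidence (equitable partitions computed up to $m=n=20$) and two suggestions for how a proof might go (applying the eigenvector-based algorithms of \cite{Aguiar&Dias,KameiLattice} symbolically, or extending the techniques of \cite{Antoneli2}). So there is no paper proof to compare against, and the honest standard is whether your proposal closes the conjecture. It does not, but the parts you do carry out are sound. Your part (c) (the fixed-line case analysis showing a trivial-stabilizer vertex exists iff not $2\le m=n\le 3$, using $\Aut(P_m\oblong P_n)\cong\mathbb{Z}_2\times\mathbb{Z}_2$ for $m\ne n$ and $D_4$ for $m=n$) is correct, modulo the trivial $m=1$ edge case you implicitly exclude. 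Your part (b) is also correct: if $v$ has trivial stabilizer, then $\Phi(H)\le\Phi(K)$ forces $Hv\subseteq Kv$ and hence $H\le K$, so $\Phi$ is an order embedding, and surjectivity is exactly part (a); and for $P_2\oblong P_2=C_4$ the cardinality mismatch ($10$ subgroups of $D_4$ versus $7$ equitable partitions) rules out any isomorphism, with the $3\times 3$ case deferred to the computed list. This cleanly reduces the conjecture to its first sentence.

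The genuine gap is part (a), and you say so yourself: the claim that every equitable partition of $P_m\oblong P_n$ is an orbit partition is the entire content of the conjecture, and your proposal offers only two unexecuted strategies for it. Neither sketch contains the key idea that would make it work. The boundary-peeling induction establishes only that classes are degree-homogeneous (corners/edges/interior), and you give no mechanism by which the inward propagation forces orbit structure rather than merely equitability --- this is precisely the step that fails on the McKay graph, so any proof must use grid-specific structure you have not identified. The spectral route faces the obstruction you name but do not overcome: the eigenvalues of $A_m\otimes I_n+I_m\otimes A_n$ are sums $\lambda_i+\mu_j$ of path eigenvalues, these collide in abundance, and each coincidence produces invariant subspaces beyond those forced by symmetry (indeed such ``anomalous'' subspaces are the paper's motivating phenomenon); showing that none of them is spanned by characteristic vectors of a partition is exactly the open problem restated in different language. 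So your proposal is best read as a correct and useful reduction of the conjecture to its core rigidity assertion, together with a proof of the peripheral claims --- not as a proof of the statement, which remains open in the paper as well.
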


Note that the orbit partitions for $\mathbb{Z}_{4}$ and $D_{4}$
are the same for $P_{2}\oblong P_{2}$ and $P_{3}\oblong P_{3}$.
An algorithm using the eigenvectors of the adjacency matrix \cite{Aguiar&Dias,KameiLattice}
can possibly be applied symbolically for all $m$ and $n$ to prove
this conjecture. Alternatively, the techniques of \cite{Antoneli2}
might also provide a proof.

\subsection{\label{subsec:Balanced}Balanced partitions of coupled cell networks\label{subsec:balanced-coupled-cell-networks}}

Synchrony in coupled cell networks introduced by \cite{Pivato} and
refined by \cite{Torok} provide examples of $\mathcal{M}$-invariant
synchrony subspaces. A \emph{coupled cell network system} is a set
of dynamical systems coupled according to the arrows of a digraph.
An interesting case is where some or all of the cells have the same
internal dynamics, so partial or full synchronization of the cells
is possible. Coupled cells are of interest to physicists and biologists
\cite{BonaccorsiOttavianoMugnoloPellegrini,PecoraClusterSynch,SchaubClusterSynch,SorrentinoClusterSynch,Rai}.
A good mathematical review is \cite{aldis2010balance}.

A \emph{coupled cell network} is formally defined in \cite{Torok}
as a digraph with an equivalence relation on the set $\{1,\ldots,n\}$
of cells and another equivalence relation on the arrows, along with
the consistency condition that equivalent arrows have equivalent heads
and equivalent tails. Intuitively, two cells are in the same equivalence
class if they have the same internal dynamics, and two arrows are
in the same equivalence class if they represent couplings of the same
type. The equivalence classes of the cells and arrows can be considered
as colorings. So a cell network is essentially a digraph with compatible
cell and arrow colorings. We use different symbols in our figures
to indicate the cell colors, and solid or dashed arrows for the edge
colors.

A coupled cell network system defined in \cite{Torok} on the coupled
cell network is an ODE defined by an \emph{admissible vector field}.
To define admissible vector fields, a phase space $P_{i}$ is specified
for each cell, and the total phase space is the Cartesian product
$P=P_{1}\times\cdots\times P_{n}$ of the cell phase spaces. Given
a coupled cell network on the digraph $G$ and a total phase space
$P$, the set of all admissible vector fields is $\mathcal{F}_{G}^{P}$,
the collection of all vector fields compatible with the structure
of the colored graph $G$.

The \emph{polydiagonal subspace} of a cell partition $\mathcal{A}$
of a cell network system is $\Delta_{\mathcal{A}}^{P}:=\{x\in P\mid x_{i}=x_{j}\mbox{ if }[i]=[j]\}$,\textcolor{blue}{{}
}see \cite{Aguiar&Dias,Torok,KameiLattice,NSS6,Pivato}\textcolor{blue}{.
}Note that $\Delta_{\mathcal{A}}^{P}=\sys(\mathcal{A})$ if $P=\R^{n}$.
A cell partition $\mathcal{A}$ of a coupled cell network system is
\emph{polysynchronous} if $\Delta_{\mathcal{A}}^{P}$ is invariant
under the vector field of the system. A cell partition $\mathcal{A}$
of a coupled cell network is \emph{robustly polysynchronous} if $\Delta_{\mathcal{A}}^{P}$
is invariant under every vector field in $\mathcal{F}_{G}^{P}$ \textcolor{black}{for
all choices of $P$}\textcolor{blue}{.}

The \emph{adjacency matrix} of a digraph with $n$ vertices is the
$n\times n$ matrix $A$ with $A_{i,j}$ equal to the number of arrows
from vertex $j$ to vertex $i$. Our adjacency matrix is the \emph{in-adjacency}
matrix, which is the preferred choice for cell networks \cite{Aguiar&Dias}.
In graph theory it is more common to use the \emph{out-adjacency}
matrix, which is the transpose of our $A$.
\begin{example}
The matrix of Example~\ref{exa:sublattice} shown in Figure~\ref{fig:sublattice}
is the adjacency matrix of the graph shown in Figure~\ref{fig:infNotBalanced}.
The adjacency matrix of the digraph in Figure~\ref{fig:equitable}(iii)
is $\left[\begin{smallmatrix}1 & 1\\
3 & 0
\end{smallmatrix}\right]$.
\end{example}

Restricting the cell network to a single edge type creates a \emph{monochrome
digraph} of the cell network. The information about a cell network
is encoded by the set $\mathcal{M}$ of adjacency matrices of these
monochrome digraphs together with the \emph{cell type partition} $\mathcal{T}$
determined by the vertex types.

If $i$ is a vertex of a digraph, then the \emph{in-neighborhood}
of $i$ is denoted by $N^{-}(i)$. The in-neighborhood consists of
the vertices from which there is an arrow to $i$. The in-neighborhood
is called the \emph{input set} in \cite{Pivato} and subsequent articles.
Let $\mathcal{A}$ be a partition of the vertex set of a digraph.
The \emph{in-degree} of a vertex $i$ \emph{relative to the class}
$B$ is $d_{B}^{-}(i):=|N^{-}(i)\cap B|$.
\begin{defn}
Consider a coupled cell network with cell type partition $\mathcal{T}$.
A cell partition $\mathcal{A}$ is called \emph{balanced} if $\mathcal{A}\le\mathcal{T}$
and $d_{B}^{-}(i)=d_{B}^{-}(j)$ for all $i,j\in A\in\mathcal{A}$
and $B\in\mathcal{A}$ in every monochrome subgraph.
\end{defn}

\begin{example}
A simple graph has no multiple edges or loops. Such a graph determines
a cell network with a single class of vertices and a single class
of edges. The adjacency matrix of the cell network is symmetric, in
fact, it is the adjacency matrix of the original graph. So the equitable
partitions of the graph are exactly the balanced partitions of the
corresponding cell network.
\end{example}

The following result is the main reason why balanced partitions are
important for cell networks.
\begin{prop}
\label{prop:mainReason}\cite[Theorem 4.3]{Torok}\textcolor{black}{{}
A cell partition of a coupled cell network is robustly polysynchronous
if and only if it is balanced.}
\end{prop}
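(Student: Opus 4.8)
The plan is to prove both implications by exploiting the explicit form of an admissible vector field $f\in\mathcal{F}_{G}^{P}$: each component $f_{i}$ depends only on the state $x_{i}$ of cell $i$ and on the states of the tails of the arrows pointing into $i$; cells of the same type share the same response function; and $f_{i}$ is invariant under any permutation of its inputs that preserves arrow type. These three structural features are precisely what make the balance condition the correct hypothesis, and the argument below simply tracks them through the polydiagonal.

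For the easy direction I would assume $\mathcal{A}$ is balanced and fix an arbitrary $P$, an arbitrary $f\in\mathcal{F}_{G}^{P}$, and an arbitrary $x\in\Delta_{\mathcal{A}}^{P}$, then verify $f_{i}(x)=f_{j}(x)$ whenever $i,j$ lie in a common class $A\in\mathcal{A}$, which is exactly the statement $f(x)\in\Delta_{\mathcal{A}}^{P}$. Since $\mathcal{A}\le\mathcal{T}$, the cells $i$ and $j$ have the same type and hence the same response function, and $x_{i}=x_{j}$ because $x\in\Delta_{\mathcal{A}}^{P}$. It then remains to match the coupling inputs: for each arrow type and each class $B$, the balance condition $d_{B}^{-}(i)=d_{B}^{-}(j)$ says that $i$ and $j$ receive the same number of arrows of that type from $B$, and since $x$ is constant on each class those inputs all carry a single common value $v_{B}$. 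Thus $i$ and $j$ see identical input multisets type by type, so the permutation-invariance of the response function forces $f_{i}(x)=f_{j}(x)$. As $P$, $f$, and $x$ were arbitrary, $\mathcal{A}$ is robustly polysynchronous.

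For the converse I would argue contrapositively, producing for each failure of balance an admissible vector field on $P=\R^{n}$ that does not preserve $\Delta_{\mathcal{A}}^{\R^{n}}=\sys(\mathcal{A})$. If $d_{B}^{-}(i)\neq d_{B}^{-}(j)$ for some class $B$ in some monochrome subgraph, let $M$ be the adjacency matrix of that subgraph; the linear field $f(x)=Mx$, which is pure summation over the inputs of that single arrow type, is admissible. For $x\in\sys(\mathcal{A})$, writing $x_{k}=v_{[k]}$, one computes $(Mx)_{i}=\sum_{B}d_{B}^{-}(i)\,v_{B}$, so requiring $(Mx)_{i}=(Mx)_{j}$ for all $x\in\sys(\mathcal{A})$ forces $d_{B}^{-}(i)=d_{B}^{-}(j)$ for every $B$; the assumed mismatch therefore exhibits an $x$ with $f(x)\notin\sys(\mathcal{A})$. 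If instead $\mathcal{A}\not\le\mathcal{T}$, so some $i,j$ in one class have different types, I would take a purely internal field $f_{k}(x)=g_{[k]_{\mathcal{T}}}(x_{k})$ with $g$ chosen to distinguish those two types; evaluated on the diagonal this again breaks invariance. In either case $\mathcal{A}$ fails to be robustly polysynchronous, which completes the contrapositive.

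The main obstacle is the converse, and specifically the bookkeeping required to guarantee that the detecting vector fields are genuinely admissible: that summation over a single arrow type respects the type-consistency and input-symmetry axioms of $\mathcal{F}_{G}^{P}$, and that the phase-space identifications implicit in $\Delta_{\mathcal{A}}^{P}$ are compatible with the internal-dynamics field used to rule out $\mathcal{A}\not\le\mathcal{T}$. Once admissibility is secured, the computation $(Mx)_{i}=\sum_{B}d_{B}^{-}(i)\,v_{B}$ makes the equivalence transparent; indeed it shows, in the notation of the rest of the paper, that robust polysynchrony of $\mathcal{A}$ is the same as $\sys(\mathcal{A})$ being invariant under the collection $\mathcal{M}$ of monochrome adjacency matrices together with $\mathcal{A}\le\mathcal{T}$.
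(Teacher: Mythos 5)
Your proof is correct in substance, but note that the paper itself gives no proof of this proposition: it is stated as a citation of Theorem~4.3 of Golubitsky--Stewart--T\"or\"ok, and your argument is essentially a reconstruction of the standard proof from that reference (forward direction: balance plus $\mathcal{A}\le\mathcal{T}$ makes cells in a common class input-equivalent with matching input multisets on the polydiagonal, hence equal components; converse: the linear admissible field $x\mapsto Mx$ for each monochrome adjacency matrix $M$, together with type-distinguishing internal dynamics, detects any failure of balance). The only imprecision worth flagging is your axiom that ``cells of the same type share the same response function'': in the multiarrow formalism it is \emph{input-equivalent} cells whose components are identified (via an arrow-type-preserving bijection of input sets), since cells of equal type may have inputs of different arities; this does not damage your argument, because balance guarantees exactly the needed input equivalence in the forward direction, and your detecting fields in the converse are admissible under the correct formalism.
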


Note that the choice of the phase space $P$ has no effect on whether
a partition is robustly polysynchronous, since being balanced does
not depend on $P$. The following is a well-known result, see \cite[Remark 2.12]{Aguiar&Dias}.
\begin{prop}
\textcolor{black}{\label{prop:balancedIsMinvariant}The set of balanced
partitions of a coupled cell network with set of adjacency matrices
$\mathcal{M}$ and cell type partition $\mathcal{T}$ is $\Pi_{\mathcal{M}}\cap\downarrow\mathcal{T}$.}
\end{prop}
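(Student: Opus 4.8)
The plan is to peel off the constraint $\mathcal{A}\le\mathcal{T}$, which appears verbatim both in the definition of \emph{balanced} and in the description of the down-set $\downarrow\mathcal{T}$, and then to identify the remaining in-degree condition with $\mathcal{M}$-invariance. Thus it suffices to prove that, for a partition $\mathcal{A}$ with $\mathcal{A}\le\mathcal{T}$, the condition $d_{B}^{-}(i)=d_{B}^{-}(j)$ for all $i,j$ in a common class $A\in\mathcal{A}$, all classes $B\in\mathcal{A}$, and every monochrome subgraph is equivalent to $\mathcal{A}\in\Pi_{\mathcal{M}}$.

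First I would fix a single monochrome adjacency matrix $M\in\mathcal{M}$ and use the identification $\sys(\mathcal{A})=\Col(P(\mathcal{A}))$ from Section~\ref{sec:Synchrony-subspaces}. Then $M\sys(\mathcal{A})=\Col(MP(\mathcal{A}))$, and since a column space is the span of its columns, $M$-invariance of $\sys(\mathcal{A})$ is equivalent to every column of $MP(\mathcal{A})$ lying in $\sys(\mathcal{A})$, i.e.\ to each such column being constant on the classes of $\mathcal{A}$. This phrasing gives the equivalence in both directions at once, so no separate converse argument is needed.

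The key computation is that the $(i,a)$-entry of $MP(\mathcal{A})$ is exactly the in-degree of $i$ relative to the class $C_{a}$. Writing $P(\mathcal{A})_{j,a}=\delta_{c_{j},a}$ for the characteristic matrix, I would compute $(MP(\mathcal{A}))_{i,a}=\sum_{j}M_{i,j}\delta_{c_{j},a}=\sum_{j\in C_{a}}M_{i,j}=d_{C_{a}}^{-}(i)$. Hence the $a$-th column of $MP(\mathcal{A})$ is constant on classes precisely when $d_{C_{a}}^{-}(i)=d_{C_{a}}^{-}(j)$ whenever $i,j$ share a class, and demanding this for every column index $a$ is exactly the balanced in-degree condition for the monochrome subgraph with matrix $M$. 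Taking the conjunction over all $M\in\mathcal{M}$ matches $\mathcal{M}$-invariance (invariance under every member of $\mathcal{M}$) against the balanced condition in every monochrome subgraph; reinstating $\mathcal{A}\le\mathcal{T}$ then yields the claimed equality of sets.

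The only delicate point is the bookkeeping between $d_{B}^{-}(i)=|N^{-}(i)\cap B|$, which as defined counts \emph{vertices}, and the row sum $\sum_{j\in B}M_{i,j}$, which counts \emph{arrows} and therefore differs when a monochrome subgraph carries parallel arrows of the same type. I would resolve this by recalling that the adjacency matrix records arrow multiplicities, so the quantity actually preserved by $M$ is the weighted count $\sum_{j\in B}M_{i,j}$; this is the in-degree relevant to robust polysynchrony, and with that convention the two sides agree entrywise. Granting this convention, the whole equivalence follows directly from $\sys(\mathcal{A})=\Col(P(\mathcal{A}))$ and requires no further machinery.
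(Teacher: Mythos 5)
Your proof is correct; note, however, that the paper itself gives no proof of Proposition~\ref{prop:balancedIsMinvariant} at all --- it cites the statement as well known, deferring to \cite[Remark 2.12]{Aguiar&Dias}. So your self-contained argument supplies what the paper leaves to a reference, and it does so along lines the paper endorses elsewhere: your key identity $(MP(\mathcal{A}))_{i,a}=\sum_{j\in C_a}M_{i,j}$ is exactly the ``matrix multiplication counts incoming arrows'' interpretation stated informally in Example~\ref{exa:cip}, and your criterion that invariance of $\sys(\mathcal{A})=\Col(P(\mathcal{A}))$ amounts to the columns of $MP(\mathcal{A})$ being constant on the classes of $\mathcal{A}$ is an entrywise restatement of Corollary~\ref{cor:psiM}, which asks that rows of the augmented matrix agree within classes. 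Your flag on the parallel-arrow subtlety is also the right call, and it is more than bookkeeping: with the paper's literal definition $d_B^-(i)=|N^-(i)\cap B|$ (a vertex count), the proposition as an equality of sets would be false for networks carrying parallel arrows of a single type --- for instance, two cells with monochrome adjacency matrix $M=\left[\begin{smallmatrix}0&2\\1&0\end{smallmatrix}\right]$ make the singleton partition ``balanced'' by vertex count, yet $M\sys(12)\not\subseteq\sys(12)$ --- so the in-degree must be read as the arrow count $\sum_{j\in B}M_{i,j}$, as you adopt, for the equivalence with $\Pi_{\mathcal{M}}\cap\downarrow\mathcal{T}$ to hold.
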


\begin{rem}
Note that $\Pi_{\mathcal{M}}$ is computed from the arrow-colored
digraph, with no need for the cell type partition. I\textcolor{black}{f
$\mathcal{T}$ is the singleton partition, then $\downarrow\mathcal{T}=\Pi$
and so $\Pi_{\mathcal{M}}\cap\downarrow\mathcal{T}=\Pi_{\mathcal{M}}$.}
\end{rem}

\begin{example}
\label{exa:balanced}
\begin{figure}
\setlength{\tabcolsep}{1pt}

\begin{tabular}{ccccccccc}
\includegraphics{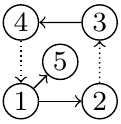} &  & \includegraphics{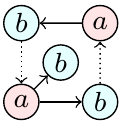} &  & \includegraphics{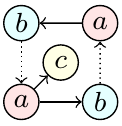} &  & \includegraphics{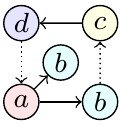} &  & \includegraphics{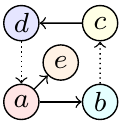}\tabularnewline
 & $\quad$ & $13|245$ & ~ & $13|24|5$ & ~ & $1|25|3|4$ & ~ & $1|2|3|4|5$\tabularnewline
 &  & $\mathcal{A}_{1}$ &  & $\mathcal{A}_{2}$ &  & $\mathcal{A}_{3}$ &  & $\mathcal{A}_{4}$\tabularnewline
(i) &  & \multicolumn{7}{c}{(ii)}\tabularnewline
\end{tabular}$\quad$%
\begin{tabular}{c}
\includegraphics{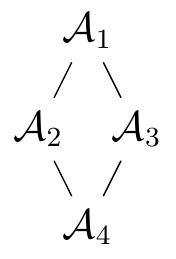}\tabularnewline
(iii)\tabularnewline
\end{tabular}

\caption{\label{fig:balEx}(i) A coupled cell network. (ii) Balanced partitions.
(iii) Lattice of balanced partitions.}
\end{figure}
Figure~\ref{fig:balEx} shows the balanced partitions of a cell network
with a single cell type. The cell type partition is $\mathcal{T}=12345$
and the set of adjacency matrices is $\mathcal{M}=\{M_{1},M_{2}\}$
where 
\[
M_{1}=\left[\begin{smallmatrix}0 & 0 & 0 & 0 & 0\\
1 & 0 & 0 & 0 & 0\\
0 & 0 & 0 & 0 & 0\\
0 & 0 & 1 & 0 & 0\\
1 & 0 & 0 & 0 & 0
\end{smallmatrix}\right],\quad M_{2}=\left[\begin{smallmatrix}\begin{smallmatrix}0 & 0 & 0 & 1 & 0\\
0 & 0 & 0 & 0 & 0\\
0 & 1 & 0 & 0 & 0\\
0 & 0 & 0 & 0 & 0\\
0 & 0 & 0 & 0 & 0
\end{smallmatrix}\end{smallmatrix}\right].
\]
The coarsest $\mathcal{M}$-invariant partition is $\mathcal{A}_{1}=13|245$.
This is the partition corresponding to input equivalence \cite{Torok},
since cells 1 and 3 feel one dashed arrow, and cells 2, 3, and 5 feel
one solid arrow.
\end{example}

In Example \ref{exa:balanced} the only cell type partitions compatible
with the arrow types are the singleton partition and $\mathcal{A}_{1}$,
the coarsest $\mathcal{M}$-invariant partition. In both cases, the
set of balanced partitions is the same. The next example shows that
sometimes the cell type partition $\mathcal{T}$ can effect the set
of balanced partitions. 
\begin{example}
\begin{figure}
\setlength{\tabcolsep}{1pt}

\begin{tabular}{ccccccccccccccc}
\begin{tabular}{c}
\includegraphics{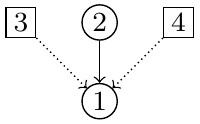}\tabularnewline
\end{tabular} &  & %
\begin{tabular}{c}
$1|234$\tabularnewline
$\mathcal{A}_{1}$\tabularnewline
\end{tabular} &  & %
\begin{tabular}{c}
$1|23|4$\tabularnewline
$\mathcal{A}_{2}$\tabularnewline
\end{tabular} &  & %
\begin{tabular}{c}
$1|24|3$\tabularnewline
$\mathcal{A}_{3}$\tabularnewline
\end{tabular} &  & %
\begin{tabular}{c}
$1|2|34$\tabularnewline
$\mathcal{A}_{4}$\tabularnewline
\end{tabular} &  & %
\begin{tabular}{c}
$1|2|3|4$\tabularnewline
$\mathcal{A}_{5}$\tabularnewline
\end{tabular} &  & %
\begin{tabular}{c}
\includegraphics{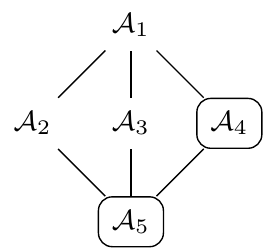}\tabularnewline
\end{tabular} &  & %
\begin{tabular}{c}
\includegraphics{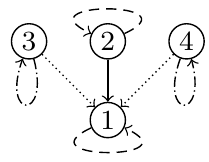}\tabularnewline
\end{tabular}\tabularnewline
(i) &  & \multicolumn{9}{c}{(ii)} &  & (iii) &  & (iv)\tabularnewline
\end{tabular}

\caption{\label{fig:balEx2}(i) A coupled cell network with two cell types
and two arrow types. (ii) $\mathcal{M}$-invariant partitions. (iii)
Lattice of $\mathcal{M}$-invariant partitions, with the balanced
partitions for the network circled. (iv) Another coupled cell network
with only one cell type and four arrow types\textcolor{blue}{,} and
the same balanced partitions as those of (i).}
\end{figure}
Consider the coupled cell network shown in Figure~\ref{fig:balEx2}(i).
The cell type partition is $\mathcal{T}=12|34$ and the set of adjacency
matrices is $\mathcal{M}=\{M_{1},M_{2}\}$, where 
\[
M_{1}=\left[\begin{smallmatrix}0 & 1 & 0 & 0\\
0 & 0 & 0 & 0\\
0 & 0 & 0 & 0\\
0 & 0 & 0 & 0
\end{smallmatrix}\right],\quad M_{2}=\left[\begin{smallmatrix}\begin{smallmatrix}0 & 0 & 1 & 1\\
0 & 0 & 0 & 0\\
0 & 0 & 0 & 0\\
0 & 0 & 0 & 0
\end{smallmatrix}\end{smallmatrix}\right].
\]

\noindent The elements of \textcolor{black}{$\Pi_{\mathcal{M}}(4)$}
are shown in Figure~\ref{fig:balEx2}(ii). The input partition, as
defined in \cite[Definition 2.3]{Torok}, is $\mathcal{A}_{1}$. Note
that the input partition $\mathcal{A}_{1}$ is not a refinement of
the cell type partition $\mathcal{T}$. This is the result of the
lack of incoming arrows at cells 2, 3, and 4. The balanced partitions
$\mathcal{A}_{4}$ and $\mathcal{A}_{5}$ of the cell network are
circled in Figure~\ref{fig:balEx2}(iii). As described by Proposition
\ref{prop:balancedIsMinvariant}, these  are the $\mathcal{M}$-invariant
partitions that are finer than $\mathcal{T}$.

Figure~\ref{fig:balEx2}(iv) shows another coupled cell network whose
balanced partitions are also $\mathcal{A}_{4}$ and $\mathcal{A}_{5}$.
There is only one vertex type in this network, so the cell type partition
is the singleton partition. The set of adjacency matrices for this
network is $\mathcal{M}_{\mathcal{T}}=\{M_{1},M_{2},V_{1},V_{2}\}$,
where 
\[
V_{1}=\left[\begin{smallmatrix}1 & 0 & 0 & 0\\
0 & 1 & 0 & 0\\
0 & 0 & 0 & 0\\
0 & 0 & 0 & 0
\end{smallmatrix}\right],\quad V_{2}=\left[\begin{smallmatrix}\begin{smallmatrix}0 & 0 & 0 & 0\\
0 & 0 & 0 & 0\\
0 & 0 & 1 & 0\\
0 & 0 & 0 & 1
\end{smallmatrix}\end{smallmatrix}\right].
\]
The loop structure of the second network takes over the role of the
cell type partition in the first network.
\end{example}

The previous example shows that the effect of the cell type partition
can be replaced by adding loop arrows to the cell network.
\begin{prop}
Let $\mathcal{T}$ be the cell type partition of a coupled cell network.
We construct a new coupled cell network by adding a loop at every
cell with arrow type identical to the cell type of the cell and changing
the cell type partition to be the singleton partition. The two coupled
cell networks have the same balanced partitions.
\end{prop}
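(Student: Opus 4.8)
The plan is to translate everything into the language of $\mathcal{M}$-invariant partitions and then apply Proposition~\ref{prop:balancedIsMinvariant}. Write $\mathcal{T}=\{T_1,\dots,T_k\}$ for the cell type partition of the first network and $\mathcal{M}$ for its set of monochrome adjacency matrices. In the second network each cell receives one new loop, and two such loops have the same (new) arrow type exactly when their cells lie in the same class of $\mathcal{T}$; hence the second network has adjacency matrices $\mathcal{M}'=\mathcal{M}\cup\{V_1,\dots,V_k\}$, where $V_a$ is the diagonal $0/1$ matrix whose $(i,i)$ entry is $1$ iff $i\in T_a$, i.e.\ the adjacency matrix of the monochrome digraph consisting of the loops at the cells of $T_a$. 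Its cell type partition is the singleton partition $\mathcal{S}$. By Proposition~\ref{prop:balancedIsMinvariant} the balanced partitions of the first network are $\Pi_{\mathcal{M}}\cap\downarrow\mathcal{T}$, while those of the second are $\Pi_{\mathcal{M}'}\cap\downarrow\mathcal{S}=\Pi_{\mathcal{M}'}$, since $\downarrow\mathcal{S}=\Pi(n)$. So the goal reduces to proving the identity $\Pi_{\mathcal{M}'}=\Pi_{\mathcal{M}}\cap\downarrow\mathcal{T}$.

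Next I would use that invariance distributes over unions of matrix sets: a partition is $\mathcal{M}'$-invariant iff its synchrony subspace is invariant under every matrix of $\mathcal{M}'$, so $\Pi_{\mathcal{M}'}=\Pi_{\mathcal{M}}\cap\Pi_{\{V_1,\dots,V_k\}}$. Comparing with the previous reduction, it therefore suffices to establish the key lemma $\Pi_{\{V_1,\dots,V_k\}}=\downarrow\mathcal{T}$; that is, $\sys(\mathcal{A})$ is invariant under all of $V_1,\dots,V_k$ exactly when $\mathcal{A}\le\mathcal{T}$.

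For this lemma I would argue by direct computation, noting that $V_a$ acts as the coordinate projection that fixes the components indexed by $T_a$ and zeroes all others. For the easy direction, if $\mathcal{A}\le\mathcal{T}$ then each class of $\mathcal{A}$ sits inside a single $T_b$; applying $V_a$ to $x\in\sys(\mathcal{A})$ leaves the components in $T_a$ unchanged and zeroes the rest, and in either case the components that were forced equal by $\mathcal{A}$ remain equal, so $V_a x\in\sys(\mathcal{A})$. For the converse I would prove the contrapositive: if $\mathcal{A}\not\le\mathcal{T}$ then some class $C$ of $\mathcal{A}$ straddles the $T_a$'s, containing $p\in T_a$ and $q\notin T_a$; taking $x$ to be the characteristic vector of $C$, which lies in $\sys(\mathcal{A})$, the vector $V_a x$ has $p$-th component $1$ but $q$-th component $0$, so it violates the constraint $x_p=x_q$ and $V_a x\notin\sys(\mathcal{A})$, whence $\mathcal{A}\notin\Pi_{\{V_1,\dots,V_k\}}$.

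The whole argument is short, and the only place requiring care---the main obstacle---is the converse in the last paragraph: one must pick the right test vector (the characteristic vector of a straddling class) and the right matrix $V_a$ to exhibit a failure of invariance, which is exactly what pins down $\downarrow\mathcal{T}$ rather than something larger. Once $\Pi_{\{V_1,\dots,V_k\}}=\downarrow\mathcal{T}$ is in hand, substituting back gives $\Pi_{\mathcal{M}'}=\Pi_{\mathcal{M}}\cap\downarrow\mathcal{T}$, and hence the two networks have the same balanced partitions.
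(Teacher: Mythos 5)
Your proposal is correct and follows essentially the same route as the paper: both arguments reduce the statement to the single key equivalence that a partition refines $\mathcal{T}$ exactly when it is balanced (equivalently, its synchrony subspace is invariant) with respect to the added loop structure, after which the conclusion is immediate. The only difference is one of language and detail: the paper states this equivalence combinatorially via the coloring vector of $\mathcal{T}$ and calls it an easy consequence of the definitions, whereas you prove it explicitly in the invariance framework of Proposition~\ref{prop:balancedIsMinvariant}, introducing the diagonal matrices $V_a$ and exhibiting the characteristic vector of a straddling class as a test vector --- a legitimate filling-in of the step the paper leaves to the reader.
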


\begin{proof}
Let $(c_{1},\ldots,c_{n}):=c(\mathcal{T})$ be the coloring vector
of the partition $\mathcal{T}$ of $\Pi(n)$. It is an easy consequence
of the definitions that $\mathcal{A}\le\mathcal{T}$ if and only if
$\mathcal{A}$ is a balanced partition of the $n$-cell network that
has a single loop arrow on cell $i$ with type $c_{i}$ for all $i$.
The result is an immediate consequence of this equivalence.
\end{proof}
\textcolor{black}{This result shows that robust polysynchrony of coupled
cell networks of \cite{Torok} can be fully described in terms of
a slightly modified arrow-colored digraph and its partitions that
are invariant under the set of adjacency matrices.}

\subsection{Cayley graphs}

Cayley graphs play an important role in group theory. There are several
types of Cayley graphs. Let $S$ be a subset of the group $\Gamma$.
The \emph{Cayley color digraph} $\Cay_{S}(\Gamma)$ of $\Gamma$ has
vertex set $\Gamma$ and arrow set $\{(g,gs)\mid g\in\Gamma,s\in S\}$.
The arrow $(g,gs)$ is colored by color $s$. If $S$ is a generating
set, then $\Cay_{S}(\Gamma)$ is connected, with an automorphism group
isomorphic to $\Gamma$. The following is clear from the definition.
\begin{lem}
\label{lem:nbd}The in-neighborhood of a vertex $g$ of $\Cay_{\{s\}}(\Gamma)$
is $N^{-}(g)=\{gs^{-1}\}$.
\end{lem}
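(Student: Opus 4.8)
The plan is to unwind the definition of the arrow set of $\Cay_{\{s\}}(\Gamma)$ and then solve a single equation in the group. Recall that $N^{-}(g)$ consists of those vertices from which there is an arrow to $g$. With $S=\{s\}$, every arrow of $\Cay_{\{s\}}(\Gamma)$ has the form $(h,hs)$ for some $h\in\Gamma$, so its tail is $h$ and its head is $hs$. Hence a vertex $h$ lies in $N^{-}(g)$ precisely when $hs=g$.

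First I would solve $hs=g$ in $\Gamma$. Because $s$ is invertible in the group, right multiplication by $s$ is a bijection of $\Gamma$, so the equation $hs=g$ has the unique solution $h=gs^{-1}$. Therefore exactly one vertex, namely $gs^{-1}$, has an arrow to $g$, which gives $N^{-}(g)=\{gs^{-1}\}$.

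There is essentially no obstacle here: the statement is a direct consequence of the definition of the arrow set, and the only group-theoretic input is that multiplication by the fixed generator $s$ is invertible, which guarantees that the in-neighborhood is the single element $gs^{-1}$ rather than a larger preimage.
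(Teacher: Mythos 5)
Your proof is correct and matches the paper's intent: the paper states this lemma without proof, remarking only that it is ``clear from the definition,'' and your argument is exactly the definitional unwinding being alluded to. Solving $hs=g$ uniquely via invertibility of right multiplication by $s$ is the right (and only) step needed.
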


The following is an immediate corollary.
\begin{cor}
\label{cor:inDeg}If $B$ is a class of a partition of $\Cay_{\{s\}}(\Gamma)$
and $g\in\Gamma$, then
\[
d_{B}^{-}(g)=\begin{cases}
0, & gs^{-1}\notin B\\
1, & gs^{-1}\in B.
\end{cases}
\]
\end{cor}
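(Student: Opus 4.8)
The plan is to unwind the definition of the in-degree relative to a class and then substitute the in-neighborhood computed in Lemma~\ref{lem:nbd}. Recall that the in-degree of $g$ relative to a class $B$ is defined by $d_B^-(g) := |N^-(g) \cap B|$. The essential observation is that for the single-generator Cayley digraph $\Cay_{\{s\}}(\Gamma)$, Lemma~\ref{lem:nbd} tells us $N^-(g) = \{gs^{-1}\}$ is a singleton, so the intersection $N^-(g) \cap B$ can contain at most one element and its cardinality is therefore forced to be either $0$ or $1$.

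Concretely, I would first rewrite $d_B^-(g) = |N^-(g) \cap B| = |\{gs^{-1}\} \cap B|$ by invoking Lemma~\ref{lem:nbd}. Then I would split into the two cases dictated by membership of the unique in-neighbor $gs^{-1}$ in $B$. If $gs^{-1} \in B$, the intersection equals $\{gs^{-1}\}$ and has cardinality $1$; if $gs^{-1} \notin B$, the intersection is empty and has cardinality $0$. These two cases reproduce exactly the piecewise formula in the statement.

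Since the argument amounts to a direct substitution into a definition followed by an elementary cardinality count for a singleton set, there is no genuine obstacle here; the corollary is immediate once Lemma~\ref{lem:nbd} is available, which is why it is flagged as an immediate corollary in the text. The only point worth highlighting is that restricting to a single color $s$ is precisely what guarantees each vertex has exactly one in-neighbor, and hence that $d_B^-(g)$ is a $0/1$ quantity rather than something larger; for a Cayley digraph built from several generators one would instead sum such indicator contributions over the generators.
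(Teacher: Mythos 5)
Your proof is correct and matches the paper's intent exactly: the paper states this as an immediate corollary of Lemma~\ref{lem:nbd} without further argument, and your unwinding of the definition $d_B^-(g)=|N^-(g)\cap B|=|\{gs^{-1}\}\cap B|$ followed by the two-case cardinality count is precisely that immediate argument.
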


\begin{lem}
\label{lem:path}Consider a balanced partition of $\Cay_{\{s\}}(\Gamma)$.
If $[g]=[h]$ then $[gs^{-1}]=[hs^{-1}]$.
\end{lem}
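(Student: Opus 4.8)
The plan is to unwind the definition of \emph{balanced} and feed it the single-generator structure recorded in Lemma~\ref{lem:nbd} and Corollary~\ref{cor:inDeg}. The key observation is that in $\Cay_{\{s\}}(\Gamma)$ every vertex has exactly one in-neighbor, so the relative in-degrees $d_B^-(g)$ are all either $0$ or $1$, and $d_B^-(g) = 1$ holds precisely for the \emph{one} class $B$ that contains $gs^{-1}$. Thus knowing the list of relative in-degrees of a vertex is the same as knowing which class its unique in-neighbor lies in.

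First I would assume $[g] = [h]$, so that $g$ and $h$ lie in a common class $A$ of the balanced partition. Next I would name $B := [gs^{-1}]$, the class of the in-neighbor of $g$. By Corollary~\ref{cor:inDeg} this gives $d_B^-(g) = 1$, since $gs^{-1} \in B$ by construction. The balanced condition, applied to the pair $i = g$, $j = h$ in $A$ and to the class $B$, then yields $d_B^-(h) = d_B^-(g) = 1$. Finally I would run Corollary~\ref{cor:inDeg} in the other direction: $d_B^-(h) = 1$ forces $hs^{-1} \in B$. Since $B = [gs^{-1}]$, this says $hs^{-1}$ and $gs^{-1}$ share a class, i.e.\ $[hs^{-1}] = [gs^{-1}]$, which is the claim.

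There is essentially no obstacle here, since everything is forced by the single-generator structure; the only point requiring a word of care is that \emph{balanced} is defined relative to the monochrome subgraphs, but $\Cay_{\{s\}}(\Gamma)$ carries a single arrow color, so there is exactly one monochrome subgraph and the relevant in-degree condition $d_B^-(i) = d_B^-(j)$ applies verbatim. (The cell-type refinement requirement $\mathcal{A} \le \mathcal{T}$ plays no role in the implication and need not be invoked.) The whole argument is a two-line application of Corollary~\ref{cor:inDeg} read forward and then backward, with the balanced hypothesis bridging the two readings.
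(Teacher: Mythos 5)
Your proof is correct and is essentially the same argument as the paper's: both take $B=[gs^{-1}]$, use Corollary~\ref{cor:inDeg} to get $d_{B}^{-}(g)=1$, and invoke balancedness to force $d_{B}^{-}(h)=1$, hence $hs^{-1}\in B$. The only difference is presentational — the paper phrases it contrapositively ($[gs^{-1}]\ne[hs^{-1}]$ would give $1\ne 0$), while you argue directly — which is not a substantive distinction.
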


\begin{proof}
If $[gs^{-1}]\not=[hs^{-1}]$ then 
\[
d_{[gs^{-1}]}^{-}(g)=1\ne0=d_{[gs^{-1}]}^{-}(h)
\]
by Corollary~\ref{cor:inDeg}, which is impossible in a balanced
partition.
\end{proof}
\begin{prop}
\label{prop:Cayley}Let $S$ be a generating set of a group finite
group $\Gamma$. The balanced partitions of $\Cay_{S}(\Gamma)$ are
the right coset partitions of $\Gamma$ by subgroups.
\end{prop}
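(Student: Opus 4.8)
The plan is to prove the two inclusions separately and to identify the relevant subgroup as $H:=[e]$, the class of the identity. Throughout I write $g\sim h$ as shorthand for $[g]=[h]$. The easy inclusion is that every right coset partition is balanced. Given a subgroup $H\le\Gamma$, fix a class $B=Hg_{0}$ and a generator $s\in S$. By Corollary~\ref{cor:inDeg}, $d_{B}^{-}(g)=1$ exactly when $gs^{-1}\in Hg_{0}$, equivalently when $g\in Hg_{0}s$, which is a single right coset. Since distinct right cosets are disjoint, $d_{B}^{-}$ equals $1$ on the class $Hg_{0}s$ and $0$ on every other class; in particular it is constant on each class, so the partition is balanced. (Note that generation of $\Gamma$ by $S$ plays no role in this direction.)

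The substantive inclusion is that a balanced partition $\mathcal{A}$ must be a right coset partition, and its crux is to promote the conclusion of Lemma~\ref{lem:path} from the generators $s\in S$ to all of $\Gamma$. I would introduce
\[
T:=\{t\in\Gamma\mid g\sim h\Rightarrow gt\sim ht\text{ for all }g,h\in\Gamma\},
\]
the set of elements whose right translation respects $\sim$. Lemma~\ref{lem:path} says precisely that $s^{-1}\in T$ for every $s\in S$, so $S^{-1}\subseteq T$. Moreover $e\in T$ and $T$ is closed under multiplication, so $T$ is a submonoid of the finite group $\Gamma$; every submonoid of a finite group is a subgroup, since each element has finite order and its inverse is one of its powers. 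A subgroup containing $S^{-1}$ contains $\langle S\rangle=\Gamma$, whence $T=\Gamma$. Thus $\sim$ is a right congruence: $g\sim h$ implies $gt\sim ht$ for every $t\in\Gamma$.

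Once $\sim$ is known to be a right congruence, the rest is routine group theory. For closure of $H=[e]$: if $a,b\in H$ then right-multiplying $a\sim e$ by $b$ gives $ab\sim b\sim e$, so $ab\in H$; and right-multiplying $a\sim e$ by $a^{-1}$ gives $e\sim a^{-1}$, so $a^{-1}\in H$. Hence $H$ is a subgroup. Finally $g\sim h\iff gh^{-1}\in H$: right-multiplying $g\sim h$ by $h^{-1}$ yields $gh^{-1}\sim e$, and right-multiplying $gh^{-1}\sim e$ by $h$ recovers $g\sim h$. Therefore $[g]=Hg$, and $\mathcal{A}$ is exactly the partition of $\Gamma$ into right cosets of $H$, completing the proof.

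I expect the main obstacle to be the middle step, namely upgrading the single-generator statement of Lemma~\ref{lem:path} to an honest right congruence on all of $\Gamma$. This is where both hypotheses are genuinely used: the generating property of $S$ lets $T$ exhaust $\Gamma$, and the finiteness of $\Gamma$ supplies inverses inside the submonoid $T$ (without which $T$ would only be a submonoid and $H$ only a subsemigroup). The coset bookkeeping before and after this step is straightforward.
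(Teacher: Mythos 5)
Your proof is correct, and its skeleton matches the paper's: the easy direction is the identical coset computation from Corollary~\ref{cor:inDeg}, and the hard direction rests, as in the paper, on iterating Lemma~\ref{lem:path} using the generating property of $S$ and finiteness of $\Gamma$. The organization of that iteration, however, is genuinely different, and yours buys more. The paper takes $g_i,g_j\in[e]$, writes a directed path $e\to g_j$ spelling $g_j=s_1\cdots s_k$, walks it backwards with Lemma~\ref{lem:path} to get $[g_ig_j^{-1}]=[e]$, concludes that $[e]$ is a subgroup, and then appeals to vertex transitivity of $\Cay_S(\Gamma)$ to claim this suffices. That last appeal is loose: left translations are color-preserving automorphisms, so they carry a balanced partition to a possibly \emph{different} balanced partition, which only shows each class is a left coset of some class-dependent subgroup; to conclude that every class equals a right coset $Hh$ of the fixed subgroup $H=[e]$ one needs right-invariance of $\sim$ in both directions, which the paper never states. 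Your submonoid $T$ of $\sim$-respecting right translations proves exactly this missing statement: $T$ contains $S^{-1}$ by Lemma~\ref{lem:path}, is a submonoid, hence (by finiteness) a subgroup, hence all of $\Gamma$, so $\sim$ is a right congruence and $[h]=Hh$ follows by routine algebra. Finiteness enters in your argument precisely where it must (inverting elements of $T$, equivalently writing $s$ as a positive power of $s^{-1}$), and your observation that generation is irrelevant to the easy direction is also correct. In short: same key lemma, but your packaging closes a gap that the paper's ``vertex transitivity'' step glosses over.
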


\begin{proof}
First we show that the right coset partition of $\Gamma$ with respect
to the subgroup $H$ is a balanced partition of the vertices of $\Cay_{\{s\}}(\Gamma)$
for all $s\in S$. Let $g_{i}\in\Gamma$ and $Hg$ be a coset. Then
\[
d_{Hg}^{-}(g_{i})=\begin{cases}
0, & g_{i}s^{-1}\not\in Hg\\
1, & g_{i}s^{-1}\in Hg
\end{cases}=\begin{cases}
0, & g_{i}\not\in Hgs\\
1, & g_{i}\in Hgs
\end{cases}
\]
by Corollary~\ref{cor:inDeg}. So the in-degree of $g_{i}$ relative
to $Hg$ is only dependent on the coset that contains $g_{i}$.

Now consider a balanced partition of $\Cay_{S}(\Gamma)$. This partition
is balanced in $\Cay_{\{s\}}(\Gamma)$ for all $s\in S$. Since $\Cay_{S}(\Gamma)$
is vertex transitive, it suffices to show that the class $[e]$ of
the identity element $e$ of $\Gamma$ is a subgroup of $\Gamma$
by checking that if $g_{i}$ and $g_{j}$ are in $[e]$, then so is
$g_{i}^{-1}g_{j}$. Since $S$ is a generating set, there is a path
in $\Cay_{S}(\Gamma)$ from $e$ to $g_{j}$ along some edges colored
with $s_{1},\ldots,s_{k}$ respectively. Then $g=es_{1}\cdots s_{k}$,
so the path starting at vertex $g_{i}g_{j}^{-1}$ along the edges
colored with $s_{1},\ldots,s_{k}$ respectively leads to $g_{i}g_{j}^{-1}s_{1}\cdots s_{k}=g_{i}$.
Since $[g_{i}]=[g_{j}]$, the repeated application of Lemma~\ref{lem:path}
implies that $[e]=[g_{j}]=[g_{i}g_{j}^{-1}]$.
\end{proof}
\begin{figure}
\begin{tabular}{ccc}
\begin{tabular}{c}
\includegraphics{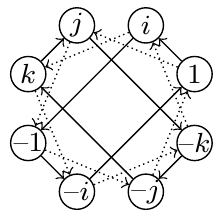}\tabularnewline
\includegraphics[clip]{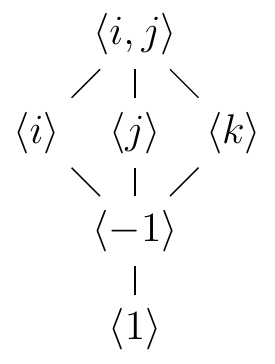}\tabularnewline
\end{tabular} & %
\begin{tabular}{ccc}
\includegraphics{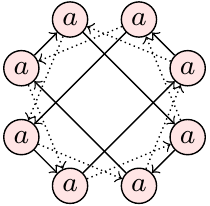} & \includegraphics{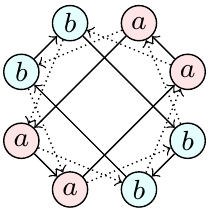} & \includegraphics{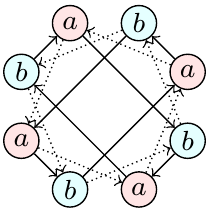}\tabularnewline
$12345678$ & $1256|3478$ & $1357|2468$\tabularnewline
$\mathcal{A}_{1}$ & $\mathcal{A}_{2}$ & $\mathcal{A}_{3}$\tabularnewline
\includegraphics{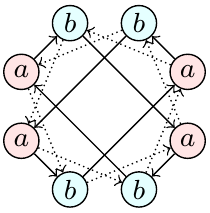} & \includegraphics{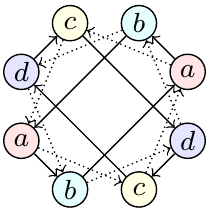} & \includegraphics{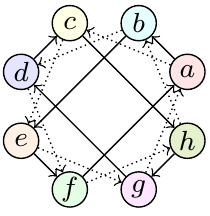}\tabularnewline
$1458|2367$ & $15|26|37|48$ & $1|2|3|4|5|6|7|8$\tabularnewline
$\mathcal{A}_{4}$ & $\mathcal{A}_{5}$ & $\mathcal{A}_{6}$\tabularnewline
\end{tabular} & %
\begin{tabular}{c}
\includegraphics{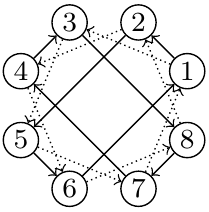}\tabularnewline
\includegraphics[clip]{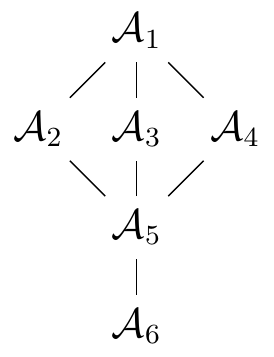}\tabularnewline
\end{tabular}\tabularnewline
(i) & (ii) & (iii)\tabularnewline
\end{tabular}

\caption{\label{fig:q8} (i) The Cayley color digraph $\protect\Cay_{\{i,j\}}(Q_{8})$
of the quaternion group $Q_{8}$ and its lattice of subgroups. A solid
arrow of the Cayley digraph is colored with $i$ while a dotted arrow
is colored with $j$. (ii) The balanced partitions of the cell network.
(iii) The corresponding cell network and the lattice of balanced partitions.}
\end{figure}

\begin{example}
Figure~\ref{fig:q8} illustrates Proposition~\ref{prop:Cayley}
for the Cayley color digraph $\Cay_{\{i,j\}}(Q_{8})$ of the quaternion
group, $Q_{8}=\{\pm1,\pm i,\pm j,\pm k\}$ with $i^{2}=j^{2}=k^{2}=ijk=-1$.
\end{example}

\subsection{Almost equitable partitions of graphs}

Almost equitable partitions were introduced in \cite{CardosoDelormeRama}
and used in \cite{BonaccorsiOttavianoMugnoloPellegrini,spectra,Gambuzza,Gerbaud,MonacoRicciardi}
and in many applications in network controllability, described in
Subsection~\ref{subsec:controllability}. It is very similar to the
notion of the equitable partition of Definition~\ref{def:equitable},
except only edges joining different sets in the partition are considered.
\begin{defn}
\label{def:almostEquitable}A partition $\mathcal{A}$ of the vertex
set of a graph is called \emph{almost equitable} if $d_{B}(i)=d_{B}(j)$
for all $i,j\in A\in\mathcal{A}$ and $A\ne B\in\mathcal{A}$.
\end{defn}

Almost equitable partitions are also known \cite{AguiarDiasWeighted,SchaubClusterSynch}
as external equitable partitions. Note that the singleton partition
is almost equitable and an equitable partition is almost equitable.
\begin{example}
\begin{figure}
\begin{tabular}{c}
\includegraphics{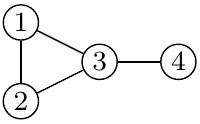}\tabularnewline
\end{tabular}%
\begin{tabular}{cccc}
$1234$ & $124|3$ & $12|3|4$ & $1|2|3|4$\tabularnewline
$\mathcal{A}_{1}$ & $\mathcal{A}_{2}$ & $\boxed{\mathcal{A}_{3}}$ & $\boxed{\mathcal{A}_{4}}$\tabularnewline
\end{tabular}

\caption{\label{fig:Paw}A graph and its almost equitable partitions. The boxed
partitions are equitable.}
\end{figure}

Figure~\ref{fig:Paw} shows a graph with its almost equitable partitions.
Only partitions $\mathcal{A}_{3}$ and $\mathcal{A}_{4}$ are equitable\textcolor{blue}{.
}The almost equitable partitions are linearly ordered since $\mathcal{A}_{1}>\mathcal{A}_{2}>\mathcal{A}_{3}>\mathcal{A}_{4}$.
\end{example}

The Laplacian matrix of a graph is defined by $L=D-A$, where $D$
is the diagonal matrix with $D_{i,i}=d(i)$ the degree of $i$, and
$A$ is the adjacency matrix. The following is a well-known result
stating that the $L$-invariant partitions are precisely the almost
equitable partitions. See for example \cite[Proposition 1]{CardosoDelormeRama},
\cite[Remark 2.2 (ii)]{AguiarDiasWeighted}, and \cite[Remark 3.14]{NSS6}.
\begin{prop}
If $L$ is the Laplacian matrix of a graph with vertex set $\{1,\ldots,n\}$,
then $\Pi_{L}(n)$ consists of the almost equitable partitions of
the graph.
\end{prop}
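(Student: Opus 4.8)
The plan is to translate $L$-invariance into a pointwise condition on the coordinates of $Lx$ for $x\in\sys(\mathcal{A})$, and then match that condition term-by-term to Definition~\ref{def:almostEquitable}. First I would record the reformulation that makes the problem concrete: by definition $\mathcal{A}\in\Pi_{L}(n)$ exactly when $L\sys(\mathcal{A})\subseteq\sys(\mathcal{A})$, and a vector $y$ lies in $\sys(\mathcal{A})$ precisely when $y_{i}=y_{j}$ whenever $[i]=[j]$. Hence $\mathcal{A}$ is $L$-invariant if and only if $(Lx)_{i}=(Lx)_{j}$ for every $x\in\sys(\mathcal{A})$ and every pair $i,j$ with $[i]=[j]$.

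The key computation is to evaluate $(Lx)_{i}$ for a vector $x$ that is constant on the classes of $\mathcal{A}$. Writing $v_{C}$ for the common value of $x$ on a class $C$ and using $L=D-A$ together with $d(i)=\sum_{B\in\mathcal{A}}d_{B}(i)$ and $\sum_{j\in B}A_{i,j}=d_{B}(i)$, I would obtain
\[
(Lx)_{i}=d(i)\,v_{[i]}-\sum_{B\in\mathcal{A}}d_{B}(i)\,v_{B}=\sum_{B\in\mathcal{A}}d_{B}(i)\,(v_{[i]}-v_{B})=\sum_{B\ne[i]}d_{B}(i)\,(v_{[i]}-v_{B}),
\]
where the term $B=[i]$ drops out because it contributes $d_{[i]}(i)(v_{[i]}-v_{[i]})=0$. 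This is exactly the place where the almost equitable condition, which ignores edges inside a single class, appears naturally.

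Then, for $i,j$ in the same class $A$ (so that $v_{[i]}=v_{[j]}=v_{A}$), subtracting gives
\[
(Lx)_{i}-(Lx)_{j}=\sum_{B\ne A}\bigl(d_{B}(i)-d_{B}(j)\bigr)(v_{A}-v_{B}).
\]
The final step is to argue that this expression vanishes for all $x\in\sys(\mathcal{A})$ if and only if $d_{B}(i)=d_{B}(j)$ for every class $B\ne A$. One direction is immediate. For the converse I would observe that the numbers $v_{A}-v_{B}$ with $B\ne A$ can be prescribed arbitrarily by a suitable choice of $x\in\sys(\mathcal{A})$ (fix $v_{A}$ and choose each remaining $v_{B}$ independently), so a linear combination of them with fixed coefficients vanishes identically only when each coefficient $d_{B}(i)-d_{B}(j)$ is zero. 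Combining this with the reformulation of the first paragraph shows that $\mathcal{A}\in\Pi_{L}(n)$ if and only if $d_{B}(i)=d_{B}(j)$ for all $i,j\in A\in\mathcal{A}$ and all $A\ne B\in\mathcal{A}$, which is precisely Definition~\ref{def:almostEquitable}.

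The main obstacle is not any single deep step but rather keeping the bookkeeping clean: correctly splitting off the diagonal $B=[i]$ term so that only inter-class degrees survive, and then justifying that the differences $v_{A}-v_{B}$ range freely over $\R^{|\mathcal{A}|-1}$, which is what lets me pass from ``the expression vanishes for all $x$'' to ``every coefficient vanishes.'' Everything else is a direct expansion of $L=D-A$ acting on class-constant vectors.
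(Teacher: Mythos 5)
Your proof is correct. Note that the paper itself does not prove this proposition at all: it is stated as a well-known fact with citations to the literature (Cardoso--Delorme--Rama, Aguiar--Dias, and NSS6), so there is no in-paper argument to compare against; your write-up supplies the missing elementary verification. The computation is sound at every step: the reformulation of invariance as $(Lx)_{i}=(Lx)_{j}$ for all class-constant $x$ and all $i,j$ with $[i]=[j]$, the expansion $(Lx)_{i}=\sum_{B\ne[i]}d_{B}(i)\,(v_{[i]}-v_{B})$ using $d(i)=\sum_{B}d_{B}(i)$ and $\sum_{j\in B}A_{i,j}=d_{B}(i)$, and crucially the cancellation of the $B=[i]$ term, which is exactly why intra-class edges are invisible to $L$ and why one gets \emph{almost} equitable rather than equitable partitions. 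The converse direction is also handled properly: since the values $v_{C}$ on distinct classes are independent free parameters of $\sys(\mathcal{A})$, setting all but one difference $v_{A}-v_{B_{0}}$ to zero isolates each coefficient $d_{B_{0}}(i)-d_{B_{0}}(j)$ and forces it to vanish. This gives a clean, self-contained proof that could be inserted into the paper in place of the citation.
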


This proposition shows that almost equitable partitions of a graph
can also be viewed as invariant partitions.
\begin{example}
\begin{figure}
\begin{tabular}{ccccc}
\begin{tabular}{c}
\includegraphics{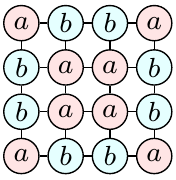}\tabularnewline
\end{tabular} & $\quad$ & %
\begin{tabular}{c}
\includegraphics{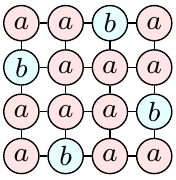}\tabularnewline
\end{tabular} & $\quad$ & %
\begin{tabular}{c}
\includegraphics{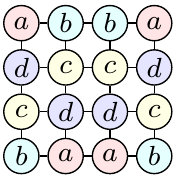}\tabularnewline
\end{tabular}\tabularnewline
(i) &  & (ii) &  & \multicolumn{1}{c}{(iii)}\tabularnewline
\end{tabular}

\caption{\label{fig:almosEquitable4x4}Three of the 13 almost equitable, but
not equitable, partitions of $P_{4}\square P_{4}$\textcolor{blue}{.}}
\end{figure}
The almost equitable partitions of $P_{m}\oblong P_{n}$ are much
more complicated than the equitable partitions. A conjectured classification
of the almost equitable partitions of $P_{1}\square P_{n}\cong P_{n}$
is in \cite{NSS6}\textcolor{blue}{.}\textcolor{red}{{} }The graph $P_{4}\oblong P_{4}$
has 23 almost equitable partitions in 17 group orbit classes. A few
of these partitions are shown in Figure~\ref{fig:almosEquitable4x4}.
The companion website \cite{invariantWEB} shows them all. By contrast
there are 10 equitable partitions in 8 group orbit classes for this
graph. Some almost equitable partitions for $P_{n}\square P_{n}$
are found in \cite{GillisGolubitsky} but not all of them. For example,
the almost equitable partitions shown in Figure~\ref{fig:almosEquitable4x4}(i)
are described as the periodic extensions of a $2\times2$ corner but
the almost equitable partitions in (ii) and (iii) are not in the families
described in \cite{GillisGolubitsky}. As mentioned after Conjecture~\ref{conj:PmPn},
the algorithms in \cite{Aguiar&Dias,KameiLattice} might allow a computation
of the lattice of almost equitable partitions of $P_{m}\oblong P_{n}$
since the eigenvectors of $L$ are known in closed form.
\end{example}

\subsection{Exo-balanced partitions of coupled cell networks}

The generalization of almost equitable partitions from graphs to coupled
cell networks is immediate.
\begin{defn}
Consider a coupled cell network with cell type partition $\mathcal{T}$.
A cell partition $\mathcal{A}$ is called \emph{exo-balanced} if $\mathcal{A}\le\mathcal{T}$
and $d_{B}^{-}(i)=d_{B}^{-}(j)$ for all $i,j\in A\in\mathcal{A}$
and $A\ne B\in\mathcal{A}$ in every monochrome subgraph.
\end{defn}

If there is only one cell type, so $\mathcal{T}$ is the singleton
partition, then $\mathcal{T}$ is exo-balanced. Furthermore, every
balanced partition is exo-balanced, but the converse is not true.
\begin{example}
\begin{figure}
\begin{tabular}{c}
\includegraphics{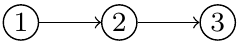}\tabularnewline
\end{tabular}%
\begin{tabular}{ccc}
$123$ & $12|3$ & $1|2|3$\tabularnewline
$\mathcal{A}_{1}$ & $\mathcal{A}_{2}$ & $\boxed{\mathcal{A}_{3}}$\tabularnewline
\end{tabular}

\caption{\label{fig:forPath}A coupled cell network and its exo-balanced partitions.
The boxed partition is balanced.}
\end{figure}

Figure~\ref{fig:forPath} shows a coupled cell network and its exo-balanced
partitions. Only partition $\mathcal{A}_{3}$ is balanced.
\end{example}

The \emph{Laplacian matrix} of a digraph with adjacency matrix $A$
is $L=D-A$, where $D$ is the diagonal matrix with $D_{i,i}=\sum_{j}A_{i,j}$.
The set of Laplacian matrices for a coupled cell network consists
of all the Laplacian matrices of the monochrome subgraphs.
\begin{example}
The Laplacian matrix for the coupled cell network in Figure~\ref{fig:forPath}
is $\left[\begin{smallmatrix}0 & 0 & 0\\
-1 & 1 & 0\\
0 & -1 & 1
\end{smallmatrix}\right].$
\end{example}

The following proposition is a straightforward consequence of the
definitions and the fact that the Laplacian matrix for each arrow
type satisfies $\sum_{j}A_{i,j}=0$. See \cite{AguiarDiasWeighted,CardosoDelormeRama,PecoraClusterSynch,SchaubClusterSynch}.
\begin{prop}
\label{prop:Linv}The set of exo-balanced partitions of a coupled
cell network with set of Laplacian matrices $\mathcal{M}$ and cell
type partition $\mathcal{T}$ is $\Pi_{\mathcal{M}}\cap\downarrow\mathcal{T}$.
\end{prop}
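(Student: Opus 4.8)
The plan is to fix one monochrome subgraph at a time, with adjacency matrix $A$, diagonal in-degree matrix $D$, and Laplacian $L=D-A$, and to show directly that $\sys(\mathcal{A})$ is $L$-invariant if and only if $\mathcal{A}$ satisfies the exo-balanced in-degree condition in that subgraph. Since membership in $\Pi_{\mathcal{M}}$ requires invariance under \emph{every} Laplacian in $\mathcal{M}$ and the exo-balanced definition imposes its condition in \emph{every} monochrome subgraph, intersecting the per-subgraph equivalences over all arrow types, together with the common constraint $\mathcal{A}\le\mathcal{T}$ (that is, $\mathcal{A}\in\downarrow\mathcal{T}$), yields that the exo-balanced partitions are exactly $\Pi_{\mathcal{M}}\cap\downarrow\mathcal{T}$. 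This mirrors Proposition~\ref{prop:balancedIsMinvariant}, the only difference being which in-degree terms survive.

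First I would compute $Lx$ for $x\in\sys(\mathcal{A})$. Writing $x_k=y_{[k]}$ for the common value of $x$ on the class $[k]$, and using $D_{i,i}=\sum_j A_{i,j}$ (equivalently, the zero row-sum of $L$),
\[
 (Lx)_i=\sum_j A_{i,j}(x_i-x_j)=\sum_{B\in\mathcal{A}} \Big(\sum_{j\in B} A_{i,j}\Big)(y_{[i]}-y_B)=\sum_{B\ne[i]} d_B^-(i)\,(y_{[i]}-y_B),
\]
where the term $B=[i]$ drops out because $y_{[i]}-y_{[i]}=0$. This is the crucial step: the Laplacian's zero row-sum annihilates the self-class contribution, so only the cross-class in-degrees $d_B^-(i)$ with $B\ne[i]$ appear. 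For the adjacency matrix itself one would instead obtain $(Ax)_i=\sum_{B} d_B^-(i)\,y_B$, which retains the $B=[i]$ term; this is precisely the discrepancy between the balanced and exo-balanced conditions.

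Finally, $\sys(\mathcal{A})$ is $L$-invariant exactly when $(Lx)_i=(Lx)_j$ for every $x\in\sys(\mathcal{A})$ and every pair $i,j$ in a common class $A\in\mathcal{A}$. By the formula above, $(Lx)_i-(Lx)_j=\sum_{B\ne A}\big(d_B^-(i)-d_B^-(j)\big)(y_A-y_B)$, and since the quantities $y_A-y_B$ for $B\ne A$ are independent free parameters as $x$ ranges over $\sys(\mathcal{A})$, this vanishes identically if and only if $d_B^-(i)=d_B^-(j)$ for all $B\ne A$ — the exo-balanced condition in this subgraph. The main obstacle, and really the only point requiring care, is this independence argument: one must confirm that the coefficients $y_A-y_B$ can be varied freely so that vanishing of the sum forces each coefficient to vanish, and attend to the bookkeeping that identifies $\sum_{j\in B}A_{i,j}$ with $d_B^-(i)$ within a monochrome subgraph. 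Assembling these per-subgraph equivalences together with the requirement $\mathcal{A}\le\mathcal{T}$ then completes the proof.
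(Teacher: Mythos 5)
Your proof is correct and follows exactly the route the paper intends: the paper offers no detailed argument for this proposition, noting only that it is a straightforward consequence of the definitions and the zero row-sum property of each monochrome Laplacian, and your computation $(Lx)_i=\sum_{B\ne[i]}d_B^-(i)\,(y_{[i]}-y_B)$ together with the free-parameter argument is precisely that consequence written out. The per-subgraph equivalence, intersected over all arrow types and combined with $\mathcal{A}\le\mathcal{T}$, is exactly what the paper's statement asserts, so nothing is missing.
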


The main application\textcolor{blue}{{} }of exo-balanced partitions
to coupled cell networks involves those cases where two cells do not
interact when they are in the same state. We do not know of a general
theorem along the lines of Proposition~\ref{prop:mainReason}. The
results we know require some specific functional form of the coupling.
We give two such cases. Following the discussion in Subsection~\ref{subsec:balanced-coupled-cell-networks},
we assume without loss of generality that the coupled cell network
has one cell type.

In~\cite{NSS6}, a \emph{difference-coupled vector field} is defined
for coupled cell networks with symmetric adjacency matrix $A$, which
are called graph networks in that paper. Difference coupled vector
fields have the form
\[
\dot{x}_{i}=g(x_{i})+\sum_{j=1}^{n}A_{ij}h(x_{j}-x_{i}),
\]
where $g,h:\R^{k}\rightarrow\R^{k}$. If the coupling function satisfies
$h(0)=0$, then there is no coupling between cells in the same state.
The result \cite[Theorem 3.13]{NSS6} can be generalized to coupled
cell networks with more than one arrow type.
\begin{prop}
\label{prop:nss6exoDigraphs}\textcolor{blue}{{} }Consider a coupled
cell network with a single cell type and a set of adjacency matrices
$\mathcal{M}$. A cell partition $\mathcal{A}$ is exo-balanced if
and only if $\Delta_{\mathcal{A}}^{P}$ is invariant for all coupled
cell network systems of the form\textup{
\[
\dot{x}_{i}=g(x_{i})+\sum_{A\in\mathcal{M}}\sum_{j=1}^{n}A_{i,j}h_{A}(x_{j}-x_{i}),
\]
}with smooth functions \textup{$g,h_{A}:\R^{k}\rightarrow\R^{k}$
satisfying $h_{A}(0)=0$ for all $A\in\mathcal{M}$.}
\end{prop}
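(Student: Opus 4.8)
The plan is to prove the biconditional by relating the invariance of $\Delta_{\mathcal{A}}^{P}$ to the defining condition of exo-balance, namely $d_{B}^{-}(i)=d_{B}^{-}(j)$ for all $i,j$ in a common class $A$ and all $B\ne A$. Since we assume a single cell type, $\mathcal{T}$ is the singleton partition, so by Proposition~\ref{prop:Linv} exo-balance is equivalent to $\mathcal{A}\in\Pi_{\mathcal{M}_{L}}$, where $\mathcal{M}_{L}=\{L_{A}\mid A\in\mathcal{M}\}$ is the set of Laplacian matrices $L_{A}=D_{A}-A$. So I would aim to show that $\Delta_{\mathcal{A}}^{P}$ is invariant under every difference-coupled field of the given form if and only if $L_{A}\sys(\mathcal{A})\subseteq\sys(\mathcal{A})$ for each $A\in\mathcal{M}$. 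This reduces the statement to a single clean linear-algebraic condition on each $L_{A}$.

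First I would set up the vector field $F$ componentwise, writing $F_{i}(x)=g(x_{i})+\sum_{A}\sum_{j}A_{i,j}h_{A}(x_{j}-x_{i})$, and recall that invariance of a polydiagonal $\Delta_{\mathcal{A}}^{P}$ under a smooth field means: whenever $x\in\Delta_{\mathcal{A}}^{P}$ (so $x_{i}=x_{j}$ whenever $[i]=[j]$), the output satisfies $F_{i}(x)=F_{j}(x)$ whenever $[i]=[j]$. So I would fix $x\in\Delta_{\mathcal{A}}^{P}$ and compute $F_{i}(x)-F_{j}(x)$ for $i,j$ in the same class $A$. The term $g(x_{i})-g(x_{j})$ vanishes immediately because $x_{i}=x_{j}$. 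The coupling difference becomes $\sum_{A}\sum_{\ell}\bigl(A_{i,\ell}-A_{j,\ell}\bigr)h_{A}(x_{\ell}-x_{i})$, again using $x_{i}=x_{j}$.

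For the backward direction (exo-balanced $\Rightarrow$ invariant), I would group the sum over $\ell$ by the class $B=[\ell]$: since $x$ is constant on each class, $h_{A}(x_{\ell}-x_{i})$ depends only on $B$, and summing $A_{i,\ell}-A_{j,\ell}$ over $\ell\in B$ gives $d_{B}^{-}(i)-d_{B}^{-}(j)$ in the monochrome subgraph of $A$. For $B\ne A$ exo-balance makes this zero, and for the diagonal block $B=A$ the argument of $h_{A}$ is $x_{i}-x_{i}=0$, where the hypothesis $h_{A}(0)=0$ kills the term; hence $F_{i}(x)=F_{j}(x)$ and $\Delta_{\mathcal{A}}^{P}$ is invariant. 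For the forward direction I would argue contrapositively: if $\mathcal{A}$ is not exo-balanced, there exist $i,j\in A$, a class $B\ne A$, and some arrow type $A_{0}\in\mathcal{M}$ with $d_{B}^{-}(i)\ne d_{B}^{-}(j)$ in that monochrome subgraph; then I construct a witnessing field by choosing $g=0$, $h_{A}=0$ for $A\ne A_{0}$, and a single smooth $h_{A_{0}}$ with $h_{A_{0}}(0)=0$ but $h_{A_{0}}(v_{0})\ne0$ for one specific displacement $v_{0}$, and a point $x\in\Delta_{\mathcal{A}}^{P}$ realizing that displacement across classes $A$ and $B$, so that $F_{i}(x)\ne F_{j}(x)$.

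The main obstacle is the forward direction: I must ensure the chosen $h_{A_{0}}$ does not accidentally cancel contributions from other classes $B'\ne B$ with unequal degrees, so the witness genuinely detects the failure at the single pair $(A,B)$. The cleanest route is to pick $x$ taking a distinct value on each class, choose $k\ge1$ large enough (or work in $k=1$) so the displacements $x_{\ell}-x_{i}$ for different classes are distinct nonzero vectors, and take $h_{A_{0}}$ supported near the one displacement of interest while vanishing near all others and at $0$; a bump function gives this. Verifying such an $h_{A_{0}}$ exists, and that it isolates exactly the offending class, is the only delicate point, and it parallels the construction underlying \cite[Theorem 3.13]{NSS6}, so I would lean on that technique.
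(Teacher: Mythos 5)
Your proposal is correct, but your argument for the harder direction (invariance for all such systems implies exo-balance) takes a genuinely different route from the paper's. The paper specializes to \emph{linear} couplings: for each $\tilde{A}\in\mathcal{M}$ it sets $g=0$, $h_{\tilde{A}}(x)=x$, and $h_{A}=0$ for $A\neq\tilde{A}$, so the system becomes the Laplacian system $\dot{x}=-L_{\tilde{A}}x$; invariance of $\Delta_{\mathcal{A}}^{P}$ for this system is exactly $L_{\tilde{A}}$-invariance of $\sys(\mathcal{A})$, and the paper then invokes \cite[Theorem 3.13]{NSS6} to conclude the exo-balanced condition for that monochrome subgraph. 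You instead argue contrapositively with a nonlinear witness: $g=0$, all $h_{A}$ zero except a bump function $h_{A_{0}}$ supported near the single displacement $x_{B}-x_{i}$, evaluated at a point of $\Delta_{\mathcal{A}}^{P}$ taking distinct values on distinct classes. Your construction works, and the delicate point you flag is handled correctly: with finitely many classes and distinct class values, the displacements are distinct vectors, $x_{B}-x_{i}\neq0$ because $B\neq A$, so a bump isolating it exists and the difference $F_{i}(x)-F_{j}(x)$ collapses to the single term $\bigl(d_{B}^{-}(i)-d_{B}^{-}(j)\bigr)h_{A_{0}}(x_{B}-x_{i})\neq0$. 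Comparing the two: the paper's route is shorter, reuses the known single-arrow-type result, and yields the slightly stronger fact that invariance under the linear difference-coupled systems alone already forces exo-balance; your route is self-contained, never needing the cited theorem, at the cost of the bump-function construction. Your easy direction (exo-balanced implies invariant), with the sum grouped by classes, off-diagonal terms killed by exo-balance and the diagonal term killed by $h_{A}(0)=0$, is precisely what the paper compresses into ``it follows easily.''
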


\begin{proof}
Assume $\mathcal{A}$ is exo-balanced. It follows easily that $\Delta_{\mathcal{A}}^{P}$
is invariant for the ODE. Conversely, assume $\Delta_{\mathcal{A}}^{P}$
is invariant for all such ODE systems. Thus $\Delta_{\mathcal{A}}^{P}$
is invariant for each of the systems, parameterized by $\tilde{A}\in\mathcal{M}$,
with $g$ the zero function and
\[
h_{A}(x):=\begin{cases}
0, & A\neq\tilde{A}\\
x, & A=\tilde{A}.
\end{cases}
\]
The invariance of $\Delta_{\mathcal{A}}^{P}$ implies that the exo-balanced
condition holds for the monochrome subgraph with adjacency matrix
$\tilde{A}$ as in \cite[Theorem 3.13]{NSS6}.
\end{proof}
Another class of systems considered in \cite{PecoraClusterSynch,SchaubClusterSynch}
has Lagrangian coupling. They considered only one arrow type with
a symmetric adjacency matrix, but the generalization to multiple arrow
types is straightforward.
\begin{prop}
Consider a coupled cell network with a single cell type and a set
of Laplacian matrices $\mathcal{M}$. A \textup{cell partition} $\mathcal{A}$
is exo-balanced if and only if ${\normalcolor \Delta_{\mathcal{A}}^{P}}$
is invariant for all coupled cell network systems of the form
\[
\dot{x}_{i}=g(x_{i})+\sum_{L\in\mathcal{M}}\sum_{j=1}^{n}L{}_{i,j}h_{L}(x_{j}),
\]
with smooth functions $g,h_{L}:\R^{k}\rightarrow\R^{k}$.
\end{prop}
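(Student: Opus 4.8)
The plan is to mirror the structure of the proof of Proposition~\ref{prop:nss6exoDigraphs}, since the statement has the same ``if and only if'' shape with Laplacian matrices replacing adjacency matrices. First I would prove the forward direction: assuming $\mathcal{A}$ is exo-balanced, I must show $\Delta_{\mathcal{A}}^{P}$ is invariant for every system of the stated Lagrangian-coupled form. The key point is that $\Delta_{\mathcal{A}}^{P}$ is invariant precisely when the vector field is tangent to it, i.e.\ whenever $x_i = x_j$ for all cells in a common class (so $x\in\Delta_{\mathcal{A}}^{P}$), the corresponding time derivatives agree: $\dot x_i = \dot x_j$ whenever $[i]=[j]$. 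For two cells $i,j$ in the same class $A$, the intrinsic terms $g(x_i)$ and $g(x_j)$ coincide on $\Delta_{\mathcal{A}}^{P}$, so I only need to compare the coupling sums $\sum_{L\in\mathcal{M}}\sum_{k} L_{i,k}h_L(x_k)$ and $\sum_{L\in\mathcal{M}}\sum_{k} L_{j,k}h_L(x_k)$.

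The crucial step is to rewrite each coupling sum by grouping the index $k$ according to its class $B\in\mathcal{A}$. On $\Delta_{\mathcal{A}}^{P}$ the value $x_k$ depends only on the class $[k]=B$, so I may write $x_k = y_B$ for a common representative value. Then, for a fixed Laplacian $L$,
\[
\sum_{k=1}^{n} L_{i,k}h_L(x_k) = \sum_{B\in\mathcal{A}}\Bigl(\sum_{k\in B} L_{i,k}\Bigr)h_L(y_B).
\]
Here I would use the defining property $\sum_k L_{i,k}=0$ of the Laplacian together with the identification of the off-diagonal entries with arrow counts: for $B\ne A=[i]$ we have $\sum_{k\in B}L_{i,k} = -\,d_B^{-}(i)$, while the $B=A$ block contributes the diagonal term plus the within-class off-diagonals. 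The diagonal entry $L_{i,i}=\sum_j A_{i,j}$ together with the row-sum-zero condition makes the $B=[i]$ contribution expressible purely in terms of the degrees $d_B^{-}(i)$ for $B\ne[i]$, so the full coupling sum for cell $i$ is determined by the collection $\{d_B^{-}(i) : B\ne[i]\}$. Since $\mathcal{A}$ is exo-balanced, these degrees satisfy $d_B^{-}(i)=d_B^{-}(j)$ for $i,j\in A$ and $B\ne A$, in every monochrome subgraph; hence the coupling sums for $i$ and $j$ agree, giving $\dot x_i=\dot x_j$ and thus invariance.

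For the converse I would follow the device used in the proof of Proposition~\ref{prop:nss6exoDigraphs}: assuming invariance for all such systems, specialize to a single Laplacian $\tilde L\in\mathcal{M}$ by taking $g\equiv 0$, $h_{\tilde L}=\mathrm{id}$, and $h_L\equiv 0$ for $L\ne\tilde L$. Invariance of $\Delta_{\mathcal{A}}^{P}$ for this particular system forces, for $x\in\Delta_{\mathcal{A}}^{P}$ with class-values $y_B$, the equality $\sum_{B}(\sum_{k\in B}\tilde L_{i,k})y_B = \sum_{B}(\sum_{k\in B}\tilde L_{j,k})y_B$ for all $i,j$ in a common class and all choices of the $y_B\in\R^{k}$. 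Treating the $y_B$ as independent variables, I would match coefficients of each $y_B$ to recover $\sum_{k\in B}\tilde L_{i,k}=\sum_{k\in B}\tilde L_{j,k}$, which (by the degree translation above) yields $d_B^{-}(i)=d_B^{-}(j)$ for all $B\ne[i]$ in the monochrome subgraph of $\tilde L$. Ranging over all $\tilde L\in\mathcal{M}$ gives the exo-balanced condition. The main obstacle I anticipate is the bookkeeping linking Laplacian row-sums to in-degrees, specifically handling the diagonal block $B=[i]$ correctly: one must verify that the within-class terms do not contribute any dependence beyond what the exo-balanced (as opposed to balanced) condition controls, and this is exactly where the row-sum-zero property of $L$ does the work, absorbing the $B=[i]$ block so that only the $B\ne[i]$ degrees remain relevant.
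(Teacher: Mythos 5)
Your proof is correct and takes essentially the same route as the paper: the paper's own proof is a one-liner citing Proposition~\ref{prop:Linv} together with $\sum_{j}L_{i,j}=0$ and leaving the details to the reader, and the details you supply---using row-sum-zero to absorb the diagonal block $B=[i]$ so that only the cross-class degrees $d_{B}^{-}(i)$, $B\ne[i]$, matter, and specializing to $g\equiv 0$, $h_{\tilde L}=\mathrm{id}$, $h_{L}\equiv 0$ for the converse---are exactly the intended ones. The only difference is presentational: you inline the content of Proposition~\ref{prop:Linv} (the translation between block row sums of $L$ and in-degrees) instead of citing it.
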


\begin{proof}
The result follows from Proposition~\ref{prop:Linv} and $\sum_{j}L_{i,j}=0$
for all $L\in\mathcal{M}$. We leave the details to the reader.
\end{proof}
Additional applications can be found in \cite{BonaccorsiOttavianoMugnoloPellegrini,Gambuzza}.

\subsection{Weighted cell network systems}

If $M$ is an arbitrary matrix, then we might not have a coupled cell
network whose adjacency matrix is $M$. Such a network is only available
if the matrix has non-negative integer entries. A weighted cell network
\cite{AguiarDiasWeighted,AguiarDiasFerreira} provides a model of
$M$-invariant synchrony subspaces without any restrictions on the
matrix $M$.

Formally, a \emph{weighted coupled cell network} is a coupled cell
network with only one vertex type and one arrow type, together with
a real number weight assigned to every arrow. The \emph{weighted adjacency
matrix} $W$ of the network is defined such that $W_{i,j}$ is the
weight on the arrow from cell $j$ to $i$. This weight is zero if
there is no arrow between these cells. Multiple arrows are not needed
because two arrows from cell $j$ to cell $i$ can be replaced by
a single arrow whose weight is the sum of those two weights. With
this definition, every square matrix is a weighted adjacency matrix
for some weighted cell network. One can define weighted cell networks
with multiple arrow types in the natural way, but we will not pursue
this complication here.

The \emph{in-degree} of a cell $i$ of a weighted cell network is
$d^{-}(i):=\sum_{j=1}^{n}W_{i,j}$.
\begin{defn}
\cite[Definition 2.2]{AguiarDiasFerreira} Consider a weighted coupled
cell network with weighted adjacency matrix $W$. A cell partition
$\mathcal{A}$ is \emph{balanced} if $d^{-}(i)=d^{-}(j)$ for all
$i,j\in A\in\mathcal{A}$ and $B\in\mathcal{A}$.
\end{defn}

The following is immediate from the definitions.
\begin{prop}
A partition $\mathcal{A}$ of a weighted coupled cell network is balanced
if and only if $\mathcal{A}$ is $W$-invariant.
\end{prop}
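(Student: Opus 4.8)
The plan is to reduce $W$-invariance to a single matrix identity and then read the balanced condition off the entries of that product. Write $P := P(\mathcal{A})$ for the characteristic matrix, so that $\sys(\mathcal{A}) = \Col(P)$. Since the columns of $P$ are exactly the characteristic vectors of the classes of $\mathcal{A}$, a vector lies in $\Col(P)$ precisely when it is constant on each class. Hence $\sys(\mathcal{A})$ is $W$-invariant if and only if every column of $WP$ lies in $\Col(P)$, i.e.\ if and only if each column of $WP$ is constant on the classes of $\mathcal{A}$. Equivalently, this is the condition $WP = PQ$ for some matrix $Q$, the quotient matrix of $W$ with respect to $\mathcal{A}$.

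Next I would compute the entries of $WP$. Taking the column of $P$ indexed by a class $B$ to be the characteristic vector of $B$, the $(i,B)$ entry of $WP$ is $\sum_{j\in B} W_{i,j}$, which is exactly the weighted relative in-degree $d_{B}^{-}(i)$. Thus the $B$-th column of $WP$ is constant on a class $A$ if and only if $d_{B}^{-}(i) = d_{B}^{-}(j)$ for all $i,j\in A$. Demanding this for every class $B$ is precisely the statement that $\mathcal{A}$ is balanced, and combining with the previous paragraph yields the desired equivalence.

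There is no genuine obstacle here: the argument is a direct translation between the column-space formulation of invariance and the entrywise in-degree condition, hinging only on the identity $(WP(\mathcal{A}))_{i,B} = d_{B}^{-}(i)$. The one point worth stating carefully is the characterization of $W$-invariance as ``each column of $WP$ is constant on classes,'' which follows immediately from $\sys(\mathcal{A}) = \Col(P(\mathcal{A}))$ together with the fact that membership in $\Col(P(\mathcal{A}))$ is equivalent to being constant on the classes of $\mathcal{A}$.
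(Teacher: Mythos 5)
Your proof is correct: the paper itself offers no argument (it states the proposition is ``immediate from the definitions''), and your write-up is exactly that immediate argument made explicit, using the same machinery the paper relies on elsewhere ($\sys(\mathcal{A})=\Col(P(\mathcal{A}))$, invariance as ``columns of $WP(\mathcal{A})$ constant on classes,'' and the identity $(WP(\mathcal{A}))_{i,B}=\sum_{j\in B}W_{i,j}=d_{B}^{-}(i)$, which is how the paper interprets $MP(\mathcal{A})$ in its cir algorithm). Note only that you correctly read the paper's (typo-ridden) definition of balanced as requiring equality of the class-relative weighted in-degrees $d_{B}^{-}$ for every class $B$, which is the intended meaning; under the literal reading with total in-degree $d^{-}$ the proposition would be false.
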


An \emph{admissible vector field with additive input structure }for
a weighted coupled cell network \cite{AguiarDiasWeighted} is of the
form
\[
\dot{x}_{i}=f(x_{i})+\sum_{j=1}^{n}W_{i,j}g(x_{i},x_{j}),
\]
where $f:\R^{k}\rightarrow\R^{k}$ and $g:\R^{k}\times\R^{k}\rightarrow\R^{k}$
are smooth.

The following result is the main reason why balanced partitions are
important for weighted cell networks.
\begin{prop}
\cite[Theorem 2.4]{AguiarDiasFerreira} \cite[Theorem 2.3]{AguiarDiasWeighted}
A cell partition of a weighted coupled cell network is polysynchronous
for all admissible vector fields with additive input structure and
a given phase space if and only if the partition is balanced.
\end{prop}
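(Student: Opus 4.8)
The plan is to reduce the invariance of the polydiagonal $\Delta_{\mathcal{A}}^{P}$ under each admissible field to a condition on the class-relative weighted in-degrees $d_{B}^{-}(i):=\sum_{j\in B}W_{i,j}$, which is exactly the balanced condition (equivalently, by the immediately preceding proposition, $W$-invariance of $\mathcal{A}$). First I would recall that $\Delta_{\mathcal{A}}^{P}$ is invariant under the ODE precisely when the field is tangent to it, i.e.\ when $\dot{x}_{i}=\dot{x}_{i'}$ for every $x\in\Delta_{\mathcal{A}}^{P}$ and every pair $i,i'$ with $[i]=[i']$. The key algebraic step is to evaluate the coupling term on the polydiagonal: if $x\in\Delta_{\mathcal{A}}^{P}$ then $x_{j}$ takes a common value $y_{B}$ for all $j$ in a class $B$, so grouping the sum by classes gives
\[
\sum_{j=1}^{n}W_{i,j}\,g(x_{i},x_{j})=\sum_{B\in\mathcal{A}}d_{B}^{-}(i)\,g(x_{i},y_{B}).
\]
Since $[i]=[i']$ forces $x_{i}=x_{i'}$ and hence $f(x_{i})=f(x_{i'})$ and $g(x_{i},y_{B})=g(x_{i'},y_{B})$, the tangency condition reduces to $\sum_{B}\bigl(d_{B}^{-}(i)-d_{B}^{-}(i')\bigr)g(x_{i},y_{B})=0$.

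For the forward implication, if $\mathcal{A}$ is balanced then $d_{B}^{-}(i)=d_{B}^{-}(i')$ for every class $B$, so the displayed difference vanishes identically for every choice of $f$ and $g$; hence $\Delta_{\mathcal{A}}^{P}$ is invariant for all admissible fields with additive input structure. This direction is a direct substitution and needs no special choice of the coupling functions.

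The converse is where the real content lies, and is the step I expect to be the main obstacle. Assuming invariance for all admissible fields, I must manufacture, for a fixed class $B_{0}$ and a fixed pair $i,i'$ in a common class, a single admissible field whose tangency equation isolates the coefficient $d_{B_{0}}^{-}(i)-d_{B_{0}}^{-}(i')$. Taking $f\equiv0$, I would evaluate on a point of $\Delta_{\mathcal{A}}^{P}$ whose class-values $y_{B}$ are chosen pairwise distinct (possible since the polydiagonal lets us assign the $y_{B}$ freely), and set $g(u,v):=\varphi(v)\,w$ with $w\neq0$ a fixed vector and $\varphi$ a smooth scalar bump equal to $1$ near $y_{B_{0}}$ and $0$ near each other $y_{B}$. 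Then $g(x_{i},y_{B})=\delta_{B,B_{0}}w$, the tangency equation collapses to $\bigl(d_{B_{0}}^{-}(i)-d_{B_{0}}^{-}(i')\bigr)w=0$, and since $w\neq0$ we get $d_{B_{0}}^{-}(i)=d_{B_{0}}^{-}(i')$. As $B_{0}$ and the pair $i,i'$ were arbitrary, $\mathcal{A}$ is balanced. The only delicate points are verifying that the phase space admits such smooth bump functions separating finitely many distinct points (immediate for $P_{i}=\R^{k}$) and that restricting to $f\equiv0$ and product-form $g$ still yields admissible fields; both are routine, so the substance of the proof is the separation construction that converts ``invariance for all fields'' into the pointwise balanced equalities.
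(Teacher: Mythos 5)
Your proof is correct, but note first what it is being compared against: the paper does not prove this proposition at all; it is imported by citation from the Aguiar--Dias papers, so there is no internal argument to measure yours against, and what you have written is essentially a reconstruction of the standard proof from that cited literature. Both of your directions are sound. The forward direction is the substitution you describe: on $\Delta_{\mathcal{A}}^{P}$ the coupling sum collapses to $\sum_{B\in\mathcal{A}}d_{B}^{-}(i)\,g(x_{i},y_{B})$, and balancedness gives $\dot{x}_{i}=\dot{x}_{i'}$ whenever $[i]=[i']$, i.e.\ tangency, hence flow-invariance by uniqueness of solutions. The converse via a smooth bump $\varphi$ separating freely chosen, pairwise distinct class values $y_{B}$ is also valid; the one standard fact you lean on, that flow-invariance of the closed linear subspace $\Delta_{\mathcal{A}}^{P}$ forces the field to be tangent to it at each of its points, follows because the difference quotient $(x(t)-x(0))/t$ stays in the subspace as $t\to0$. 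Within this paper's framework, however, there is a shorter converse available: take $f\equiv0$ and $g(u,v):=v$, so the admissible field is the linear system $\dot{x}_{i}=\sum_{j}W_{i,j}x_{j}$; applying invariance coordinatewise shows $W\sys(\mathcal{A})\subseteq\sys(\mathcal{A})$, i.e.\ $\mathcal{A}$ is $W$-invariant, which the proposition immediately preceding this one in the paper states is equivalent to being balanced. Your bump-function construction buys self-containedness (no appeal to that preceding proposition) and extracts the balance equalities one class $B_{0}$ at a time, while the linear choice of $g$ reduces the converse to a single line at the cost of routing through the $W$-invariance equivalence.
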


The following is defined in \cite[Remark 2.2]{AguiarDiasWeighted}
under the name external equitable.
\begin{defn}
Consider a weighted coupled cell network with weighted adjacency matrix
$W$. A cell partition $\mathcal{A}$ is \emph{exo-balanced} if $d^{-}(i)=d^{-}(j)$
for all $i,j\in A\in\mathcal{A}$ and $A\ne B\in\mathcal{A}$.
\end{defn}

The \emph{Laplacian matrix} of a weighted cell network is $L:=D-W$,
where $D$ is the diagonal matrix satisfying $D_{i,i}=d^{-}(i)$ .
The following is true in this setup as well.
\begin{prop}
A partition $\mathcal{A}$ of a weighted digraph is exo-balanced if
and only if $\mathcal{A}$ is $L$-invariant.
\end{prop}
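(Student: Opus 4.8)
The plan is to reduce $L$-invariance of $\mathcal{A}$ to a statement about the weighted relative in-degrees $d_B^-(i):=\sum_{j\in B}W_{i,j}$ (so that the total in-degree is $d^-(i)=\sum_{B\in\mathcal{A}}d_B^-(i)$), and to observe that exo-balance is precisely the condition $d_B^-(i)=d_B^-(j)$ for all $i,j\in A\in\mathcal{A}$ and every $B\ne A$. First I would parametrize $\sys(\mathcal{A})$: a vector $x$ lies in $\sys(\mathcal{A})$ exactly when it is constant on classes, so I write $x_k=v_{[k]}$ for scalars $v_B$, $B\in\mathcal{A}$, which range freely over $\R^{|\mathcal{A}|}$. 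Since $\sys(\mathcal{A})=\Col(P(\mathcal{A}))$, the partition $\mathcal{A}$ is $L$-invariant iff $Lx\in\sys(\mathcal{A})$ for every such $x$, that is, iff $(Lx)_i=(Lx)_j$ whenever $[i]=[j]$.

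Next I would compute $(Lx)_i$ for $x\in\sys(\mathcal{A})$. Using $L=D-W$ with $D_{i,i}=d^-(i)$ and grouping the sum $\sum_k W_{i,k}x_k$ by classes, one gets $(Wx)_i=\sum_{B}d_B^-(i)\,v_B$, hence $(Lx)_i=d^-(i)\,v_{[i]}-\sum_B d_B^-(i)\,v_B$. The key step is to invoke the zero–row–sum property $d^-(i)=\sum_B d_B^-(i)$ (equivalently $L\mathbf 1=0$), which rewrites this as $(Lx)_i=\sum_{B\ne[i]}d_B^-(i)\,(v_{[i]}-v_B)$: the diagonal class $B=[i]$ drops out. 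This cancellation is exactly the feature that will make the invariance condition involve only off-diagonal classes, and hence produce the exo-balanced rather than the balanced condition.

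Then, for $i,j$ in a common class $A$ (so $v_{[i]}=v_{[j]}=v_A$), I would subtract to obtain $(Lx)_i-(Lx)_j=\sum_{B\ne A}\bigl(d_B^-(i)-d_B^-(j)\bigr)(v_A-v_B)$. One direction is immediate: if $d_B^-(i)=d_B^-(j)$ for all $B\ne A$, every coefficient vanishes, so $Lx\in\sys(\mathcal{A})$ and $\mathcal{A}$ is $L$-invariant. For the converse I would exploit that the $v_B$ are free, so the differences $(v_A-v_B)_{B\ne A}$ can be prescribed arbitrarily (e.g. set $v_A=0$); thus $\sum_{B\ne A}c_B(v_A-v_B)\equiv0$ forces every $c_B=0$, giving $d_B^-(i)=d_B^-(j)$ for all $B\ne A$ and all $i,j\in A$, which is exactly exo-balance.

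The only place needing care, and the main obstacle, is this last independence argument: I must rule out cancellation between distinct classes, which amounts to the linear independence of the functionals $v\mapsto v_A-v_B$ ($B\ne A$). It may be cleanest to present the whole argument by mild analogy with the already-immediate weighted-balanced/$W$-invariant case, emphasizing that replacing $W$ by $D-W$ cancels the $B=[i]$ contribution and thereby weakens the constraint from all classes to the off-diagonal classes only.
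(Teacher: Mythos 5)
Your proof is correct. The paper actually gives no proof of this proposition at all---it is asserted as holding ``in this setup as well,'' i.e.\ as immediate from the definitions in analogy with the preceding balanced/$W$-invariant proposition---and your computation, with the row-sum cancellation $d^{-}(i)=\sum_{B}d_{B}^{-}(i)$ eliminating the $B=[i]$ term and the indicator-type choice of $v$ (one class set to $1$, all others to $0$) settling the converse's linear-independence point, is exactly the verification the paper leaves to the reader.
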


\begin{defn}
\cite[Remark 2.2]{AguiarDiasWeighted} Given a weighted cell network
$G$ with weighted adjacency matrix $W_{G}$, let $G_{L}$ to be the
weighted cell network with weights $W_{G_{L}}$ satisfying
\[
(W_{G_{L}})_{i,j}=\begin{cases}
(W_{G})_{i,j}, & i\ne j\\
d^{-}(i)-\sum_{j=1}^{n}(W_{G})_{i,j}, & i=j.
\end{cases}
\]
\end{defn}

Note that the Laplacian matrix of $G$ is the weighted adjacency matrix
of $G_{L}$. The following is an immediate consequence.
\begin{prop}
\label{prop:GL}The exo-balanced partitions of a weighted cell network
$G$ are the balanced partitions of $G_{L}$.
\end{prop}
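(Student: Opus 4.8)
The plan is to unwind the definitions and observe that the claim reduces to a direct comparison of two defining conditions, one on $G$ and one on $G_L$, using the fact established in the preceding remark that the Laplacian matrix $L = D - W_G$ of $G$ equals the weighted adjacency matrix $W_{G_L}$ of $G_L$.

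First I would recall that, by definition, a partition $\mathcal{A}$ is exo-balanced for $G$ precisely when $d^-(i) = d^-(j)$ for all $i,j \in A \in \mathcal{A}$ and all $A \ne B \in \mathcal{A}$, where the in-degree $d^-$ is computed from $W_G$. The key is that the exact same numerical condition is exactly the \emph{balanced} condition for the network $G_L$, where the relevant in-degree must be computed from the weighted adjacency matrix $W_{G_L}$ of $G_L$. So the entire proof hinges on checking that the off-diagonal-relative-to-a-class in-degrees for $G_L$ reproduce the defining quantities of the exo-balanced condition for $G$.

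Concretely, I would fix a class $B$ and compute the in-degree of a cell $i$ relative to $B$ in $G_L$, namely $\sum_{j \in B} (W_{G_L})_{i,j}$. When $i \notin B$ this sum involves only off-diagonal entries, and by the defining formula for $W_{G_L}$ these agree with the corresponding entries of $W_G$; hence the relative in-degree in $G_L$ coincides with the relative in-degree in $G$ for the case $A \ne B$ that is the only case appearing in the exo-balanced definition. Therefore the condition ``$d^-(i)$ relative to $B$ agrees across $i,j \in A$ for all $A \ne B$'' transfers verbatim between $G$ and $G_L$. Conversely, since $W_{G_L}$ is constructed so that each row sum (the total in-degree in $G_L$) equals the total in-degree $d^-(i)$ in $G$, one sees that the \emph{balanced} condition for $G_L$ is equivalent to the \emph{exo-balanced} condition for $G$, which is what we want.

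I do not expect a genuine obstacle here; the proof is essentially a bookkeeping argument that the off-diagonal weights are unchanged and the diagonal correction is precisely what converts the exo-balanced (off-class) condition into the full balanced (all-class) condition. The one point requiring a little care is the role of the diagonal entries $(W_{G_L})_{i,i} = d^-(i) - \sum_j (W_G)_{i,j}$: these are arranged so that the row sums of $W_{G_L}$ match those of $W_G$, which is exactly why the balanced and exo-balanced conditions line up rather than differing by a self-interaction term. Accordingly, I would keep the verification brief, noting that the result is an immediate consequence of the displayed definition of $W_{G_L}$ together with the equivalence between $L$-invariance and the exo-balanced condition, and leave the routine index-chasing to the reader.
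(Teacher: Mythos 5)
Your overall plan---a direct combinatorial comparison of the two defining conditions---is a legitimate alternative to the paper's argument, which instead just chains two earlier propositions (exo-balanced partitions of $G$ are exactly the $L_G$-invariant partitions; balanced partitions of a weighted network are exactly the partitions invariant under its weighted adjacency matrix) together with the identity $W_{G_L}=L_G$. However, your execution fails at precisely the step that carries the content of the proposition. Balancedness of $G_L$ requires equality of relative in-degrees for \emph{all} classes $B$, including $B=A=[i]$, while exo-balancedness of $G$ only constrains $B\ne A$; so the real work is showing that the within-class condition in $G_L$ is automatic. The mechanism that makes this true is that $W_{G_L}=L_G=D-W_G$ has all row sums equal to \emph{zero}, whence for $i\in A$
\[
\sum_{k\in A}(W_{G_L})_{i,k}=0-\sum_{B\ne A}\,\sum_{k\in B}(W_{G_L})_{i,k},
\]
and the right-hand side is constant as $i$ ranges over $A$ by exo-balancedness. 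You instead assert that the diagonal entries are ``arranged so that the row sums of $W_{G_L}$ match those of $W_G$,'' i.e.\ equal $d^-(i)$. That claim is false, and if it were true your argument would refute the proposition rather than prove it: $d^-$ need not be constant on a class of an exo-balanced partition (this is exactly how exo-balanced differs from balanced), so the within-class sums in $G_L$, being $d^-(i)$ minus a quantity constant on $A$, would then disagree. Concretely, in Figure~\ref{fig:weighted}, $\mathcal{A}_2=1|23$ is exo-balanced for $G$ although $d^-(2)=2\ne3=d^-(3)$; it is balanced for $G_L$ only because the rows of $L_G$ sum to $0$, giving within-class sums $0-(-2)=2$ for both cells $2$ and $3$.

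A secondary problem is that you took the displayed formula defining $W_{G_L}$ at face value: off-diagonal entries equal to those of $W_G$, and a diagonal formula that literally evaluates to $d^-(i)-\sum_{j}(W_G)_{i,j}=0$. That display is a misprint; the intended definition---the one used in the Note immediately preceding the proposition (``the Laplacian matrix of $G$ is the weighted adjacency matrix of $G_L$'') and in the worked example---is $W_{G_L}=L_G$, with off-diagonal entries $-(W_G)_{i,j}$ and diagonal entries $d^-(i)-(W_G)_{i,i}$. Under your literal reading, $W_{G_L}$ is just $W_G$ with its diagonal zeroed out, and the proposition is then simply false: in the example above, $1|23$ would fail to be balanced for that network. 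The off-diagonal sign flip itself is harmless for your $B\ne A$ case, since equalities of relative in-degrees are preserved under negation; so your proof can be repaired by (i) working with $W_{G_L}=L_G$ and (ii) replacing the row-sum claim by the observation that all row sums of $L_G$ vanish, after which your two-case verification goes through.
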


\begin{example}
\begin{figure}
\begin{tabular}{cccccc}
\begin{tabular}{c}
\includegraphics{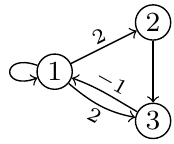}\tabularnewline
\end{tabular} & %
\begin{tabular}{c}
\includegraphics{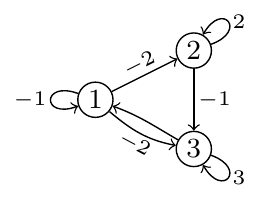}\tabularnewline
\end{tabular} & $\quad$ & %
\begin{tabular}{c}
$123$\tabularnewline
\end{tabular} & %
\begin{tabular}{c}
$1|23$\tabularnewline
\end{tabular} & %
\begin{tabular}{c}
$1|2|3$\tabularnewline
\end{tabular}\tabularnewline
$G$ & $G_{L}$ &  & $\mathcal{A}_{1}$ & $\mathcal{A}_{2}$ & $\mathcal{A}_{3}$\tabularnewline
(i) & (ii) &  & \multicolumn{3}{c}{(iii)}\tabularnewline
\end{tabular}

\caption{\label{fig:weighted}(i) Weighted cell network $G$. Weights of 1
are not printed. (ii) Weighted cell network $G_{L}$. (iii) Balanced
partitions of $G_{L}$ and exo-balanced partitions of $G$.}
\end{figure}
Figure~\ref{fig:weighted}(i) shows a weighted cell network $G$.
Its weighted adjacency, degree, and Laplacian matrices are 
\[
W_{G}=\left[\begin{smallmatrix}1 & 0 & -1\\
2 & 0 & 0\\
2 & 1 & 0
\end{smallmatrix}\right],\qquad D_{G}=\left[\begin{smallmatrix}0 & 0 & 0\\
0 & 2 & 0\\
0 & 0 & 3
\end{smallmatrix}\right],\qquad L_{G}=D_{G}-W_{G}=\left[\begin{smallmatrix}-1 & 0 & 1\\
-2 & 2 & 0\\
-2 & -1 & 3
\end{smallmatrix}\right].
\]
The weighted cell network $G_{L}$ whose adjacency matrix is $W_{G_{L}}=L_{G}$
is shown in Figure~\ref{fig:weighted}(ii). The balanced partitions
of $G_{L}$ are exactly the exo-balanced partitions of $G$. They
are shown in Figure~\ref{fig:weighted}(ii).
\end{example}

The following is an immediate consequence of the formula $L=D-W$
and Proposition~\ref{prop:GL}.
\begin{prop}
If the weighted cell network is regular, that is the in-degree of
every cell is the same, then the exo-balanced partitions are the same
as the balanced partitions.
\end{prop}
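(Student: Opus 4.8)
The plan is to reduce the whole statement to the elementary observation that shifting a matrix by a scalar multiple of the identity leaves its invariant subspaces unchanged. First I would record what the regularity hypothesis buys: if every cell has the same in-degree $d$, then the degree matrix is the scalar matrix $D = dI$, so the Laplacian becomes $L = D - W = dI - W$.

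Next I would invoke the two characterizations established just above, namely that the balanced partitions of the network are exactly the $W$-invariant partitions while the exo-balanced partitions are exactly the $L$-invariant partitions. (Equivalently, one could route through Proposition~\ref{prop:GL}, since in the regular case the weighted adjacency matrix of $G_L$ is $W_{G_L} = L = dI - W$, so the balanced partitions of $G_L$ are the $(dI-W)$-invariant ones.) Either way, it suffices to prove the equality of invariant-partition sets $\Pi_W(n) = \Pi_L(n)$; that is, that a synchrony subspace $U = \sys(\mathcal{A})$ is $W$-invariant if and only if it is $(dI - W)$-invariant.

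The core step is this equivalence of invariance. For any subspace $U \subseteq \R^n$ one has $(dI - W)U \subseteq U$ if and only if $WU \subseteq U$: since $dI$ maps $U$ into itself, for $u \in U$ the vector $(dI - W)u = du - Wu$ lies in $U$ exactly when $Wu$ does, using that $du \in U$ and that $U$ is closed under subtraction. Applying this to every $u \in U$ gives the claimed equivalence, hence $\Pi_W(n) = \Pi_L(n)$, and the two families of partitions coincide.

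I expect essentially no obstacle here; the only content is the scalar-shift remark, and the regularity hypothesis is precisely what turns $D$ into a scalar matrix so that $D$ restricts to the identity on every synchrony subspace. The single point requiring care is not to overreach: without $D = dI$ the matrix $D - W$ genuinely admits invariant subspaces different from those of $W$, which is exactly why exo-balanced and balanced partitions differ in the non-regular case, so the argument must use regularity and nothing weaker.
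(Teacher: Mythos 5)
Your proof is correct and takes essentially the same route as the paper, which derives the result as an immediate consequence of the formula $L=D-W$ and Proposition~\ref{prop:GL}: under regularity this is precisely your observation that $L=dI-W$ and that shifting by a scalar multiple of the identity preserves invariant subspaces. You merely spell out explicitly the scalar-shift equivalence ($WU\subseteq U$ iff $(dI-W)U\subseteq U$) that the paper leaves to the reader.
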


\subsection{Finite difference method for PDE}

The current study of $M$-invariant partitions was motivated by work
where finite difference approximations to PDEs gave invariant subspaces
beyond those forced by symmetry \cite{NSS,NSS2,NSS3,NSS5}. Invariant
subspaces are crucial to analyzing bifurcations and also reduce the
dimension of the computations. The region of the PDE is approximated
by a grid of points, and a graph is obtained by joining nearest neighbors
with an edge. The matrix that approximates the Laplacian operator
with boundary conditions on the region is used in the finite difference
method. The graph Laplacian matrix gives the approximation for the
Laplacian operator with zero Neumann boundary conditions \cite{NSS},
so the synchrony subspaces of exo-balanced partitions are invariant.
For other boundary conditions, the subspaces that are invariant under
the matrix approximating the Laplacian include the synchrony subspaces
of balanced partitions.

\subsection{\label{subsec:controllability}Network controllability}

Network controllability concerns the possibility to transfer the state
of a dynamical system with external controls from any given initial
state to any final desired state in finite time \cite{AguilarBahman,Liu2,Liu,ZhangCamlibelCao}.
Such a network system is of the form
\[
\dot{x}=-Mx+Bu,
\]
where the matrix $M$ describes the coupling between the components
of the network, often the adjacency or Laplacian matrix, the matrix
$B$ encodes the application of the external controls $u$, and $x$
encodes the state of the network.

Invariant subspaces of the state space are an obstruction to controllability,
since a target state in an invariant subspace cannot be reached in
finite time  from an initial state not in that invariant subspace.
For example, the invariant subspaces forced by symmetry are an obstacle
toward the controllability of networks if the matrix $B$ is chosen
in a way that preserves the invariant subspace. Similarly, an initial
state in an invariant subspace cannot reach a target state that is
not in the subspace.

It has been shown that symmetries are not necessary for uncontrollability
\cite{rahmani}. Building on this work, \cite[Theorem 3]{AguilarBahman}
showed that if $M$ is the Laplacian of the network and $B$ is chosen
to respect a nontrivial almost equitable partition, then the network
is uncontrollable. They observed that almost equitable partitions
appear frequently in many real-world networks and account for many
of the uncontrollable leader selections that do not come from symmetry.
A result of \cite{ZhangMingKanat} provides an upper bound for the
controllable subspace of networks in terms of maximal almost equitable
partitions. They give an algorithm for finding such partitions given
a set of singleton leaders. Our cir algorithm is a generalization
of their algorithm with the addition that we allow invariance with
respect to multiple, arbitrary matrices.

Further background and references for the subject of network controllability,
particularly as pertains to synchrony and almost equitable partitions,
can be found in \cite{AguilarBahman1,egerstedt,egerstedt2,SchaubClusterSynch}.

\section{Induced partitions}

In this section we develop some preliminary tools. We use the notation
$\left[\!\begin{array}{c|c}
Q & R\end{array}\!\right]$ for the augmented matrix built from the matrices $Q$ and $R$. Given
a matrix $Q\in\mathbb{R}^{n\times k}$, \cite{ZhangMingKanat} constructs
a partition $\psi(Q)$ in $\Pi(n)$ where $i$ and $j$ are in the
same class exactly when the $i$-th and $j$-th rows of $Q$ are equal.
Following \cite[Section 7]{Godsil}, we call $\psi(Q)$ the \emph{partition
induced} by the rows of $Q$.

The following are simple properties of induced partitions.
\begin{prop}
\label{prop:psiProps}The following hold for all $\mathcal{A}\in\Pi(n)$
and $Q,R\in\mathbb{R}^{n\times k}$.
\begin{enumerate}
\item $\psi(P(\mathcal{A}))=\mathcal{A}$;
\item $\psi(\left[\!\begin{array}{c|c}
Q & R\end{array}\!\right])=\psi(Q)\wedge\psi(R)$;
\item $\Col(Q)\subseteq\Col(R)$ implies $\psi(Q)\ge\psi(R)$.
\end{enumerate}
\end{prop}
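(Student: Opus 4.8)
The plan is to prove the three parts directly from the definition of the induced partition $\psi$, namely that $i$ and $j$ lie in the same class of $\psi(Q)$ exactly when rows $i$ and $j$ of $Q$ are equal. Each part is a short unwinding of this definition together with the observation that equality of rows is what governs membership.

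For part (1), I would recall that the rows of the characteristic matrix $P(\mathcal{A})$ are precisely the transposed coordinate vectors $e_{c_i}^{T}$, where $c_i$ is the color of $i$; that is, row $i$ of $P(\mathcal{A})$ is the standard basis row vector with a $1$ in position $c_i$. Then row $i$ equals row $j$ if and only if $c_i = c_j$, which is exactly the condition $[i]_{\mathcal{A}} = [j]_{\mathcal{A}}$. Hence $\psi(P(\mathcal{A}))$ has the same classes as $\mathcal{A}$. For part (2), I would note that a row of the augmented matrix $\left[\!\begin{array}{c|c} Q & R\end{array}\!\right]$ is the concatenation of the corresponding rows of $Q$ and $R$, so two such rows are equal if and only if the $Q$-rows agree \emph{and} the $R$-rows agree. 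Thus $i,j$ are in the same class of $\psi(\left[\!\begin{array}{c|c} Q & R\end{array}\!\right])$ iff they are in the same class of both $\psi(Q)$ and $\psi(R)$, which is exactly the meet $\psi(Q)\wedge\psi(R)$ (the common refinement).

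Part (3) is the one requiring the most care, so I expect it to be the main obstacle, though it is still elementary. The claim is that $\Col(Q)\subseteq\Col(R)$ implies $\psi(Q)\ge\psi(R)$, i.e.\ that $\psi(R)$ is a refinement of $\psi(Q)$: whenever two rows of $R$ are equal, the corresponding rows of $Q$ must also be equal. The key observation is that the map sending a row index to its row of $Q$ factors through the column space: if $\Col(Q)\subseteq\Col(R)$, then each column of $Q$ is a linear combination of the columns of $R$, so $Q = RC$ for some matrix $C\in\mathbb{R}^{k\times k}$. Consequently row $i$ of $Q$ equals (row $i$ of $R$) times $C$. If rows $i$ and $j$ of $R$ coincide, then rows $i$ and $j$ of $Q$, being the same linear image, also coincide. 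Therefore $[i]_{\psi(R)} = [j]_{\psi(R)}$ implies $[i]_{\psi(Q)} = [j]_{\psi(Q)}$, which is precisely the statement that $\psi(Q)$ is coarser than $\psi(R)$, i.e.\ $\psi(Q)\ge\psi(R)$.

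The only subtlety to flag is the direction of the inequality: coarser means larger in the partition lattice, and $\psi(R)$ refining $\psi(Q)$ gives $\psi(Q)\ge\psi(R)$, consistent with the order conventions fixed in Section~2. I would write the factorization $Q = RC$ explicitly (obtained by expressing each column of $Q$ in terms of the columns of $R$) since it is the one genuinely non-formal step, and then read off the row-equality implication from it.
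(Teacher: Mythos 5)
Your proposal is correct and follows essentially the same route as the paper: parts (1) and (2) are direct unwindings of the definition of $\psi$, and part (3) rests on the observation that $\Col(Q)\subseteq\Col(R)$ makes every column of $Q$ a linear combination of the columns of $R$ (your factorization $Q=RC$ is just this statement written explicitly), so equal rows of $R$ force equal rows of $Q$. The only difference is that you spell out details the paper dismisses as clear from the definitions, and your handling of the order direction ($\psi(R)$ refines $\psi(Q)$, hence $\psi(Q)\ge\psi(R)$) matches the paper's conventions.
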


\begin{proof}
Statements (1) and (2) are clear from the definitions. To show Statement~(3)
assume $\Col(Q)\subseteq\Col(R)$. Then every column of $Q$ is a
linear combination of the columns of $R$. So equal rows in $R$ correspond
to equal rows in $Q$. Hence equivalent elements in $\psi(R)$ are
also equivalent in $\psi(Q)$.
\end{proof}
\begin{lem}
\label{lem:augment}If $\mathcal{A}\le\psi(Q)$ then $\Col(Q)\subseteq\sys(\mathcal{A})$.
\end{lem}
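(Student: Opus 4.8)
The plan is to unwind the definitions of $\psi$, $P(\mathcal{A})$, and $\sys(\mathcal{A})$ and reduce the claim to a statement about the rows of $Q$. The hypothesis $\mathcal{A}\le\psi(Q)$ means, by the characterization of the partition order in terms of classes, that every class of $\mathcal{A}$ is contained in a class of $\psi(Q)$; equivalently, $[i]_{\mathcal{A}}=[j]_{\mathcal{A}}$ implies $[i]_{\psi(Q)}=[j]_{\psi(Q)}$. By the definition of the induced partition $\psi(Q)$, the latter says precisely that rows $i$ and $j$ of $Q$ are equal. So the hypothesis translates directly into: whenever $i$ and $j$ are in the same class of $\mathcal{A}$, the corresponding rows of $Q$ coincide.

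Next I would examine what it means for a vector to lie in $\Col(Q)$. A typical element of $\Col(Q)$ is $Qy$ for some $y\in\mathbb{R}^{k}$, and its $i$-th component is the $i$-th row of $Q$ dotted with $y$. From the previous step, if $[i]_{\mathcal{A}}=[j]_{\mathcal{A}}$ then rows $i$ and $j$ of $Q$ are identical, hence the $i$-th and $j$-th components of $Qy$ are equal. This is exactly the defining condition for $Qy$ to belong to $\sys(\mathcal{A})=\{(x_{1},\ldots,x_{n})\mid x_{i}=x_{j}\text{ if }[i]=[j]\}$. Since $y$ was arbitrary, every column of $Q$ (and indeed every vector in $\Col(Q)$) lies in $\sys(\mathcal{A})$, giving $\Col(Q)\subseteq\sys(\mathcal{A})$.

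Alternatively, and perhaps more cleanly, I could package this using the tools already established. By Proposition~\ref{prop:psiProps}(1) we have $\psi(P(\mathcal{A}))=\mathcal{A}$, and the order-embedding property of $\sys$ (and the characterization $\sys(\mathcal{B})=\Col(P(\mathcal{B}))$) lets one compare column spaces with synchrony subspaces. However, Proposition~\ref{prop:psiProps}(3) runs in the direction $\Col(Q)\subseteq\Col(R)\Rightarrow\psi(Q)\ge\psi(R)$, which is the converse implication to what is needed here, so invoking it directly would require an additional argument; I expect the direct row-based computation above to be the shortest route.

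There is essentially no hard obstacle in this lemma: the only subtlety is keeping the two equivalent formulations of $\mathcal{A}\le\psi(Q)$ straight—the ``class containment'' form versus the ``$[i]=[j]\Rightarrow\cdots$'' form—and correctly reading off that $[i]_{\psi(Q)}=[j]_{\psi(Q)}$ is synonymous with equality of the $i$-th and $j$-th rows of $Q$. Once that translation is in hand, the inclusion of column space into synchrony subspace is immediate, so I anticipate the proof will be only a couple of sentences.
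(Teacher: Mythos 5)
Your proof is correct, but it takes a genuinely different route from the paper's. You argue directly from the definitions: the hypothesis $\mathcal{A}\le\psi(Q)$ says that $[i]_{\mathcal{A}}=[j]_{\mathcal{A}}$ forces rows $i$ and $j$ of $Q$ to coincide, hence for any $y$ the components of $Qy$ satisfy $(Qy)_{i}=(Qy)_{j}$ whenever $[i]_{\mathcal{A}}=[j]_{\mathcal{A}}$, which is precisely membership in $\sys(\mathcal{A})$. The paper instead stays inside the column-space framework: it notes $\psi(P(\mathcal{A}))=\mathcal{A}\le\psi(Q)$, so equal rows of $P(\mathcal{A})$ correspond to equal rows of $Q$, and then argues that the reduced row echelon form of the augmented matrix $\left[\!\begin{array}{c|c}P(\mathcal{A}) & Q\end{array}\!\right]$ has no leading columns in the augmented part (Gauss--Jordan needs only row swaps and the replacement of repeated rows by zero rows), giving $\Col(Q)\subseteq\Col(P(\mathcal{A}))=\sys(\mathcal{A})$. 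Your version is shorter and avoids the paper's somewhat informal row-reduction claim; the paper's version has the advantage of matching the computational viewpoint used throughout (its algorithms manipulate exactly such augmented matrices, and the example immediately following the lemma illustrates the rref computation). You are also right that Proposition~\ref{prop:psiProps}(3) points the wrong way here and cannot be invoked directly: its converse fails for general matrices (two matrices with all rows distinct induce the same discrete partition but can have unrelated column spaces), so the lemma genuinely uses that one of the two matrices is a characteristic matrix $P(\mathcal{A})$ — which is exactly what both your argument and the paper's exploit.
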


\begin{proof}
Proposition~\ref{prop:psiProps}(1) implies that $\psi(P(\mathcal{A}))=\mathcal{A}\le\psi(Q)$.
Hence equal rows in $P(\mathcal{A})$ correspond to equal rows in
$Q$. We show that $\Col(Q)\subseteq\Col(P(\mathcal{A}))$. It suffices
to check that the reduced row echelon form of $\left[\!\begin{array}{c|c}
P(\mathcal{A}) & Q\end{array}\!\right]$ does not have any leading columns in the augmented part. This is
true because Gauss-Jordan reduction of the augmented matrix can be
performed using only row swaps and the replacement of repeated rows
with zero rows. During these steps, every eliminated row in $P(\mathcal{A})$
is also eliminated in $Q$.
\end{proof}
\begin{example}
To demonstrate the proof of the previous result, consider $\mathcal{A}=13|2|4$
and 
\[
P(\mathcal{A})=\left[\begin{smallmatrix}1 & 0 & 0\\
0 & 1 & 0\\
1 & 0 & 0\\
0 & 0 & 1
\end{smallmatrix}\right],\qquad Q=\left[\begin{smallmatrix}5 & 6\\
7 & 8\\
5 & 6\\
7 & 8
\end{smallmatrix}\right],
\]
so that $\mathcal{A}\le13|24=\psi(Q)$. The reduced row echelon form
\[
\left[\begin{smallmatrix}1 & 0 & 0 & \vv & 5 & 6\\
0 & 1 & 0 & \vv & 7 & 8\\
0 & 0 & 1 & \vv & 7 & 8\\
0 & 0 & 0 & \vv & 0 & 0
\end{smallmatrix}\right]
\]
of $\left[P(\mathcal{A})\mid Q\right]$ has leading columns in the
first three columns. So $\Col(Q)\subseteq\Col(P(\mathcal{A}))=\sys(\mathcal{A})$.
\end{example}

\begin{prop}
\label{prop:psiTactical}Let $\mathcal{M}=\{M_{1},\ldots,M_{r}\}\subseteq\mathbb{R}^{m\times n}$,
$\mathcal{A}\in\Pi(m)$, and $\mathcal{B}\in\Pi(n)$. Then $M_{l}\sys(\mathcal{B})\subseteq\sys(\mathcal{A})$
for all $l$ if and only if 
\[
\mathcal{A}\le\psi(\left[\!\begin{array}{c|c|c}
M_{1}P(\mathcal{B}) & \cdots & M_{r}P(\mathcal{B})\end{array}\!\right]).
\]
\end{prop}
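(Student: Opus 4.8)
The plan is to reduce the claimed equivalence to a chain of iff-statements that exploit the already-established dictionary between partitions, synchrony subspaces, and induced partitions. The central objects are the blocks $M_l P(\mathcal{B})$, whose column spaces are exactly the images $M_l\sys(\mathcal{B})$, since $\sys(\mathcal{B})=\Col(P(\mathcal{B}))$ by the proposition identifying synchrony subspaces with column spaces of characteristic matrices.

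First I would observe that the condition $M_l\sys(\mathcal{B})\subseteq\sys(\mathcal{A})$ for all $l$ is equivalent to the single containment $\Col(N)\subseteq\sys(\mathcal{A})$, where $N:=\left[\!\begin{array}{c|c|c}M_1 P(\mathcal{B}) & \cdots & M_r P(\mathcal{B})\end{array}\!\right]$ is the horizontally augmented matrix. This is because the column space of a horizontal concatenation is the sum of the individual column spaces, and a sum of subspaces is contained in $\sys(\mathcal{A})$ precisely when each summand is; equivalently, $\Col(N)=\sum_l \Col(M_l P(\mathcal{B}))=\sum_l M_l\sys(\mathcal{B})$.

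Next I would convert the containment $\Col(N)\subseteq\sys(\mathcal{A})=\Col(P(\mathcal{A}))$ into a partition inequality. The easy direction of the equivalence $\mathcal{A}\le\psi(N)$ comes directly from Proposition~\ref{prop:psiProps}(3): if $\Col(N)\subseteq\Col(P(\mathcal{A}))$, then $\psi(N)\ge\psi(P(\mathcal{A}))=\mathcal{A}$, using part~(1) of that proposition to identify $\psi(P(\mathcal{A}))$ with $\mathcal{A}$. The converse direction—that $\mathcal{A}\le\psi(N)$ forces $\Col(N)\subseteq\sys(\mathcal{A})$—is exactly the content of Lemma~\ref{lem:augment} applied with $Q=N$.

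The structure is therefore a two-step equivalence, and I expect the only genuine subtlety to be bookkeeping with the augmented matrix: one must be careful that the index $l$ ranges correctly and that $N$ lives in $\R^{m\times rk}$ where $\mathcal{B}\in\Pi(n)$ gives $P(\mathcal{B})\in\R^{n\times|\mathcal{B}|}$ and $M_l\in\R^{m\times n}$, so that $\psi(N)\in\Pi(m)$ is comparable with $\mathcal{A}\in\Pi(m)$. No deep new idea is needed; the real work was already done in Lemma~\ref{lem:augment}, and the proposition is essentially its repackaging for a family of matrices acting between spaces of possibly different dimensions. Thus the only thing that could go wrong is a dimension mismatch, and I would verify at the outset that all synchrony subspaces and induced partitions are taken in the correct ambient space so that the cited results apply verbatim.
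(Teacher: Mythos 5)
Your proposal is correct and follows essentially the same route as the paper's proof: both directions reduce to column-space containments, with Proposition~\ref{prop:psiProps}(3) and (1) handling the forward implication and Lemma~\ref{lem:augment} handling the converse. The only cosmetic difference is that you apply Lemma~\ref{lem:augment} once to the full augmented matrix (using that its column space is the sum of the blocks' column spaces), whereas the paper splits $\psi$ of the augmented matrix via Proposition~\ref{prop:psiProps}(2) and applies the lemma block by block.
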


\begin{proof}
First assume that $M_{l}\sys(\mathcal{B})\subseteq\sys(\mathcal{A})$
for all $l$. Then 
\[
\Col(M_{l}P(\mathcal{B}))=M_{l}\Col(P(\mathcal{B}))\subseteq\Col(P(\mathcal{A}))
\]
for all $l$. Proposition~\ref{prop:psiProps}(3) and (1) imply 
\[
\psi(\left[\!\begin{array}{c|c|c}
M_{1}P(\mathcal{B}) & \cdots & M_{r}P(\mathcal{B})\end{array}\!\right])\ge\psi(P(\mathcal{A}))=\mathcal{A}.
\]

Now assume that $\mathcal{A}\le\psi(\left[\!\begin{array}{c|c|c}
M_{1}P(\mathcal{B}) & \cdots & M_{r}P(\mathcal{B})\end{array}\!\right])$. Then $\mathcal{A}\le\psi(M_{l}P(\mathcal{B}))$ by Proposition~\ref{prop:psiProps}(2).
Hence $\Col(M_{l}P(\mathcal{B}))\subseteq\sys(\mathcal{A})$ by Lemma~\ref{lem:augment}.
\end{proof}
The following result is the special case of Proposition~\ref{prop:psiTactical}
when $m=n$ and $\mathcal{B}=\mathcal{A}$.
\begin{cor}
\label{cor:psiM}Let $\mathcal{M}=\{M_{1},\ldots,M_{r}\}\subseteq\mathbb{R}^{n\times n}$.
Then $\mathcal{A}\in\Pi_{\mathcal{M}}$ if and only if 
\[
\mathcal{A}\le\psi(\left[\!\begin{array}{c|c|c}
M_{1}P(\mathcal{A}) & \cdots & M_{r}P(\mathcal{A})\end{array}\!\right]).
\]
\end{cor}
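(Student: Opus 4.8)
The plan is to obtain Corollary~\ref{cor:psiM} directly by specializing Proposition~\ref{prop:psiTactical}. Recall that the definition of $\mathcal{M}$-invariance for $\mathcal{A}\in\Pi(n)$ is precisely the condition $M_{l}\sys(\mathcal{A})\subseteq\sys(\mathcal{A})$ for every $M_{l}\in\mathcal{M}$. So the statement we must prove is logically the same biconditional as Proposition~\ref{prop:psiTactical}, restricted to the diagonal case.

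First I would note that Proposition~\ref{prop:psiTactical} is stated for matrices $M_{l}\in\mathbb{R}^{m\times n}$ and two partitions $\mathcal{A}\in\Pi(m)$, $\mathcal{B}\in\Pi(n)$, giving the equivalence
\[
M_{l}\sys(\mathcal{B})\subseteq\sys(\mathcal{A})\text{ for all }l\quad\Longleftrightarrow\quad\mathcal{A}\le\psi(\left[\!\begin{array}{c|c|c}M_{1}P(\mathcal{B}) & \cdots & M_{r}P(\mathcal{B})\end{array}\!\right]).
\]
I would then set $m=n$ and $\mathcal{B}=\mathcal{A}$. With these substitutions the left-hand side becomes $M_{l}\sys(\mathcal{A})\subseteq\sys(\mathcal{A})$ for all $l$, which is exactly the defining condition for $\mathcal{A}\in\Pi_{\mathcal{M}}$, and the right-hand side becomes $\mathcal{A}\le\psi(\left[\!\begin{array}{c|c|c}M_{1}P(\mathcal{A}) & \cdots & M_{r}P(\mathcal{A})\end{array}\!\right])$, which is the claimed inequality. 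This yields the corollary.

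There is essentially no obstacle here, since the corollary is a textbook specialization of the preceding proposition. The only point deserving a line of care is to confirm that the hypotheses of Proposition~\ref{prop:psiTactical} remain satisfied after the substitution: one needs $\mathcal{M}\subseteq\mathbb{R}^{n\times n}$ (so $m=n$ is legitimate), and one needs $\mathcal{A}\in\Pi(n)$ to serve simultaneously in the roles of both $\mathcal{A}\in\Pi(m)$ and $\mathcal{B}\in\Pi(n)$, which is consistent once $m=n$. I would therefore write the proof as a single sentence invoking Proposition~\ref{prop:psiTactical} with $m=n$ and $\mathcal{B}=\mathcal{A}$, together with the observation that $M_{l}\sys(\mathcal{A})\subseteq\sys(\mathcal{A})$ for all $l$ is by definition the assertion that $\mathcal{A}\in\Pi_{\mathcal{M}}$.
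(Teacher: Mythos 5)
Your proposal is correct and matches the paper exactly: the paper introduces this corollary with the remark that it is "the special case of Proposition~\ref{prop:psiTactical} when $m=n$ and $\mathcal{B}=\mathcal{A}$," which is precisely your argument, including the observation that $M_{l}\sys(\mathcal{A})\subseteq\sys(\mathcal{A})$ for all $l$ is the definition of $\mathcal{A}\in\Pi_{\mathcal{M}}$. Nothing further is needed.
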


\section{Coarsest invariant refinement}

In this section we develop an algorithm that finds the coarsest invariant
partition that is not larger than a given partition.
\begin{prop}
If $\mathcal{M}\subseteq\mathbb{R}^{n\times n}$ and $\mathcal{A}\in\Pi(n)$,
then $\bigvee(\Pi_{\mathcal{M}}(n)\cap\downarrow\mathcal{A})$ is
in $\Pi_{\mathcal{M}}(n)$.
\end{prop}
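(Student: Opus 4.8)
The plan is to read this off Proposition~\ref{prop:supInvariant} with almost no extra work. Write $S := \Pi_{\mathcal{M}}(n)\cap\downarrow\mathcal{A}$ for the set of $\mathcal{M}$-invariant partitions that refine $\mathcal{A}$. By construction $S\subseteq\Pi_{\mathcal{M}}(n)$, so the only hypothesis of Proposition~\ref{prop:supInvariant} that still needs checking is that $S$ is nonempty; once that is in hand, the proposition delivers $\bigvee S\in\Pi_{\mathcal{M}}(n)$ immediately.

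First I would exhibit a concrete element of $S$. The natural candidate is the discrete partition $1|2|\cdots|n$, whose synchrony subspace is all of $\mathbb{R}^n$ (the defining condition $x_i=x_j$ when $[i]=[j]$ is vacuous). Since $M\mathbb{R}^n\subseteq\mathbb{R}^n$ for every $M\in\mathcal{M}$, this partition is $\mathcal{M}$-invariant; equivalently, it is the bottom element of $\Pi_{\mathcal{M}}(n)$, exactly as noted in the proof that $\Pi_{\mathcal{M}}(n)$ is a lattice. Being the bottom element of $\Pi(n)$ as well, it lies in $\downarrow\mathcal{A}$ for every $\mathcal{A}$. Hence it belongs to $S$, so $S\neq\emptyset$ and Proposition~\ref{prop:supInvariant} applies, giving $\bigvee S\in\Pi_{\mathcal{M}}(n)$.

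There is essentially no hard step here: the entire content beyond invoking Proposition~\ref{prop:supInvariant} is the nonemptiness of $S$, which the discrete partition settles. Two minor points are worth flagging for cleanliness. The $\bigvee$ in the statement is the join in $\Pi(n)$, and one should recall (as in Example~\ref{exa:sublattice}) that the join coincides in $\Pi(n)$ and $\Pi_{\mathcal{M}}(n)$, so no ambiguity arises. Moreover, every element of $S$ is $\le\mathcal{A}$, so $\mathcal{A}$ is an upper bound for $S$ and therefore $\bigvee S\le\mathcal{A}$; thus $\bigvee S$ actually lands back in $S$ as its greatest element, which identifies it as the \emph{coarsest} $\mathcal{M}$-invariant refinement of $\mathcal{A}$ that this section sets out to compute.
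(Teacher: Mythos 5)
Your proof is correct and takes essentially the same route as the paper: the paper simply inlines the argument of Proposition~\ref{prop:supInvariant}, using Proposition~\ref{prop:vee} to write $\sys(\bigvee(\Pi_{\mathcal{M}}(n)\cap\downarrow\mathcal{A}))$ as an intersection of $\mathcal{M}$-invariant subspaces, whereas you invoke Proposition~\ref{prop:supInvariant} as a black box and supply the one hypothesis it needs, nonemptiness, via the discrete partition. Nothing is missing.
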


\begin{proof}
Note that the down-set $\downarrow\mathcal{A}$ is taken in $\Pi(n)$.
Proposition~\ref{prop:vee} implies that 
\[
\begin{aligned}\sys(\bigvee(\Pi_{\mathcal{M}}(n)\cap\downarrow\mathcal{A})) & =\bigcap\{\sys(\mathcal{B})\mid\mathcal{B}\in\Pi_{\mathcal{M}}(n)\cap\downarrow\mathcal{A}\}\end{aligned}
\]
is the intersection of $\mathcal{M}$-invariant subspaces. Hence $\sys(\bigvee(\Pi_{\mathcal{M}}\cap\downarrow\mathcal{A}))$
is also $\mathcal{M}$-invariant.
\end{proof}
\begin{defn}
We define $\cip_{\mathcal{M}}:\Pi(n)\to\Pi_{\mathcal{M}}(n)$ where
$\cip_{\mathcal{M}}(\mathcal{A}):=\bigvee(\Pi_{\mathcal{M}}(n)\cap\downarrow\mathcal{A})$
is the coarsest $\mathcal{M}$-invariant partition that is not larger
than $\mathcal{A}$. We refer to $\cip_{\mathcal{M}}(\mathcal{A})$
as the \emph{coarsest invariant refinement} of $\mathcal{A}$. 
\end{defn}

The following is a recursive process for finding $\cip_{\mathcal{M}}(\mathcal{A})$
for any $\mathcal{A}$ and for a set $\mathcal{M}$ of matrices. We
refer to it as the $\cip$ algorithm. It is a generalization of the
algorithm in \cite[Theorem 5]{ZhangMingKanat}. Their algorithm finds
$\cip_{M}(\mathcal{A})$ for a single matrix $M$, applied to partitions
$\mathcal{A}$ determined by leader sets.
\begin{prop}
\label{prop:cip}Let $\mathcal{M}=\{M_{1},\ldots,M_{m}\}\subseteq\mathbb{R}^{n\times n}$.
Given a partition $\mathcal{A}_{0}:=\mathcal{A}\in\Pi(n)$, recursively
define $\mathcal{A}_{k+1}:=\psi(\left[\!\begin{array}{c|c}
P(\mathcal{A}_{k}) & Q_{k}\end{array}\!\right])$, where
\[
Q_{k}:=\left[\!\begin{array}{c|c|c}
M_{1}P(\mathcal{A}_{k}) & \cdots & M_{m}P(\mathcal{A}_{k})\end{array}\!\right].
\]
There is a $k_{0}$ such that $\mathcal{A}_{k}=\cip_{\mathcal{M}}(\mathcal{A})$
for all $k\ge k_{0}$.
\end{prop}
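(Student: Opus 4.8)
The plan is to show two things: first, that the recursively defined sequence $\mathcal{A}_0 \ge \mathcal{A}_1 \ge \mathcal{A}_2 \ge \cdots$ is decreasing and must stabilize; and second, that its stable value equals $\cip_{\mathcal{M}}(\mathcal{A})$. For the monotonicity, observe that $\mathcal{A}_{k+1} = \psi(\left[\!\begin{array}{c|c} P(\mathcal{A}_k) & Q_k\end{array}\!\right]) = \psi(P(\mathcal{A}_k)) \wedge \psi(Q_k)$ by Proposition~\ref{prop:psiProps}(2), and $\psi(P(\mathcal{A}_k)) = \mathcal{A}_k$ by Proposition~\ref{prop:psiProps}(1). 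Hence $\mathcal{A}_{k+1} = \mathcal{A}_k \wedge \psi(Q_k) \le \mathcal{A}_k$, so the sequence is nonincreasing in the lattice $\Pi(n)$. Since $\Pi(n)$ is finite, the sequence cannot strictly decrease forever, so there is a $k_0$ with $\mathcal{A}_{k_0+1} = \mathcal{A}_{k_0}$; and once consecutive terms agree the recursion produces the same partition thereafter, giving a common stable value $\mathcal{A}_* := \mathcal{A}_{k_0}$.

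Next I would identify $\mathcal{A}_*$. The stabilization condition $\mathcal{A}_* = \mathcal{A}_{k_0+1} = \mathcal{A}_* \wedge \psi(Q_{k_0})$ is equivalent to $\mathcal{A}_* \le \psi(Q_{k_0}) = \psi(\left[\!\begin{array}{c|c|c} M_1 P(\mathcal{A}_*) & \cdots & M_m P(\mathcal{A}_*)\end{array}\!\right])$. By Corollary~\ref{cor:psiM}, this is exactly the statement that $\mathcal{A}_* \in \Pi_{\mathcal{M}}(n)$. So the limit is an $\mathcal{M}$-invariant partition, and since each $\mathcal{A}_{k} \le \mathcal{A}_0 = \mathcal{A}$, we have $\mathcal{A}_* \le \mathcal{A}$, placing $\mathcal{A}_*$ in $\Pi_{\mathcal{M}}(n) \cap \downarrow\mathcal{A}$.

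It remains to show $\mathcal{A}_*$ is the \emph{coarsest} such partition, i.e.\ $\mathcal{A}_* = \cip_{\mathcal{M}}(\mathcal{A}) = \bigvee(\Pi_{\mathcal{M}}(n) \cap \downarrow\mathcal{A})$. Since $\mathcal{A}_*$ itself lies in $\Pi_{\mathcal{M}}(n) \cap \downarrow\mathcal{A}$, we have $\mathcal{A}_* \le \cip_{\mathcal{M}}(\mathcal{A})$. For the reverse inequality I would argue by induction that $\mathcal{B} \le \mathcal{A}_k$ for every $\mathcal{B} \in \Pi_{\mathcal{M}}(n) \cap \downarrow\mathcal{A}$ and all $k$. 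The base case $\mathcal{B} \le \mathcal{A}_0 = \mathcal{A}$ holds by assumption. For the inductive step, suppose $\mathcal{B} \le \mathcal{A}_k$; then $\sys(\mathcal{A}_k) \subseteq \sys(\mathcal{B})$, so $M_l \sys(\mathcal{A}_k) \subseteq M_l \sys(\mathcal{B}) \subseteq \sys(\mathcal{B})$ using the $\mathcal{M}$-invariance of $\mathcal{B}$. Thus $\Col(M_l P(\mathcal{A}_k)) \subseteq \sys(\mathcal{B})$ for every $l$, and together with $\Col(P(\mathcal{A}_k)) \subseteq \sys(\mathcal{A}_k) \subseteq \sys(\mathcal{B})$ this gives $\Col(\left[\!\begin{array}{c|c} P(\mathcal{A}_k) & Q_k\end{array}\!\right]) \subseteq \sys(\mathcal{B})$; by Proposition~\ref{prop:psiProps}(3) and (1) this yields $\mathcal{A}_{k+1} = \psi(\left[\!\begin{array}{c|c} P(\mathcal{A}_k) & Q_k\end{array}\!\right]) \ge \psi(P(\mathcal{B})) = \mathcal{B}$, completing the induction. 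Taking $k = k_0$ shows $\mathcal{B} \le \mathcal{A}_*$ for every such $\mathcal{B}$, hence $\cip_{\mathcal{M}}(\mathcal{A}) = \bigvee(\Pi_{\mathcal{M}}(n) \cap \downarrow\mathcal{A}) \le \mathcal{A}_*$, and equality follows.

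The main obstacle I anticipate is the inductive coarseness argument, specifically getting the column-space containments to interact correctly with the direction-reversing order embedding $\sys$ and the order-reversing map $\psi$; the bookkeeping of which inequalities flip requires care, but the engine is just Proposition~\ref{prop:psiProps} combined with the invariance hypothesis on $\mathcal{B}$. The finiteness and monotonicity parts are routine once the meet decomposition $\mathcal{A}_{k+1} = \mathcal{A}_k \wedge \psi(Q_k)$ is written down.
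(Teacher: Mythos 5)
Your proof is correct, and its skeleton matches the paper's: the meet decomposition $\mathcal{A}_{k+1}=\mathcal{A}_{k}\wedge\psi(Q_{k})$ from Proposition~\ref{prop:psiProps}(1),(2), stabilization by finiteness of $\Pi(n)$, identification of fixed points of the recursion with $\mathcal{M}$-invariant partitions via Corollary~\ref{cor:psiM}, and an induction driven by Proposition~\ref{prop:psiProps}(3) for the coarseness claim. The one genuine difference is the object of that final induction: the paper proves $\cip_{\mathcal{M}}(\mathcal{A})\le\mathcal{A}_{k}$ directly, which requires knowing in advance that $\cip_{\mathcal{M}}(\mathcal{A})=\bigvee(\Pi_{\mathcal{M}}(n)\cap\downarrow\mathcal{A})$ is itself $\mathcal{M}$-invariant (the proposition opening Section~7, which rests on Proposition~\ref{prop:vee}); you instead prove $\mathcal{B}\le\mathcal{A}_{k}$ for every individual $\mathcal{B}\in\Pi_{\mathcal{M}}(n)\cap\downarrow\mathcal{A}$ and pass to the supremum only at the end. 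Your variant is slightly more self-contained: it never invokes closure of $\Pi_{\mathcal{M}}(n)\cap\downarrow\mathcal{A}$ under joins, and in fact re-derives that closure as a byproduct, since your argument exhibits the join as equal to the invariant partition $\mathcal{A}_{*}$. What the paper's version buys in exchange is an induction statement about a single fixed partition rather than a universally quantified one, which is marginally cleaner to state; the chain of column-space containments inside the inductive step is otherwise identical in the two proofs.
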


\begin{proof}
Proposition~\ref{prop:psiProps}(1) and $(2)$ imply that 
\[
\mathcal{A}_{k+1}=\psi(\left[\!\begin{array}{c|c}
P(\mathcal{A}_{k}) & Q_{k}\end{array}\!\right])=\psi(P(\mathcal{A}_{k}))\wedge\psi(Q_{k})=\mathcal{A}_{k}\wedge\psi(Q_{k}).
\]
Hence $\mathcal{A}_{k+1}\le\mathcal{A}_{k}$ for all $k$.

We show that $\mathcal{A}_{k}\in\Pi_{\mathcal{M}}$ if and only if
$\mathcal{A}_{k+1}=\mathcal{A}_{k}$. If $\mathcal{A}_{k}\in\Pi_{\mathcal{M}}$
then the forward direction of Corollary~\ref{cor:psiM} implies that
$\mathcal{A}_{k}\le\psi(Q_{k})$. Hence $\mathcal{A}_{k}\le\mathcal{A}_{k}\wedge\psi(Q_{k})=\mathcal{A}_{k+1}$,
and so $\mathcal{A}_{k+1}=\mathcal{A}_{k}$. If $\mathcal{A}_{k+1}=\mathcal{A}_{k}$
then
\[
\mathcal{A}_{k}=\mathcal{A}_{k+1}=\mathcal{A}_{k}\wedge\psi(Q_{k})\le\psi(Q_{k}),
\]
and so $\mathcal{A}_{k}\in\Pi_{\mathcal{M}}$ by the backward direction
of Corollary~\ref{cor:psiM}.

Hence $\mathcal{A}_{k}\notin\Pi_{\mathcal{M}}$ implies $\mathcal{A}_{k+1}<\mathcal{A}_{k}$.
Since $\Pi$ is finite and the discrete partition, which is the minimum
element of $\Pi$, belongs to $\Pi_{\mathcal{M}}$, the sequence eventually
produces a partition $\mathcal{A}_{k_{0}}$ in $\Pi_{\mathcal{M}}$.
From this point on the sequence remains constant.

Now we use induction on $k$ to show that $\cip_{\mathcal{M}}(\mathcal{A})\le\mathcal{A}_{k}$
for all $k$. This will imply that $\cip_{\mathcal{M}}(\mathcal{A})=\mathcal{A}_{k_{0}}$.
The base step is clear from the definition. For the inductive step
assume that $\cip_{\mathcal{M}}(\mathcal{A})\le\mathcal{A}_{k}$ for
some $k$. Then $\sys(\cip_{\mathcal{M}}(\mathcal{A}))\supseteq\sys(\mathcal{A}_{k})=\Col(P(\mathcal{A}_{k}))$.
Hence
\[
\sys(\cip_{\mathcal{M}}(\mathcal{A}))\supseteq M_{i}\sys(\cip_{\mathcal{M}}(\mathcal{A}))\supseteq M_{i}\sys(\mathcal{A}_{k})=M_{i}\Col(P(\mathcal{A}_{k}))=\Col(M_{i}P(\mathcal{A}_{k}))
\]
for all $M_{i}\in\mathcal{M}$ by the $M_{i}$-invariance of $\cip_{\mathcal{M}}(\mathcal{A})$.
This implies that $\sys(\cip_{\mathcal{M}}(\mathcal{A}))\supseteq\Col(Q_{k})$.
Hence
\[
\Col(P(\cip_{\mathcal{M}}(\mathcal{A})))=\sys(\cip_{\mathcal{M}}(\mathcal{A}))\supseteq\Col\left[\!\begin{array}{c|c}
P(\mathcal{A}_{k}) & Q_{k}\end{array}\!\right].
\]
Now Proposition~\ref{prop:psiProps}(3) gives
\[
\cip_{\mathcal{M}}(\mathcal{A})=\psi(P(\cip_{\mathcal{M}}(\mathcal{A})))\le\psi(\left[\!\begin{array}{c|c}
P(\mathcal{A}_{k}) & Q_{k}\end{array}\!\right])=\mathcal{A}_{k+1}.
\]
\end{proof}
\begin{note}
\begin{figure}
\begin{centering}
\fbox{\begin{minipage}[c][1\totalheight][t]{0.7\columnwidth}%
\texttt{\small{}input: M = matrix, c = coloring vector}{\small\par}

\texttt{\small{}output: MP = M times the characteristic matrix}{\small\par}

\texttt{\small{}MP = zero matrix}{\small\par}

\texttt{\small{}for i in $\{1,\ldots,n\}$}{\small\par}

\texttt{\small{}$\qquad$for j in $\{1,\ldots,n\}$}{\small\par}

\texttt{\small{}$\qquad$$\qquad$MP{[}i{]}{[}c{[}j{]}{]} += M{[}i{]}{[}j{]}}{\small\par}%
\end{minipage}}
\par\end{centering}
\begin{centering}
\par\end{centering}
\caption{\label{fig:pseudo}Efficient computation of $MP(\mathcal{A})$ using
$c(\mathcal{A})$ avoiding the computation of $P(\mathcal{A})$.}
\end{figure}

The cir algorithm can be implemented using only the coloring vector,
without ever computing the characteristic matrix. This makes the implementation
perform faster. 

In the recursive definition, $\mathcal{A}_{k+1}:=\psi(\left[\!\begin{array}{c|c}
P(\mathcal{A}_{k}) & Q_{k}\end{array}\!\right])$ can be replaced by $\mathcal{A}_{k+1}:=\psi(\left[\!\begin{array}{c|c}
c(\mathcal{A}_{k}) & Q_{k}\end{array}\!\right])$ since two rows of $P(\mathcal{A}_{k})$ are equal if and only if
the corresponding rows of $c(\mathcal{A}_{k})$ are equal.

The matrix product $MP(\mathcal{A})$ can also be computed faster
without full matrix multiplication, as shown in the pseudo code segment
of Figure~\ref{fig:pseudo}. To explain the code segment, let $(c_{1},\ldots,c_{n}):=c(\mathcal{A})$,
so that $P(\mathcal{A})_{j,a}=\delta_{c_{j},a}$. Then $(MP(\mathcal{A}))_{i,a}=\sum_{j}M_{i,j}\delta_{c_{j},a}$.
So the code adds $M_{i,j}$ to $(MP(\mathcal{A}))_{i,a}$ whenever
$a=c_{j}$.

If the matrices in $\mathcal{M}$ are sparse, then a further speedup
is possible with a modification of the code segment that uses sparse
matrix representations.
\end{note}

\begin{example}
\label{exa:cip}
\begin{figure}
\begin{tabular}{cccccc}
$k$ & $\mathcal{A}_{k}$ & $c(\mathcal{A}_{k})$ & $P(\mathcal{A}_{k})$ & $\left[\!\begin{array}{c|c}
c(\mathcal{A}_{k}) & Q_{k}\end{array}\!\right]$ & vertex coloring\tabularnewline
\hline 
 &  &  &  &  & \tabularnewline
$0$ & ${\scriptstyle 14|235}$ & ${\scriptstyle (1,2,2,1,2)}$ & $\left[\begin{smallmatrix}1 & 0\\
0 & 1\\
0 & 1\\
1 & 0\\
0 & 1
\end{smallmatrix}\right]$ & $\left[\begin{smallmatrix}1 & \vv & 0 & 0 & \vv & 0 & 1\\
2 & \vv & 0 & 1 & \vv & 1 & 0\\
2 & \vv & 0 & 0 & \vv & 1 & 0\\
1 & \vv & 0 & 0 & \vv & 0 & 1\\
2 & \vv & 0 & 0 & \vv & 1 & 0
\end{smallmatrix}\right]$ & \includegraphics{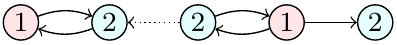}\tabularnewline
 &  &  &  &  & \tabularnewline
$1$ & ${\scriptstyle 14|2|35}$ & ${\scriptstyle (1,2,3,1,3)}$ & $\left[\begin{smallmatrix}1 & 0 & 0\\
0 & 1 & 0\\
0 & 0 & 1\\
1 & 0 & 0\\
0 & 0 & 1
\end{smallmatrix}\right]$ & $\left[\begin{smallmatrix}1 & \vv & 0 & 0 & 0 & \vv & 0 & 1 & 0\\
2 & \vv & 0 & 0 & 1 & \vv & 1 & 0 & 0\\
3 & \vv & 0 & 0 & 0 & \vv & 1 & 0 & 0\\
1 & \vv & 0 & 0 & 0 & \vv & 0 & 0 & 1\\
3 & \vv & 0 & 0 & 0 & \vv & 1 & 0 & 0
\end{smallmatrix}\right]$ & \includegraphics{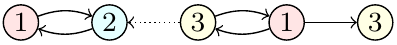}\tabularnewline
 &  &  &  &  & \tabularnewline
$2$ & ${\scriptstyle 1|2|35|4}$ & ${\scriptstyle (1,2,3,4,3)}$ & $\left[\begin{smallmatrix}1 & 0 & 0 & 0\\
0 & 1 & 0 & 0\\
0 & 0 & 1 & 0\\
0 & 0 & 0 & 1\\
0 & 0 & 1 & 0
\end{smallmatrix}\right]$ & $\left[\begin{smallmatrix}1 & \vv & 0 & 0 & 0 & 0 & \vv & 0 & 1 & 0 & 0\\
2 & \vv & 0 & 0 & 1 & 0 & \vv & 1 & 0 & 0 & 0\\
3 & \vv & 0 & 0 & 0 & 0 & \vv & 0 & 0 & 0 & 1\\
4 & \vv & 0 & 0 & 0 & 0 & \vv & 0 & 0 & 1 & 0\\
3 & \vv & 0 & 0 & 0 & 0 & \vv & 0 & 0 & 0 & 1
\end{smallmatrix}\right]$ & \includegraphics{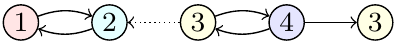}\tabularnewline
\end{tabular}

\caption{\label{fig:cipSteps}The steps of the cir algorithm, starting with
$\mathcal{A}_{0}=14|235$. The third and fifth rows of $[c(\mathcal{A}_{2})\:|\:Q_{2}]$
are equal, so $\mathcal{A}_{2}=\mathcal{A}_{3}=\protect\cip(\mathcal{A}_{0})$.}
\end{figure}

Figure~\ref{fig:cipSteps} shows the steps for finding $\cip_{\mathcal{M}}(14|235)=1|2|35|4$
for a cell network with adjacency matrices
\[
\mathcal{M}=\{\left[\begin{smallmatrix}0 & 0 & 0 & 0 & 0\\
0 & 0 & 1 & 0 & 0\\
0 & 0 & 0 & 0 & 0\\
0 & 0 & 0 & 0 & 0\\
0 & 0 & 0 & 0 & 0
\end{smallmatrix}\right],\left[\begin{smallmatrix}0 & 1 & 0 & 0 & 0\\
1 & 0 & 0 & 0 & 0\\
0 & 0 & 0 & 1 & 0\\
0 & 0 & 1 & 0 & 0\\
0 & 0 & 0 & 1 & 0
\end{smallmatrix}\right]\}.
\]
Since $\mathcal{M}$ contains adjacency matrices, the matrix multiplication
in the cir algorithm can be interpreted as counting incoming arrows.
The entries $(M_{1}P(\mathcal{A}))_{i,a}$ and $(M_{2}P(\mathcal{A}))_{i,a}$
of the matrices in $Q_{k}$ are the number of dashed and solid arrows,
respectively, coming into vertex $i$ from the set of cells with color
$a$. Every step of the cir algorithm breaks up a partition according
to the isomorphism classes of the input sets.

At the first step of the algorithm, the cells with color 2 are broken
into two colors, since vertices 3 and 5 each receive one solid arrow
from color 1, but vertex 2 receives a dashed arrow from color 2 and
a solid arrow from color 1. At the second step of the algorithm, the
two cells with color 1 are separated into two colors because the solid
incoming arrows come from different colors.
\end{example}

\section{The split and cir algorithm for finding the lattice of invariant
partitions}

Proposition~\ref{prop:spliting} and the cir algorithm can be combined
into a split and cir algorithm to find all $\mathcal{M}$-invariant
partitions reasonably quickly. We start with the singleton partition
$\mathcal{A}=\{1,\ldots,n\}$. The cir algorithm produces $\cip_{\mathcal{M}}(\mathcal{A})$
which is the coarsest $\mathcal{M}$-invariant partition. We put this
partition into a queue of $\mathcal{M}$-invariant partitions to further
analyze. For each $\mathcal{M}$-invariant partition $\mathcal{A}$
in the queue, we find each lower cover $\mathcal{B}$ of $\mathcal{A}$
by splitting one of the classes of $\mathcal{A}$. Then we use the
cir algorithm to find $\cip_{\mathcal{M}}(\mathcal{B})$ and add it
to the queue. The algorithm stops when the queue is empty. Figure~\ref{fig:splitCip}
shows the pseudo code for the split and cir algorithm.

\begin{figure}
\begin{centering}
\fbox{\begin{minipage}[c][1\totalheight][t]{0.7\columnwidth}%
\texttt{\small{}input: list of matrices}{\small\par}

\texttt{\small{}output: invPartitions $=$ set of invariant partitions}{\small\par}

\texttt{\small{}$\mathcal{A}=$ singleton partition}{\small\par}

\texttt{\small{}queue.push$(\cip(\mathcal{A}))$}{\small\par}

\texttt{\small{}while queue not empty}{\small\par}

\texttt{\small{}$\qquad$$\mathcal{A}=$ queue.pop()}{\small\par}

\texttt{\small{}$\qquad$find lower covers of $\mathcal{A}$ by splitting
a class of $\mathcal{A}$}{\small\par}

\texttt{\small{}$\qquad$for each $\mathcal{B}$ lower cover of $\mathcal{A}$}{\small\par}

\texttt{\small{}$\qquad$$\qquad$$\mathcal{B}=\cip(\mathcal{B})$}{\small\par}

\texttt{\small{}$\qquad$$\qquad$invPartitions.add$(\mathcal{B})$}{\small\par}

\texttt{\small{}$\qquad$$\qquad$if $\mathcal{B}$ is not in queue}{\small\par}

\texttt{\small{}$\qquad$$\qquad$$\qquad$queue.push$(\mathcal{B})$}{\small\par}%
\end{minipage}}
\par\end{centering}
\caption{\label{fig:splitCip}The split and cir algortihm for finding the $\mathcal{M}$-invariant
partitions.}
\end{figure}

The algorithm can be further improved by considering symmetries of
$\mathcal{M}$. We implemented this algorithm in C++. We compiled
the code with the gnu compiler gcc and ran it on a 3.4 GHz Intel i7-3770
CPU. We also have a Sage cell \cite{sagemath} available on the companion
web page \cite{invariantWEB}.
\begin{example}
Figure~\ref{fig:posetAlgo} shows the steps of our algorithm to find
the 4 balanced partitions that are invariant under the in-adjacency
matrix $A$ of the digraph shown. The number of partitions in $\Pi(7)$
is the Bell number $B_{7}=877$. Our algorithm visits only $10$ of
these partitions to find all invariant partitions.
\end{example}

\begin{figure}
\begin{tabular}{cc}
\begin{tabular}{c}
\includegraphics{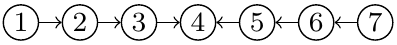}\tabularnewline
\tabularnewline
$A=\left[\begin{smallmatrix}0 & 0 & 0 & 0 & 0 & 0 & 0\\
1 & 0 & 0 & 0 & 0 & 0 & 0\\
0 & 1 & 0 & 0 & 0 & 0 & 0\\
0 & 0 & 1 & 0 & 1 & 0 & 0\\
0 & 0 & 0 & 0 & 0 & 1 & 0\\
0 & 0 & 0 & 0 & 0 & 0 & 1\\
0 & 0 & 0 & 0 & 0 & 0 & 0
\end{smallmatrix}\right]$\tabularnewline
\end{tabular} & %
\begin{tabular}{c}
\includegraphics[clip]{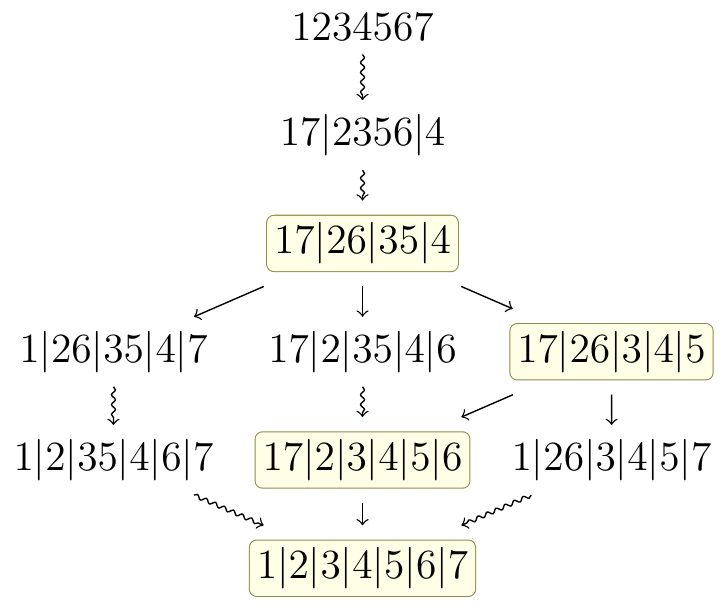}\tabularnewline
\end{tabular}\tabularnewline
(i) & (ii)\tabularnewline
\end{tabular}

\caption{\label{fig:posetAlgo}(i) A cell network and its in-adjacency matrix.
(ii) Steps in the split and cir algorithm for finding the lattice
of balanced partitions. Straight arrows point to lower covers, corresponding
to splits. Squiggly arrows show the steps from $\mathcal{A}_{k}$
to $\mathcal{A}_{k+1}$ in the cir algorithm described in Proposition~\ref{prop:cip}\textcolor{blue}{.}
The balanced partitions are boxed.}
\end{figure}

\begin{example}
Our code finds 37 almost-equitable partitions in 21 orbit classes
of the 27-vertex Sierpinski pre-gasket in less than 6 minutes. The
results are available on the companion web page \cite{invariantWEB}.
\end{example}

\begin{example}
The table below shows the number of balanced partitions of the cycle
graph $C_{n}$ and the running time of our C++ code to find them.

\vspace{1mm}\centerline{%
\begin{tabular}{|c|c|c|c|c|c|c|c|c|c|c|c|}
\hline 
$n$ & 20 & 21 & 22 & 23 & 24 & 25 & 26 & 27 & 28 & 29 & 30\tabularnewline
\hline 
$|\Pi_{A}(n)|$ & 45 & 35 & 37 & 25 & 65 & 33 & 43 & 43 & 59 & 31 & 77\tabularnewline
\hline 
time (sec) & 1 & 2 & 4 & 8 & 16 & 31 & 67 & 131 & 271 & 559 & 1110\tabularnewline
\hline 
\end{tabular}}\vspace{1mm}\noindent The observed run time appears to be of order
$2^{n}$. This is reasonable since the bulk of the time is spent doing
cir to each of the splittings of $12\cdots n$. There are $2^{n-1}-1$
such splittings, since the number of proper nontrivial subsets of
$\left\{ 1,2,\ldots,n\right\} $ is twice the number of splits. The
algorithm of \cite{KameiLattice} checks all of the partitions in
this example and the run time is roughly of order the Bell number,
which grows faster than exponential.

Every orbit partition arises from a factor of $n$, and it can be
shown that the number of orbit partitions is $\sum_{d|n}f(d)$, where
\[
f(d):=\begin{cases}
1, & d\in\{1,2\}\\
d+1, & d>2.
\end{cases}
\]
This count agrees with the second row of the table; for example, $|\Pi_{A}(25)|=f(1)+f(5)+f(25)=1+6+26=33$.
We conjecture that all balanced partitions are orbit partitions. For
this graph the eigenvectors of $A$ are known in closed form, and
the algorithm in \cite{Aguiar&Dias} could probably be used to prove
this conjecture. Our conjecture is tantalizingly close to \cite[Corollary 4.7]{Antoneli2},
and perhaps the techniques of that paper can be extended to prove
it.
\end{example}

\begin{example}
The table below shows the running times of our C++ code to find the
$10$ balanced partitions of $P_{n}\square P_{n}$. 

\vspace{1mm}\centerline{%
\begin{tabular}{|c|c|c|c|c|c|c|c|c|c|c|c|}
\hline 
$n$ & 20 & 21 & 22 & 23 & 24 & 25 & 26 & 27 & 28 & 29 & 30\tabularnewline
\hline 
time (sec) & 39 & 57 & 89 & 127 & 191 & 267 & 388 & 527 & 757 & 1017 & 1355\tabularnewline
\hline 
\end{tabular}}\vspace{1mm}\noindent The run times seem consistent with a power
law of order $n^{8.8}$. The coarsest invariant partition for this
family is the orbit partition, so the largest element has size 8.
This bound on the size of the largest element of the coarsest partition
avoids the exponential growth in run time observed for the $C_{n}$
family.
\end{example}

The results for these two families of graphs show that it is very
difficult to estimate the complexity of our split and cir algorithm.
Certainly the size of the largest class of the coarsest invariant
partition is important.\textcolor{red}{{} }For the complete graph, every
partition is a fixed point of cir, so the split and cir algorithm
is essentially brute force. If the coarsest invariant partition is
the discrete partition, then our  algorithm does no splitting. Our
split and cir algorithm performs best compared to existing algorithms
if cir gives significant refinement when applied to the splits.

\section{Tactical decompositions}

The following is a generalization of the tactical decompositions studied
in design theory \cite{Betten,CameronLiebler,Dembowski,DembowskiBook},
also called equitable partitions in \cite[Section 12.7]{Godsil}.
A tactical decomposition of an incidence structure is a generalization
of the automorphism group of the incidence structure.
\begin{example}
Let $\Pi(m,n)$ be the product lattice $\Pi(m)\times\Pi(n)$. That
is, $(\mathcal{A},\mathcal{B})\le(\mathcal{C},\mathcal{D})$ if $\mathcal{A}\le\mathcal{C}$
and $\mathcal{D}\le\mathcal{B}$, while the lattice operations $\vee$
and $\wedge$ are defined coordinatewise. If $(\mathcal{A},\mathcal{B})\le(\mathcal{C},\mathcal{D})$
then we say that $(\mathcal{A},\mathcal{B})$ is \emph{finer} than
$(\mathcal{C},\mathcal{D})$, or that $(\mathcal{C},\mathcal{D})$
is \emph{coarser} than $(\mathcal{A},\mathcal{B})$. We simply write
$\Pi$ if $m$ and $n$ is clear from the context.
\end{example}

\begin{defn}
Let $\mathcal{M}\subseteq\mathbb{R}^{m\times n}$. An element $(\mathcal{A},\mathcal{B})$
of $\Pi(m,n)$ is a \emph{tactical decomposition} of $\mathcal{M}$
if $M\sys(\mathcal{B})\subseteq\sys(\mathcal{A})$ and $M^{T}\sys(\mathcal{A})\subseteq\sys(\mathcal{B})$
for all $M\in\mathcal{M}$. The set of tactical decompositions is
denoted by $\Pi{}_{\mathcal{M}}(m,n)$.
\end{defn}

Note that if $m=n$ and $\mathcal{A}$ is an $\mathcal{M}$-invariant
partition, then $(\mathcal{A},\mathcal{A})$ is a tactical decomposition.
On the other hand $\Pi_{\mathcal{M}}(n,n)\ne\Pi_{\mathcal{M}}(n)$.

An incidence structure is a triple consisting\textcolor{blue}{{} }of
a set of points, a set of lines, and an incidence \textcolor{black}{relation}
determining which points are incident to which lines. Any graph can
be considered as an incidence structure, where the vertices of the
graph are the points and the edges of the graph are the lines in the
incidence structure.
\begin{example}
\begin{figure}
\setlength{\tabcolsep}{1pt}

\begin{tabular}{cccccc}
\includegraphics{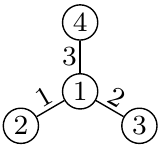} & \includegraphics{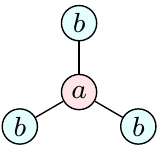} & \includegraphics{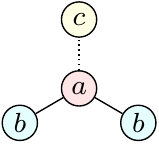} & \includegraphics{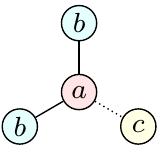} & \includegraphics{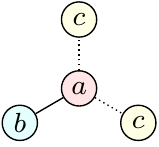} & \includegraphics{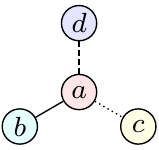}\tabularnewline
\multirow{2}{*}{%
\begin{tabular}{c}
\includegraphics{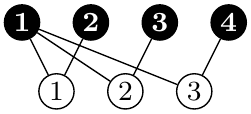}\tabularnewline
\end{tabular}} & %
\begin{tabular}{c}
$1|234$\tabularnewline
$123$\tabularnewline
\end{tabular} & %
\begin{tabular}{c}
$1|23|4$\tabularnewline
$12|3$\tabularnewline
\end{tabular} & %
\begin{tabular}{c}
$1|24|3$\tabularnewline
$13|2$\tabularnewline
\end{tabular} & %
\begin{tabular}{c}
$1|2|34$\tabularnewline
$1|23$\tabularnewline
\end{tabular} & %
\begin{tabular}{c}
$1|2|3|4$\tabularnewline
$1|2|3$\tabularnewline
\end{tabular}\tabularnewline
 & $(\mathcal{A}_{1},\mathcal{B}_{1})$ & $(\mathcal{A}_{2},\mathcal{B}_{2})$ & $(\mathcal{A}_{3},\mathcal{B}_{3})$ & $(\mathcal{A}_{4},\mathcal{B}_{4})$ & $(\mathcal{A}_{5},\mathcal{B}_{5})$\tabularnewline
(i) &  &  & (ii) &  & \tabularnewline
\end{tabular}%
\begin{tabular}{c}
\includegraphics{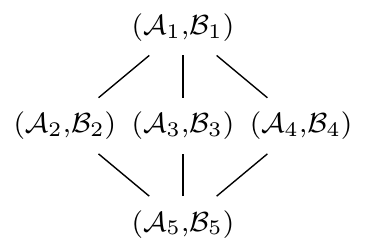}\tabularnewline
\vspace{3.5mm}\tabularnewline
(iii)\tabularnewline
\end{tabular}

\caption{\label{fig:TacticalEx-0}(i) The star graph $K_{1,3}$ and its incidence
graph. (ii) The tactical decompositions of the incidence matrix. (iii)
Lattice of tactical decompositions.}
\end{figure}

Figure~\ref{fig:TacticalEx-0}(i) shows the star graph $K_{1,3}$
and its incidence graph. The incidence graph is a bipartite graph
with black vertices representing points, white vertices representing
lines, and edges representing incidence. The incidence matrix is 
\[
M=\left[\begin{smallmatrix}1 & 1 & 1\\
1 & 0 & 0\\
0 & 1 & 0\\
0 & 0 & 1
\end{smallmatrix}\right].
\]
The tactical decompositions of $M$ are shown in Figure \ref{fig:TacticalEx-0}(ii).
\end{example}

While an incidence structure has only one type of incidence, our definition
of tactical decompositions allows more than one edge type in the incidence
graph as in the next example.
\begin{example}
\begin{figure}
\setlength{\tabcolsep}{1pt}

\begin{tabular}{cccc}
\includegraphics{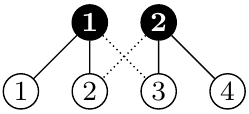} &  & \includegraphics{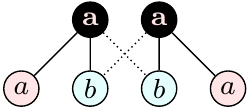} & \includegraphics{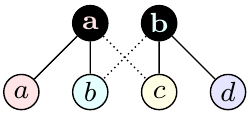}\tabularnewline
$M_{1}=\left[\begin{smallmatrix}1 & 1 & 0 & 0\\
0 & 0 & 1 & 1
\end{smallmatrix}\right]$ & ~ & %
\begin{tabular}{c}
$12$\tabularnewline
$14|23$\tabularnewline
\end{tabular} & %
\begin{tabular}{c}
$1|2$\tabularnewline
$1|2|3|4$\tabularnewline
\end{tabular}\tabularnewline
$M_{2}=\left[\begin{smallmatrix}0 & 0 & 1 & 0\\
0 & 1 & 0 & 0
\end{smallmatrix}\right]$ &  & $(\mathcal{A}_{1},\mathcal{B}_{1})$ & $(\mathcal{A}_{2},\mathcal{B}_{2})$\tabularnewline
(i) &  & \multicolumn{2}{c}{(ii)}\tabularnewline
\end{tabular}~%
\begin{tabular}{c}
\includegraphics{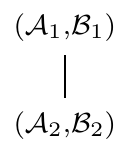}\tabularnewline
\tabularnewline
\tabularnewline
(iii)\tabularnewline
\end{tabular}

\caption{\label{fig:TacticalEx-1}(i) Edge colored incidence graph of an incidence
structure. (ii) Tactical decompositions of $\mathcal{M}=\{M_{1},M_{2}\}$.
(iii) Lattice of tactical partitions.}
\end{figure}

Figure~\ref{fig:TacticalEx-1} shows the tactical decompositions
of $\mathcal{M}=\{M_{1},M_{2}\}$ that contains the incidence matrices
of an edge colored incidence graph of the incidence structure containing
the black points and white lines.
\end{example}

\begin{example}
\begin{figure}
\setlength{\tabcolsep}{1pt}

\begin{tabular}{ccccccc}
\begin{tabular}{c}
\includegraphics{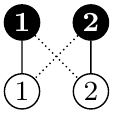}\tabularnewline
$\left[\begin{smallmatrix}1 & 0\\
0 & 1
\end{smallmatrix}\right]$\tabularnewline
\tabularnewline
$\left[\begin{smallmatrix}0 & 1\\
1 & 0
\end{smallmatrix}\right]$\tabularnewline
\tabularnewline
\tabularnewline
\end{tabular} & %
\begin{tabular}{c}
\begin{tabular}{c}
$12$\tabularnewline
$12$\tabularnewline
\end{tabular}\tabularnewline
$(\mathcal{A}_{1},\mathcal{B}_{1})$\tabularnewline
\tabularnewline
\begin{tabular}{c}
$1|2$\tabularnewline
$1|2$\tabularnewline
\end{tabular}\tabularnewline
$(\mathcal{A}_{2},\mathcal{B}_{2})$\tabularnewline
\tabularnewline
\end{tabular} & %
\begin{tabular}{c}
\includegraphics{K24Lattice}\tabularnewline
\tabularnewline
\tabularnewline
\tabularnewline
\tabularnewline
\tabularnewline
\end{tabular} & ~ & %
\begin{tabular}{c}
\includegraphics{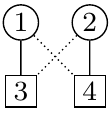}\tabularnewline
$\left[\begin{smallmatrix}0 & 0 & 1 & 0\\
0 & 0 & 0 & 1\\
1 & 0 & 0 & 0\\
0 & 1 & 0 & 0
\end{smallmatrix}\right]$\tabularnewline
\tabularnewline
$\left[\begin{smallmatrix}0 & 0 & 0 & 1\\
0 & 0 & 1 & 0\\
0 & 1 & 0 & 0\\
1 & 0 & 0 & 0
\end{smallmatrix}\right]$\tabularnewline
\end{tabular} & %
\begin{tabular}{ccc}
\begin{tabular}{c}
$1234$\tabularnewline
\end{tabular} &  & %
\begin{tabular}{c}
$14|23$\tabularnewline
\end{tabular}\tabularnewline
$\mathcal{A}_{1}$ &  & $\mathcal{A}_{4}$\tabularnewline
 &  & \tabularnewline
\begin{tabular}{c}
$12|34$\tabularnewline
\end{tabular} &  & %
\begin{tabular}{c}
$1|2|3|4$\tabularnewline
\end{tabular}\tabularnewline
$\mathcal{A}_{2}$ &  & $\mathcal{A}_{5}$\tabularnewline
 &  & \tabularnewline
\begin{tabular}{c}
$13|24$\tabularnewline
\end{tabular} &  & \tabularnewline
$\mathcal{A}_{3}$ &  & \tabularnewline
\end{tabular} & %
\begin{tabular}{c}
\includegraphics{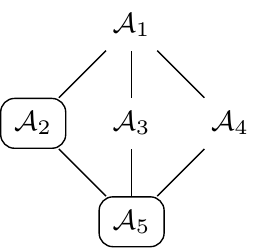}\tabularnewline
\tabularnewline
\tabularnewline
\tabularnewline
\end{tabular}\tabularnewline
(i) & (ii) & (iii) &  & (iv) & (v) & (vi)\tabularnewline
\end{tabular}

\caption{\label{fig:TacticalEx-2}(i) Edge colored incidence graph $K_{2,2}$.
(ii) Tactical decompositions. (iii) Lattice of tactical decompositions.
(iv) Cell network on $K_{2,2}$ with two cell and arrow types. (v)
$\mathcal{M}$-invariant partitions. (vi) Lattice of $\mathcal{M}$-invariant
partitions with balanced partitions circled.}
\end{figure}

Figure~\ref{fig:TacticalEx-2}(ii) shows the tactical decompositions
of $\{\left[\begin{smallmatrix}1 & 0\\
0 & 1
\end{smallmatrix}\right],\left[\begin{smallmatrix}0 & 1\\
1 & 0
\end{smallmatrix}\right]\}$ that contains the incidence matrices of the edge colored incidence
graph $K_{2,2}$ shown in Figure~\ref{fig:TacticalEx-2}(i). Figure~\ref{fig:TacticalEx-2}(v)
shows the $\mathcal{M}$-invariant partitions of the corresponding
cell network shown in Figure~\ref{fig:TacticalEx-2}(iv). Note that
the tactical decompositions correspond to the circled balanced partitions,
which are coarser than the cell type partition $\mathcal{T}=12|34$.
Note the similarity to the coupled cell network of Figure~\ref{fig:balEx2}.

The moral of this example is that tactical decompositions give a compact
way of describing balanced partitions of the incidence graph, considered
as a coupled cell network. The cell type partition in the coupled
cell network has two classes, the points and the lines of the incidence
structure. If the points are listed first and the lines second, then
the adjacency matrix of the incidence graph has the incidence matrix
in the upper right corner. This determines the whole adjacency matrix,
since the lower left corner is the transpose of the incidence matrix,
and there are two zero blocks on the diagonal.
\end{example}

The following is a generalization of Proposition~\ref{prop:supInvariant}.
\begin{prop}
\label{prop:supInvariantTactical}If $S\subseteq\Pi_{\mathcal{M}}(m,n)$
then $\bigvee S\in\Pi_{\mathcal{M}}(m,n)$.
\end{prop}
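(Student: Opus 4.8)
The plan is to reduce the statement to the single-matrix (square) case already settled in Proposition~\ref{prop:supInvariant}, by encoding a pair of partitions as one partition on the disjoint union of the two index sets. For each $M\in\mathcal{M}$ I would introduce the symmetric block matrix $\widehat{M}:=\left[\begin{smallmatrix}0 & M\\ M^{T} & 0\end{smallmatrix}\right]\in\R^{(m+n)\times(m+n)}$ and set $\widehat{\mathcal{M}}:=\{\widehat{M}\mid M\in\mathcal{M}\}$. For $\mathcal{A}\in\Pi(m)$ and $\mathcal{B}\in\Pi(n)$ let $\mathcal{A}\oplus\mathcal{B}\in\Pi(m+n)$ be the ``split'' partition that acts as $\mathcal{A}$ on $\{1,\dots,m\}$ and as $\mathcal{B}$ on $\{m+1,\dots,m+n\}$, with no class meeting both blocks. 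The dictionary I would record is the identity $\sys(\mathcal{A}\oplus\mathcal{B})=\sys(\mathcal{A})\times\sys(\mathcal{B})$ under $\R^{m+n}\equiv\R^{m}\times\R^{n}$, together with $\widehat{M}(a,b)=(Mb,\,M^{T}a)$. From these it is immediate that $\sys(\mathcal{A}\oplus\mathcal{B})$ is $\widehat{M}$-invariant exactly when $M\sys(\mathcal{B})\subseteq\sys(\mathcal{A})$ and $M^{T}\sys(\mathcal{A})\subseteq\sys(\mathcal{B})$, i.e. exactly when $(\mathcal{A},\mathcal{B})\in\Pi_{\mathcal{M}}(m,n)$. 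Thus tactical decompositions of $\mathcal{M}$ correspond precisely to the $\widehat{\mathcal{M}}$-invariant partitions of $\{1,\dots,m+n\}$ of split form.

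Next I would identify the join and transfer the invariance. Writing $S=\{(\mathcal{A}_{s},\mathcal{B}_{s})\}_{s}$, each $\mathcal{A}_{s}\oplus\mathcal{B}_{s}$ lies in $\Pi_{\widehat{\mathcal{M}}}(m+n)$ by the dictionary. The join of split partitions is again split, since the join is the transitive closure of the union of the relations and no relation links the two blocks; explicitly $\bigvee_{s}(\mathcal{A}_{s}\oplus\mathcal{B}_{s})=(\bigvee_{s}\mathcal{A}_{s})\oplus(\bigvee_{s}\mathcal{B}_{s})$. Applying Proposition~\ref{prop:supInvariant} to $\{\mathcal{A}_{s}\oplus\mathcal{B}_{s}\}_{s}$ shows this join is $\widehat{\mathcal{M}}$-invariant, and because it is split the dictionary returns it to a tactical decomposition, which is $\bigvee S$. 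Equivalently, the two inclusions can be verified directly and termwise using Proposition~\ref{prop:vee}: since $\sys(\bigvee_{s}\mathcal{B}_{s})=\bigcap_{s}\sys(\mathcal{B}_{s})$, any $x$ in this intersection has $Mx\in\sys(\mathcal{A}_{s})$ for every $s$, hence $Mx\in\bigcap_{s}\sys(\mathcal{A}_{s})=\sys(\bigvee_{s}\mathcal{A}_{s})$, giving $M\sys(\bigvee_{s}\mathcal{B}_{s})\subseteq\sys(\bigvee_{s}\mathcal{A}_{s})$; the inclusion for $M^{T}$ is the mirror image.

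The two dictionary identities are routine consequences of the definitions of $\sys$ and of $\oplus$, so I would state them quickly. The step that most deserves care — and the one I would treat as the main obstacle — is pinning down exactly what $\bigvee S$ is as an element of $\Pi(m,n)$ and matching it, coordinate by coordinate, with the intersection formula of Proposition~\ref{prop:vee}; this is precisely where the orientation of the order on the second factor of $\Pi(m,n)$ must be handled consistently. The payoff of the block-matrix viewpoint is that the symmetry of $\widehat{M}$ makes the roles of $M$ and $M^{T}$ interchangeable, so the two tactical inclusions are established by one and the same argument and both survive the passage to the join. Once the join is correctly identified, the invariance transfers verbatim from Proposition~\ref{prop:supInvariant}, and no genuinely new estimate is required.
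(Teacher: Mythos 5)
Your proposal is correct, and your primary argument takes a genuinely different route from the paper. The paper proves the statement directly: writing $S=\{(\mathcal{A}_{i},\mathcal{B}_{i})\mid i\in I\}$, it combines Proposition~\ref{prop:vee} with the elementary inclusion $M\bigcap_{i}U_{i}\subseteq\bigcap_{i}MU_{i}$ to get
\[
M\sys(\bigvee_{i}\mathcal{B}_{i})=M\bigcap_{i}\sys(\mathcal{B}_{i})\subseteq\bigcap_{i}M\sys(\mathcal{B}_{i})\subseteq\bigcap_{i}\sys(\mathcal{A}_{i})=\sys(\bigvee_{i}\mathcal{A}_{i}),
\]
and the mirror computation for $M^{T}$ (the paper actually writes this with the roles of $\mathcal{A}_i$ and $\mathcal{B}_i$ interchanged, a harmless relabeling); this is exactly your ``equivalently'' fallback paragraph, so that part of your proposal coincides with the paper's proof. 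Your main argument instead reduces the rectangular problem to Proposition~\ref{prop:supInvariant} via the symmetric block matrix $\widehat{M}=\left[\begin{smallmatrix}0 & M\\ M^{T} & 0\end{smallmatrix}\right]$ and the split partition $\mathcal{A}\oplus\mathcal{B}$, and all three dictionary facts you invoke are correct: $\sys(\mathcal{A}\oplus\mathcal{B})=\sys(\mathcal{A})\times\sys(\mathcal{B})$, $\widehat{M}(a,b)=(Mb,M^{T}a)$, and joins of split partitions are split and computed blockwise (the transitive closure of a union of relations that never cross the two blocks cannot cross them either). What your reduction buys is structural: it identifies $\Pi_{\mathcal{M}}(m,n)$ with the split part of $\Pi_{\widehat{\mathcal{M}}}(m+n)$, makes the symmetry between $M$ and $M^{T}$ automatic, and would in principle let the square-case machinery (cir, split and cir) be recycled for tactical decompositions instead of being redeveloped as in Sections 10 and 11; what the paper's direct computation buys is brevity and self-containment. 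Two minor caveats for your version: Proposition~\ref{prop:supInvariant} is stated only for $\emptyset\neq S$, whereas the present statement allows $S=\emptyset$, so you should either treat that case separately (the pair of discrete partitions has full synchrony subspaces $\R^{m}$ and $\R^{n}$ and is trivially a tactical decomposition) or add the nonemptiness hypothesis; and your identification $\bigvee S=(\bigvee_{s}\mathcal{A}_{s},\bigvee_{s}\mathcal{B}_{s})$ uses the coordinatewise lattice operations, which is what the paper itself uses in its proofs, so your explicit flagging of the order orientation on the second factor is justified and consistent with the paper's actual usage.
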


\begin{proof}
Let $S=\{(\mathcal{A}_{i},\mathcal{B}_{i})\mid i\in I\}$. Then 
\[
\begin{aligned}M\sys(\bigvee_{i\in I}\mathcal{A}_{i}) & =M\bigcap\{\sys(\mathcal{A}_{i})\mid i\in I\}\subseteq\bigcap\{M\sys(\mathcal{A}_{i})\mid i\in I\}\\
 & \subseteq\bigcap\{\sys(\mathcal{B}_{i})\mid i\in I\}=\sys(\bigvee_{i\in I}\mathcal{B}_{i})
\end{aligned}
\]
for all $M\in\mathcal{M}$. Similar computation shows that 
\[
M^{T}\sys(\bigvee_{i\in I}\mathcal{B}_{i})\subseteq\sys(\bigvee_{i\in I}\mathcal{A}_{i})
\]
for all $M\in\mathcal{M}$. Hence $\bigvee S=(\bigvee_{i}\mathcal{A}_{i},\bigvee_{i}\mathcal{B}_{i})\in\Pi_{\mathcal{M}}$.
\end{proof}
\begin{prop}
\label{prop:TacticalLattice}If $\mathcal{M}\subseteq\mathbb{R}^{m\times n}$
then $\Pi_{\mathcal{M}}(m,n)$ is a lattice.
\end{prop}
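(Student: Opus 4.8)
The plan is to reproduce verbatim the argument used to prove that $\Pi_{\mathcal{M}}(n)$ is a lattice, now applied to the product poset $\Pi(m,n)$. The single tool is Proposition~\ref{prop:latticeCondition}: to conclude that $\Pi_{\mathcal{M}}(m,n)$ is a (complete) lattice it suffices to show that the supremum $\bigvee S$ exists \emph{inside} $\Pi_{\mathcal{M}}(m,n)$ for every subset $S\subseteq\Pi_{\mathcal{M}}(m,n)$, the empty set included. Everything then reduces to two checks: that the join, taken in the ambient lattice $\Pi(m,n)$, of a family of tactical decompositions is again a tactical decomposition, and that $\Pi_{\mathcal{M}}(m,n)$ has a least element to serve as $\bigvee\emptyset$.

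Both checks are already supplied by Proposition~\ref{prop:supInvariantTactical}. For nonempty $S$, that proposition says the element $\bigvee S$ computed in $\Pi(m,n)$ again satisfies $M\sys(\mathcal{B})\subseteq\sys(\mathcal{A})$ and $M^{T}\sys(\mathcal{A})\subseteq\sys(\mathcal{B})$ for all $M\in\mathcal{M}$, so it lies in $\Pi_{\mathcal{M}}(m,n)$; being the supremum in the larger poset, it is \emph{a fortiori} the supremum inside the subposet $\Pi_{\mathcal{M}}(m,n)$. Moreover, unlike its square-matrix counterpart Proposition~\ref{prop:supInvariant}, which is stated only for $\emptyset\ne S$, Proposition~\ref{prop:supInvariantTactical} is stated for all subsets $S$; applying it to $S=\emptyset$ delivers the bottom element $\bigvee\emptyset$ directly in $\Pi_{\mathcal{M}}(m,n)$. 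Concretely this bottom element is the pair $(\mathcal{A},\mathcal{B})$ of discrete partitions, for which $\sys(\mathcal{A})=\R^{m}$ and $\sys(\mathcal{B})=\R^{n}$, and one verifies that it is a tactical decomposition for free because the targets are full: $M\sys(\mathcal{B})=M\R^{n}\subseteq\R^{m}=\sys(\mathcal{A})$ and $M^{T}\sys(\mathcal{A})=M^{T}\R^{m}\subseteq\R^{n}=\sys(\mathcal{B})$. With both checks in hand, Proposition~\ref{prop:latticeCondition} closes the argument, and the note following it recovers the meets via $\bigwedge S=\bigvee L_{S}$.

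Because the substantive work has already been discharged in Proposition~\ref{prop:supInvariantTactical} (the analogue of Proposition~\ref{prop:supInvariant}), I do not expect a genuine obstacle; the proof is essentially formal. The one place that deserves care is the second coordinate: since $\Pi(m,n)$ orders its second factor by reverse refinement, I must track the direction of the order consistently so that the operation appearing in Proposition~\ref{prop:supInvariantTactical} really is the supremum for the product order and so that the discrete/discrete pair is genuinely the least tactical decomposition. Once the order is tracked carefully both verifications are immediate. Finally, exactly as in Example~\ref{exa:sublattice}, I would caution that $\Pi_{\mathcal{M}}(m,n)$ need not be a sublattice of $\Pi(m,n)$: the meet is obtained only through the join of lower bounds and may differ from the ambient meet.
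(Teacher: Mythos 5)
Your proof is correct and follows essentially the same route as the paper's: verifying the hypotheses of Proposition~\ref{prop:latticeCondition} by combining Proposition~\ref{prop:supInvariantTactical} (applied to joins taken in $\Pi(m,n)$) with the observation that the pair of discrete partitions is the bottom element and is trivially a tactical decomposition. One caveat: your worry about the second factor carrying a ``reverse refinement'' order stems from a typo in the paper's definition of $\Pi(m,n)$ (the condition $\mathcal{D}\le\mathcal{B}$ should read $\mathcal{B}\le\mathcal{D}$); the paper's own proofs --- and your actual verifications --- use the standard coordinatewise order, under which the discrete/discrete pair is indeed the least element and coordinatewise joins are suprema, so your argument stands as written.
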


\begin{proof}
We verify the conditions of Proposition~\ref{prop:latticeCondition}.
The bottom element of $\Pi(m,n)$ is the pair of discrete partitions.
This pair is a tactical decomposition for any $\mathcal{M}$. Let
$S\subseteq\Pi_{\mathcal{M}}(m,n)$ and $\bigvee S$ be the supremum
of $S$ taken in the lattice $\Pi(m,n)$. Then $\bigvee S\in\Pi_{\mathcal{M}}$
by Proposition~\ref{prop:supInvariantTactical}. So the supremum
of $S$ in $\Pi_{\mathcal{M}}$ is $\bigvee S$.
\end{proof}

\section{Coarsest tactical refinement}

In this section we develop an algorithm that finds the coarsest tactical
decomposition that is not larger than a given decomposition.
\begin{prop}
If $\mathcal{M}\subseteq\mathbb{R}^{m\times n}$ and $(\mathcal{A},\mathcal{B})\in\Pi(m,n)$,
then $\bigvee(\Pi_{\mathcal{M}}(m,n)\cap\downarrow(\mathcal{A},\mathcal{B}))$
is in $\Pi_{\mathcal{M}}(m,n)$.
\end{prop}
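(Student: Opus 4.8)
The plan is to recognize this as the tactical-decomposition analogue of the single-matrix statement proved earlier in the ``Coarsest invariant refinement'' section, so that the argument reduces to a direct application of Proposition~\ref{prop:supInvariantTactical}. First I would set $S:=\Pi_{\mathcal{M}}(m,n)\cap\downarrow(\mathcal{A},\mathcal{B})$, noting that both the down-set and the join $\bigvee S$ are computed in the ambient product lattice $\Pi(m,n)$ rather than inside the subposet. By construction $S\subseteq\Pi_{\mathcal{M}}(m,n)$, so Proposition~\ref{prop:supInvariantTactical} applies verbatim and yields $\bigvee S\in\Pi_{\mathcal{M}}(m,n)$, which is exactly the claim.

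If a self-contained argument is preferred instead of a one-line citation, I would write $S=\{(\mathcal{A}_i,\mathcal{B}_i)\mid i\in I\}$ and, using that joins in $\Pi(m,n)$ are taken coordinatewise, express $\bigvee S=(\bigvee_i\mathcal{A}_i,\bigvee_i\mathcal{B}_i)$. I would then check the two tactical conditions $M\sys(\bigvee_i\mathcal{B}_i)\subseteq\sys(\bigvee_i\mathcal{A}_i)$ and $M^{T}\sys(\bigvee_i\mathcal{A}_i)\subseteq\sys(\bigvee_i\mathcal{B}_i)$ for every $M\in\mathcal{M}$. Here Proposition~\ref{prop:vee} rewrites each join of synchrony subspaces as an intersection, after which pushing $M$ (respectively $M^{T}$) inside the intersection and invoking that each $(\mathcal{A}_i,\mathcal{B}_i)$ is itself a tactical decomposition closes both inclusions. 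This is precisely the computation already carried out in the proof of Proposition~\ref{prop:supInvariantTactical}, so citing that proposition is the cleaner route.

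There is essentially no genuine obstacle here; the only matters requiring care are bookkeeping. I would be explicit that the join is the supremum in $\Pi(m,n)$, and I would observe that $S$ is nonempty because the bottom element of $\Pi(m,n)$, the pair of discrete partitions, is always a tactical decomposition and lies in $\downarrow(\mathcal{A},\mathcal{B})$. Even this remark is dispensable, since Proposition~\ref{prop:supInvariantTactical} carries no emptiness hypothesis, so the empty join (the bottom element) is covered automatically. The result then sets up the definition of the coarsest tactical refinement exactly as $\cip_{\mathcal{M}}$ was set up in the square case.
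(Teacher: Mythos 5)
Your proposal is correct, and its primary route is genuinely different from (and cleaner than) what the paper does. You reduce the statement to a one-line application of Proposition~\ref{prop:supInvariantTactical}: since $S:=\Pi_{\mathcal{M}}(m,n)\cap\downarrow(\mathcal{A},\mathcal{B})$ is by construction a subset of $\Pi_{\mathcal{M}}(m,n)$, and that proposition (which precedes this one and carries no nonemptiness hypothesis) asserts $\bigvee S\in\Pi_{\mathcal{M}}(m,n)$ for any such subset, the claim follows immediately with no circularity. The paper, by contrast, does not cite Proposition~\ref{prop:supInvariantTactical} at all; it re-derives the computation from scratch, writing $\bigvee X$ coordinatewise, converting each coordinate join of synchrony subspaces into an intersection via Proposition~\ref{prop:vee}, and pushing $M$ through the intersection — and it only spells out the condition $M\sys(\mathcal{B}_{*})\subseteq\sys(\mathcal{A}_{*})$, leaving the $M^{T}$ direction implicit. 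Your fallback self-contained argument is exactly this computation (with the correct inclusion $M\bigcap\{\sys(\mathcal{D})\}\subseteq\bigcap\{M\sys(\mathcal{D})\}$, where the paper's proof sloppily writes an equality that is not needed). What your approach buys is brevity and the elimination of duplicated work; what the paper's buys is a section-local, self-contained proof. One further point in your favor: you correctly take both the down-set and the join in the ambient lattice $\Pi(m,n)$, consistent with the square-matrix case, whereas the paper's proof claims the down-set is taken in $\Pi_{\mathcal{M}}(m,n)$ — a harmless slip, since the intersection with $\Pi_{\mathcal{M}}(m,n)$ renders the distinction irrelevant, but your bookkeeping is the right one.
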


\begin{proof}
Let $X:=\Pi_{\mathcal{M}}(m,n)\cap\downarrow(\mathcal{A},\mathcal{B})$.
Note that the down-set is taken in $\Pi_{\mathcal{M}}(m,n)$. Define
\[
(\mathcal{A}_{*},\mathcal{B}_{*}):=\bigvee X=(\bigvee\{\mathcal{C}\mid(\mathcal{C},\mathcal{D})\in X\},\bigvee\{\mathcal{D}\mid(\mathcal{C},\mathcal{D})\in X\}).
\]
Proposition~\ref{prop:vee} implies that
\[
\begin{aligned}\sys(\mathcal{A}_{*}) & =\bigcap\{\sys(\mathcal{C})\mid(\mathcal{C},\mathcal{D})\in X\},\\
\sys(\mathcal{B}_{*}) & =\bigcap\{\sys(\mathcal{D})\mid(\mathcal{C},\mathcal{D})\in X\}.
\end{aligned}
\]
Let $M\in\mathcal{M}$. Then $M\sys(\mathcal{D})\subseteq\sys(\mathcal{C})$
for all $(\mathcal{C},\mathcal{D})\in X$. So 
\[
\begin{aligned}M\sys(\mathcal{B}_{*}) & =M\bigcap\{\sys(\mathcal{D})\mid(\mathcal{C},\mathcal{D})\in X\}\\
 & =\bigcap\{M\sys(\mathcal{D})\mid(\mathcal{C},\mathcal{D})\in X\}\\
 & \subseteq\bigcap\{\sys(\mathcal{C})\mid(\mathcal{C},\mathcal{D})\in X\}=\sys(\mathcal{A}_{*}).
\end{aligned}
\]
\end{proof}
\begin{defn}
We define $\cip_{\mathcal{M}}:\Pi(m,n)\to\Pi_{\mathcal{M}}(m,n)$
where 
\[
\cip_{\mathcal{M}}(\mathcal{A},\mathcal{B}):=\bigvee(\Pi_{\mathcal{M}}(m,n)\cap\downarrow(\mathcal{A},\mathcal{B}))
\]
 is the \emph{coarsest tactical refinement} that is not larger than
$(\mathcal{A},\mathcal{B})$.
\end{defn}

The following is a recursive process for finding $\cip_{\mathcal{M}}(\mathcal{A},\mathcal{B})$.
\begin{prop}
Let $\mathcal{M}=\{M_{1},\ldots,M_{r}\}\subseteq\mathbb{R}^{m\times n}$.
Given $(\mathcal{A}_{0},\mathcal{B}_{0})=(\mathcal{A},\mathcal{B})\in\Pi(m,n)$,
recursively define $\mathcal{A}_{k+1}:=\psi(\left[\!\begin{array}{c|c}
P(\mathcal{A}_{k}) & Q_{k}\end{array}\!\right])$, where 
\[
Q_{k}:=\left[\!\begin{array}{c|c|c}
M_{1}P(\mathcal{B}_{k}) & \cdots & M_{r}P(\mathcal{B}_{k})\end{array}\!\right]
\]
and $\mathcal{B}_{k+1}:=\psi(\left[\!\begin{array}{c|c}
P(\mathcal{B}_{k}) & R_{k}\end{array}\!\right])$, where 
\[
R_{k}:=\left[\!\begin{array}{c|c|c}
M_{1}^{T}P(\mathcal{A}_{k}) & \cdots & M_{r}^{T}P(\mathcal{A}_{k})\end{array}\!\right].
\]
There is a $k_{0}$ such that $(\mathcal{A}_{k},\mathcal{B}_{k})=\cip_{\mathcal{M}}(\mathcal{A},\mathcal{B})$
for all $k\ge k_{0}$.
\end{prop}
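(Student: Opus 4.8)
The plan is to follow the proof of Proposition~\ref{prop:cip} almost verbatim, now tracking the coupled pair $(\mathcal{A}_k,\mathcal{B}_k)$ in the product lattice $\Pi(m,n)$ and invoking Proposition~\ref{prop:psiTactical} twice: once for the matrices $M_l$ and once for their transposes $M_l^T\in\mathbb{R}^{n\times m}$. First I would rewrite each recursion as a meet. By Proposition~\ref{prop:psiProps}(1) and (2),
\[
\mathcal{A}_{k+1}=\psi(P(\mathcal{A}_k))\wedge\psi(Q_k)=\mathcal{A}_k\wedge\psi(Q_k),\qquad \mathcal{B}_{k+1}=\mathcal{B}_k\wedge\psi(R_k),
\]
so $\mathcal{A}_{k+1}\le\mathcal{A}_k$ and $\mathcal{B}_{k+1}\le\mathcal{B}_k$, whence $(\mathcal{A}_{k+1},\mathcal{B}_{k+1})\le(\mathcal{A}_k,\mathcal{B}_k)$. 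The sequence is therefore a descending chain in the finite lattice $\Pi(m,n)$ and must eventually become constant.

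Second, I would identify the fixed points with the tactical decompositions. Since $\mathcal{A}_{k+1}=\mathcal{A}_k$ exactly when $\mathcal{A}_k\le\psi(Q_k)$, and $\psi(Q_k)=\psi([M_1P(\mathcal{B}_k)\mid\cdots\mid M_rP(\mathcal{B}_k)])$, Proposition~\ref{prop:psiTactical} gives $\mathcal{A}_{k+1}=\mathcal{A}_k$ iff $M_l\sys(\mathcal{B}_k)\subseteq\sys(\mathcal{A}_k)$ for all $l$. Applying the same proposition to the transposed family, with the roles of the two partitions exchanged, gives $\mathcal{B}_{k+1}=\mathcal{B}_k$ iff $M_l^T\sys(\mathcal{A}_k)\subseteq\sys(\mathcal{B}_k)$ for all $l$. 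Together these say precisely that $(\mathcal{A}_{k+1},\mathcal{B}_{k+1})=(\mathcal{A}_k,\mathcal{B}_k)$ iff $(\mathcal{A}_k,\mathcal{B}_k)\in\Pi_{\mathcal{M}}(m,n)$. Consequently, whenever $(\mathcal{A}_k,\mathcal{B}_k)\notin\Pi_{\mathcal{M}}(m,n)$ at least one coordinate strictly drops, so the strictly descending chain must reach some $(\mathcal{A}_{k_0},\mathcal{B}_{k_0})\in\Pi_{\mathcal{M}}(m,n)$, the pair of discrete partitions bounding the chain from below being itself a tactical decomposition; from $k_0$ on the sequence is constant.

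Finally I would show this stable value is the coarsest tactical refinement. Writing $(\mathcal{A}_*,\mathcal{B}_*):=\cip_{\mathcal{M}}(\mathcal{A},\mathcal{B})$, I prove by induction that $(\mathcal{A}_*,\mathcal{B}_*)\le(\mathcal{A}_k,\mathcal{B}_k)$ for all $k$; the base case holds because every element of $\Pi_{\mathcal{M}}(m,n)\cap\downarrow(\mathcal{A},\mathcal{B})$ lies below $(\mathcal{A},\mathcal{B})=(\mathcal{A}_0,\mathcal{B}_0)$, hence so does their join. For the step, $\mathcal{A}_*\le\mathcal{A}_k$ and $\mathcal{B}_*\le\mathcal{B}_k$ give $\sys(\mathcal{A}_*)\supseteq\Col(P(\mathcal{A}_k))$ and $\sys(\mathcal{B}_*)\supseteq\Col(P(\mathcal{B}_k))$; using that $(\mathcal{A}_*,\mathcal{B}_*)$ is a tactical decomposition,
\[
\sys(\mathcal{A}_*)\supseteq M_l\sys(\mathcal{B}_*)\supseteq M_l\Col(P(\mathcal{B}_k))=\Col(M_lP(\mathcal{B}_k))
\]
for every $l$, so $\sys(\mathcal{A}_*)\supseteq\Col([P(\mathcal{A}_k)\mid Q_k])$, i.e. $\Col(P(\mathcal{A}_*))\supseteq\Col([P(\mathcal{A}_k)\mid Q_k])$, and Proposition~\ref{prop:psiProps}(3) yields $\mathcal{A}_*\le\mathcal{A}_{k+1}$. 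The transposed computation gives $\mathcal{B}_*\le\mathcal{B}_{k+1}$. At $k=k_0$ this gives $(\mathcal{A}_*,\mathcal{B}_*)\le(\mathcal{A}_{k_0},\mathcal{B}_{k_0})$, while $(\mathcal{A}_{k_0},\mathcal{B}_{k_0})\in\Pi_{\mathcal{M}}(m,n)\cap\downarrow(\mathcal{A},\mathcal{B})$ forces $(\mathcal{A}_{k_0},\mathcal{B}_{k_0})\le(\mathcal{A}_*,\mathcal{B}_*)$; hence they are equal. The only real obstacle is bookkeeping the cross-coupling—the $\mathcal{A}$-update reads $\mathcal{B}_k$ while the $\mathcal{B}$-update reads $\mathcal{A}_k$—and correctly invoking the transposed form of Proposition~\ref{prop:psiTactical}; once these are arranged, each step is a direct translation of the single-matrix argument.
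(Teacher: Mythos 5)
Your proposal is correct and follows essentially the same route as the paper's own proof: rewrite each update as a meet via Proposition~\ref{prop:psiProps}(1)--(2), characterize fixed points as tactical decompositions via Proposition~\ref{prop:psiTactical} (applied once to the $M_l$ and once to the transposed family with the partitions' roles exchanged), use finiteness plus the discrete pair to guarantee termination, and prove $\cip_{\mathcal{M}}(\mathcal{A},\mathcal{B})\le(\mathcal{A}_k,\mathcal{B}_k)$ by induction with the column-space containments and Proposition~\ref{prop:psiProps}(3). The only cosmetic difference is that you spell out the final antisymmetry step ($(\mathcal{A}_{k_0},\mathcal{B}_{k_0})\in\Pi_{\mathcal{M}}(m,n)\cap\downarrow(\mathcal{A},\mathcal{B})$ forces equality with the join), which the paper leaves implicit.
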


\begin{proof}
Proposition~\ref{prop:psiProps}(1) and $(2)$ imply that 
\[
\mathcal{A}_{k+1}=\psi(\left[\!\begin{array}{c|c}
P(\mathcal{A}_{k}) & Q_{k}\end{array}\!\right])=\psi(P(\mathcal{A}_{k}))\wedge\psi(Q_{k})=\mathcal{A}_{k}\wedge\psi(Q_{k})
\]
and similarly $\mathcal{B}_{k+1}=\mathcal{B}_{k}\wedge\psi(R_{k})$.
Hence $\mathcal{A}_{k+1}\le\mathcal{A}_{k}$ and $\mathcal{B}_{k+1}\le\mathcal{B}_{k}$
for all $k$.

We show that $(\mathcal{A}_{k},\mathcal{B}_{k})\in\Pi_{\mathcal{M}}(m,n)$
if and only if $(\mathcal{A}_{k+1},\mathcal{B}_{k+1})=(\mathcal{A}_{k},\mathcal{B}_{k})$.
If $(\mathcal{A}_{k},\mathcal{B}_{k})\in\Pi_{\mathcal{M}}(m,n)$ then
the forward direction of Proposition~\ref{prop:psiTactical} implies
that $\mathcal{A}_{k}\le\psi(Q_{k})$ and $\mathcal{B}_{k}\le\psi(R_{k})$.
Hence $\mathcal{A}_{k}\le\mathcal{A}_{k}\wedge\psi(Q_{k})=\mathcal{A}_{k+1}$
and $\mathcal{B}_{k}\le\mathcal{B}_{k}\wedge\psi(R_{k})=\mathcal{B}_{k+1}$.
Thus $\mathcal{A}_{k+1}=\mathcal{A}_{k}$ and $\mathcal{B}_{k+1}=\mathcal{B}_{k}$.
If $(\mathcal{A}_{k+1},\mathcal{B}_{k+1})=(\mathcal{A}_{k},\mathcal{B}_{k})$
then 
\[
\begin{aligned}\mathcal{A}_{k} & =\mathcal{A}_{k+1}=\mathcal{A}_{k}\wedge\psi(Q_{k})\le\psi(Q_{k}),\\
\mathcal{B}_{k} & =\mathcal{B}_{k+1}=\mathcal{B}_{k}\wedge\psi(R_{k})\le\psi(R_{k}),
\end{aligned}
\]
and so $(\mathcal{A}_{k},\mathcal{B}_{k})\in\Pi_{\mathcal{M}}(m,n)$
by the backward direction of Proposition~\ref{prop:psiTactical}.

Hence $(\mathcal{A}_{k},\mathcal{B}_{k})\notin\Pi_{\mathcal{M}}(m,n)$
implies that $\mathcal{A}_{k+1}<\mathcal{A}_{k}$ or $\mathcal{B}_{k+1}<\mathcal{B}_{k}$.
Since $\Pi(m,n)$ is finite and the pair of discrete partitions, which
is the minimum element of $\Pi(m,n)$, belongs to $\Pi_{\mathcal{M}}(m,n)$,
the sequence eventually produces a partition $(\mathcal{A}_{k_{0}},\mathcal{B}_{k_{0}})$
in $\Pi_{\mathcal{M}}(m,n)$. From this point on the sequence remains
constant.

Now we use induction on $k$ to show that $\cip_{\mathcal{M}}(\mathcal{A},\mathcal{B})\le(\mathcal{A}_{k},\mathcal{B}_{k})$
for all $k$. This will imply that $\cip_{\mathcal{M}}(\mathcal{A},\mathcal{B})=(\mathcal{A}_{k_{0}},\mathcal{B}_{k_{0}})$.
The base step is clear from the definition. For the inductive step
let $(\mathcal{C},\mathcal{D}):=\cip_{\mathcal{M}}(\mathcal{A},\mathcal{B})$
and assume that $(\mathcal{C},\mathcal{D})\le(\mathcal{A}_{k},\mathcal{B}_{k})$
for some $k$. Then $\sys(\mathcal{C})\supseteq\sys(\mathcal{A}_{k})=\Col(P(\mathcal{A}_{k}))$
and $\sys(\mathcal{D})\supseteq\sys(\mathcal{B}_{k})=\Col(P(\mathcal{B}_{k}))$.
Hence
\[
\sys(\mathcal{C})\supseteq M_{i}\sys(\mathcal{D})\supseteq M_{i}\Col(P(\mathcal{B}_{k}))=\Col(M_{i}P(\mathcal{B}_{k})),
\]
\[
\sys(\mathcal{D})\supseteq M_{i}^{T}\sys(\mathcal{C})\supseteq M_{i}^{T}\Col(P(\mathcal{A}_{k}))=\Col(M_{i}^{T}P(\mathcal{A}_{k})),
\]
for all $M_{i}\in\mathcal{M}$ by the $M_{i}$-invariance of $(\mathcal{C},\mathcal{D})$.
This implies that $\sys(\mathcal{C})\supseteq\Col(Q_{k})$ and $\sys(\mathcal{D})\supseteq\Col(R_{k})$.
Hence
\[
\Col(P(\mathcal{C}))=\sys(\mathcal{C})\supseteq\Col\left[\!\begin{array}{c|c}
P(\mathcal{A}_{k}) & Q_{k}\end{array}\!\right],
\]
\[
\Col(P(\mathcal{D}))=\sys(\mathcal{D})\supseteq\Col\left[\!\begin{array}{c|c}
P(\mathcal{B}_{k}) & R_{k}\end{array}\!\right].
\]
Now Proposition~\ref{prop:psiProps}(3) gives
\[
\mathcal{C}=\psi(P(\mathcal{C}))\le\psi(\left[\!\begin{array}{c|c}
P(\mathcal{A}_{k}) & Q_{k}\end{array}\!\right])=\mathcal{A}_{k+1},
\]
\[
\mathcal{D}=\psi(P(\mathcal{D}))\le\psi(\left[\!\begin{array}{c|c}
P(\mathcal{B}_{k}) & R_{k}\end{array}\!\right])=\mathcal{B}_{k+1}.
\]
Thus $\cip_{\mathcal{M}}(\mathcal{A},\mathcal{B})=(\mathcal{C},\mathcal{D})\le(\mathcal{A}_{k+1},\mathcal{B}_{k+1})$.
\end{proof}

\section{Finding the lattice of tactical decompositions}

An element $(\mathcal{C},\mathcal{D})$ of $\Pi(m,n)$ covers another
$(\mathcal{A},\mathcal{B})$ if either $\mathcal{A}$ can be constructed
from $\mathcal{C}$ by splitting one of the classes of $\mathcal{C}$
into two nonempty sets, or $\mathcal{B}$ can be constructed from
$\mathcal{D}$ by splitting one of the classes of $\mathcal{D}$ into
two nonempty sets. More precisely, we have the following.
\begin{prop}
\label{prop:splitingTactical}For $(\mathcal{A},\mathcal{B}),(\mathcal{C},\mathcal{D})\in\Pi(m,n)$,
$(\mathcal{A},\mathcal{B})\prec(\mathcal{C},\mathcal{D})$ if and
only if either $\mathcal{A}=\mathcal{C}\cup\{A,B\}\setminus\{C\}$
for some $A,B\subset C\in\mathcal{C}$ such that $C=A\mathring{\cup}B$,
or $\mathcal{B}=\mathcal{D}\cup\{A,B\}\setminus\{C\}$ for some $A,B\subset C\in\mathcal{D}$
such that $C=A\mathring{\cup}B$.
\end{prop}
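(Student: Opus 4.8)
The plan is to reduce the statement to Proposition~\ref{prop:spliting}, which already characterizes the covering relation in each factor $\Pi(m)$ and $\Pi(n)$, by first establishing the standard description of covers in a coordinatewise-ordered product poset: namely, that $(\mathcal{A},\mathcal{B})\prec(\mathcal{C},\mathcal{D})$ in $\Pi(m,n)$ holds exactly when one coordinate is a cover in its factor lattice and the other coordinate is unchanged. Once this is in place, translating the two coordinate covers through Proposition~\ref{prop:spliting} yields precisely the two listed alternatives.

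First I would dispatch the reverse implication, which is routine. Suppose $\mathcal{A}=\mathcal{C}\cup\{A,B\}\setminus\{C\}$ with $C=A\mathring{\cup}B\in\mathcal{C}$ and $\mathcal{B}=\mathcal{D}$. By Proposition~\ref{prop:spliting} this says exactly that $\mathcal{A}\prec\mathcal{C}$ in $\Pi(m)$, so in particular $(\mathcal{A},\mathcal{B})<(\mathcal{C},\mathcal{D})$. If some $(\mathcal{E},\mathcal{F})$ satisfied $(\mathcal{A},\mathcal{B})<(\mathcal{E},\mathcal{F})<(\mathcal{C},\mathcal{D})$, then the second coordinate forces $\mathcal{F}=\mathcal{B}=\mathcal{D}$ while $\mathcal{A}<\mathcal{E}<\mathcal{C}$, contradicting $\mathcal{A}\prec\mathcal{C}$. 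Hence the pair is a cover. The second alternative, splitting a class of $\mathcal{D}$, is handled by the symmetric argument in the $\Pi(n)$ coordinate.

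For the forward implication, assume $(\mathcal{A},\mathcal{B})\prec(\mathcal{C},\mathcal{D})$, so $\mathcal{A}\le\mathcal{C}$ and $\mathcal{B}\le\mathcal{D}$ with not both equalities. The one genuinely load-bearing step is to insert the intermediate element $(\mathcal{C},\mathcal{B})$, which satisfies $(\mathcal{A},\mathcal{B})\le(\mathcal{C},\mathcal{B})\le(\mathcal{C},\mathcal{D})$. Since the original pair is a cover, $(\mathcal{C},\mathcal{B})$ must equal one of the endpoints. If $(\mathcal{C},\mathcal{B})=(\mathcal{A},\mathcal{B})$ then $\mathcal{A}=\mathcal{C}$, forcing $\mathcal{B}<\mathcal{D}$; any $\mathcal{F}$ with $\mathcal{B}<\mathcal{F}<\mathcal{D}$ would yield the strictly intermediate pair $(\mathcal{C},\mathcal{F})$, so in fact $\mathcal{B}\prec\mathcal{D}$. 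If instead $(\mathcal{C},\mathcal{B})=(\mathcal{C},\mathcal{D})$ then $\mathcal{B}=\mathcal{D}$ and $\mathcal{A}<\mathcal{C}$, and the same reasoning gives $\mathcal{A}\prec\mathcal{C}$. Applying Proposition~\ref{prop:spliting} to whichever coordinate cover occurred then rewrites it in the split form: $\mathcal{A}\prec\mathcal{C}$ becomes $\mathcal{A}=\mathcal{C}\cup\{A,B\}\setminus\{C\}$ with $C=A\mathring{\cup}B\in\mathcal{C}$, and $\mathcal{B}\prec\mathcal{D}$ becomes $\mathcal{B}=\mathcal{D}\cup\{A,B\}\setminus\{C\}$ with $C=A\mathring{\cup}B\in\mathcal{D}$, matching the two alternatives.

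I expect no serious obstacle: this is the familiar description of covers in a product of finite lattices. The only care required is in the bookkeeping, since Proposition~\ref{prop:spliting} is phrased as a merge producing the coarser partition, which must be read in reverse as a split of a single class, and in treating the two coordinates symmetrically. The insertion of $(\mathcal{C},\mathcal{B})$ is the single idea that drives the forward direction; everything else is a direct appeal to the established covering relation in $\Pi(m)$ and $\Pi(n)$.
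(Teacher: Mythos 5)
Your proof is correct, but note that the paper offers no proof of Proposition~\ref{prop:splitingTactical} at all: it is stated as the ``more precise'' form of the informal sentence preceding it, just as Proposition~\ref{prop:spliting} is stated without proof, so your argument supplies what the authors treat as a standard fact about covers in a finite product poset. Your route is the natural one, and the load-bearing step is exactly right: interpolating $(\mathcal{C},\mathcal{B})$ between $(\mathcal{A},\mathcal{B})$ and $(\mathcal{C},\mathcal{D})$ forces every cover in the product to be a cover in one coordinate with the other coordinate frozen, after which Proposition~\ref{prop:spliting}, read in reverse as a split rather than a merge, converts each coordinate cover into the stated splitting condition. One thing you did tacitly deserves emphasis: as literally written, the ``if'' direction of the proposition is false, because the displayed alternatives say nothing about the untouched coordinate (splitting a class of $\mathcal{C}$ while simultaneously refining $\mathcal{D}$ satisfies the first alternative but yields a pair that is not a cover, or not even below $(\mathcal{C},\mathcal{D})$). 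Your reading, which adds $\mathcal{B}=\mathcal{D}$ to the first alternative and $\mathcal{A}=\mathcal{C}$ to the second, is clearly the intended one, and your forward direction in fact proves this stronger conclusion. A last small point: you assume the order on $\Pi(m,n)$ is coordinatewise, while the paper's definition literally reads $(\mathcal{A},\mathcal{B})\le(\mathcal{C},\mathcal{D})$ iff $\mathcal{A}\le\mathcal{C}$ and $\mathcal{D}\le\mathcal{B}$; this appears to be a typo, since elsewhere the bottom element of $\Pi(m,n)$ is the pair of discrete partitions and Sections~10 and~11 use the coordinatewise order, so your reading is again the consistent one.
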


Proposition~\ref{prop:splitingTactical} and the cir algorithm can
be combined into a split and cir algorithm to find all $\mathcal{M}$-invariant
tactical decompositions. We start with $(\mathcal{A},\mathcal{A})\in\Pi(m,n)$
built from $\mathcal{A}=1\cdots n$. The cir algorithm produces $\cip_{\mathcal{M}}(\mathcal{A},\mathcal{A})$
which is the coarsest $\mathcal{M}$-invariant tactical decomposition.
We put this tactical decomposition into a queue of $\mathcal{M}$-invariant
tactical decompositions to further analyze. For each $\mathcal{M}$-invariant
tactical decomposition $(\mathcal{A},\mathcal{B})$ in the queue,
we find each lower cover $(\mathcal{C},\mathcal{D})$ of $(\mathcal{A},\mathcal{B})$
by splitting one of the classes of $\mathcal{A}$ or one of the classes
of $B$. Then we use the cir algorithm to find $\cip_{\mathcal{M}}(\mathcal{C},\mathcal{D})$
and add it to the queue. The algorithm stops when the queue is empty.
\begin{example}
\begin{figure}
\begin{tabular}{ccccc}
\begin{tabular}{c}
\includegraphics{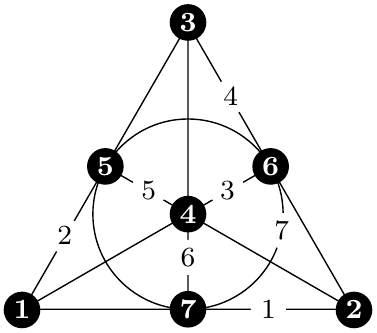}\tabularnewline
\end{tabular} &  & $M=\left[\begin{smallmatrix}1 & 1 & 1 & 0 & 0 & 0 & 0\\
1 & 0 & 0 & 1 & 1 & 0 & 0\\
0 & 1 & 0 & 1 & 0 & 1 & 0\\
0 & 0 & 1 & 0 & 1 & 1 & 0\\
0 & 1 & 0 & 0 & 1 & 0 & 1\\
0 & 0 & 1 & 1 & 0 & 0 & 1\\
1 & 0 & 0 & 0 & 0 & 1 & 1
\end{smallmatrix}\right]$ &  & %
\begin{tabular}{c}
\includegraphics{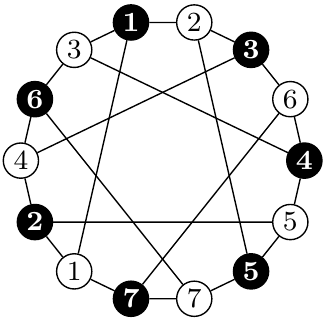}\tabularnewline
\end{tabular}\tabularnewline
(i) &  & (ii) &  & (iii)\tabularnewline
\end{tabular}

\caption{\label{fig:FanoDef}(i) The Fano plane. (ii) The incidence matrix
$M$. (iii) The incidence graph of the Fano plane.}

\end{figure}

\begin{figure}
\setlength{\tabcolsep}{1pt}

\begin{tabular}{cccc}
\begin{tabular}{c}
\includegraphics{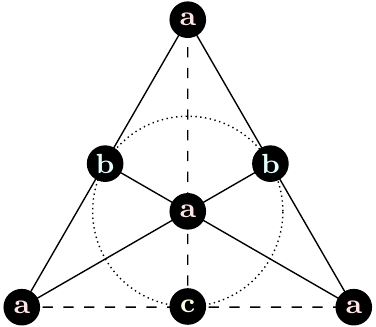}\tabularnewline
\end{tabular} & ~ & %
\begin{tabular}{ccccc}
\begin{tabular}{c}
$1234567$\tabularnewline
$1234567$\tabularnewline
\end{tabular} & ~ & %
\begin{tabular}{c}
$1234|56|7$\tabularnewline
$16|2345|7$\tabularnewline
\end{tabular} & ~ & %
\begin{tabular}{c}
$12|34|56|7$\tabularnewline
$1|2345|6|7$\tabularnewline
\end{tabular}\tabularnewline
$(\mathcal{A}_{1},\mathcal{B}_{1})$ &  & $(\mathcal{A}_{4},\mathcal{B}_{4})$ &  & $(\mathcal{A}_{7},\mathcal{B}_{7})$\tabularnewline
 &  &  &  & \tabularnewline
\begin{tabular}{c}
$123456|7$\tabularnewline
$167|2345$\tabularnewline
\end{tabular} &  & %
\begin{tabular}{c}
$1234|5|6|7$\tabularnewline
$16|25|34|7$\tabularnewline
\end{tabular} &  & %
\begin{tabular}{c}
$12|34|5|6|7$\tabularnewline
$1|25|34|6|7$\tabularnewline
\end{tabular}\tabularnewline
$(\mathcal{A}_{2},\mathcal{B}_{2})$ &  & $(\mathcal{A}_{5},\mathcal{B}_{5})$ &  & $(\mathcal{A}_{8},\mathcal{B}_{8})$\tabularnewline
 &  &  &  & \tabularnewline
\begin{tabular}{c}
$1234|567$\tabularnewline
$123456|7$\tabularnewline
\end{tabular} &  & %
\begin{tabular}{c}
$123|4|567$\tabularnewline
$124|356|7$\tabularnewline
\end{tabular} &  & %
\begin{tabular}{c}
$1|2|3|4|5|6|7$\tabularnewline
$1|2|34|5|6|7$\tabularnewline
\end{tabular}\tabularnewline
$(\mathcal{A}_{3},\mathcal{B}_{3})$ &  & $(\mathcal{A}_{6},\mathcal{B}_{6})$ &  & $(\mathcal{A}_{9},\mathcal{B}_{9})$\tabularnewline
\end{tabular}~ & %
\begin{tabular}{c}
\includegraphics{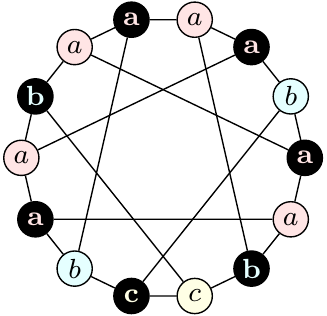}\tabularnewline
\end{tabular}\tabularnewline
(i) &  & (ii) & (iii)\tabularnewline
\end{tabular}

\caption{\label{fig:Fano}(i) The tactical decomposition $(\mathcal{A}_{4},\mathcal{B}_{4})$.
(ii) Orbit representatives of the tactical decompositions of $M$.
(iii) The tactical decomposition $(\mathcal{A}_{4},\mathcal{B}_{4})$
on the incidence graph.}
\end{figure}

The Fano plane is shown in Figure~\ref{fig:FanoDef}. The split and
cir algorithm finds 100 tactical decompositions of the incidence matrix.
 Figure~\ref{fig:Fano}(ii) shows the orbit representatives. The
self duality $\left(\begin{smallmatrix}1 & 2 & 3 & 4 & 5 & 6 & 7\\
2 & 5 & 3 & 4 & 1 & 6 & 7
\end{smallmatrix}\right)$ is visible as a horizontal reflection of the incidence graph. The
two line notation means that each point in the first row is swapped
with the corresponding line in the second row. This duality maps the
tactical decomposition $(\mathcal{A}_{2},\mathcal{B}_{2})$ to $(\mathcal{A}_{3},\mathcal{B}_{3})$,
maps $(\mathcal{A}_{5},\mathcal{B}_{5})$ to $(\mathcal{A}_{7},\mathcal{B}_{7})$,
and maps $(\mathcal{A}_{i},\mathcal{B}_{i})$ to itself for $i\in\left\{ 1,4,8,9\right\} $.
Finally, $(\mathcal{A}_{6},\mathcal{B}_{6})$ maps to itself by $\left(\begin{smallmatrix}1 & 2 & 3 & 4 & 5 & 6 & 7\\
4 & 2 & 1 & 7 & 5 & 3 & 6
\end{smallmatrix}\right)$. This self duality also acts as a reflection on the incidence graph.

For the tactical decomposition $(\mathcal{A}_{4},\mathcal{B}_{4})$,
shown in Figure~\ref{fig:Fano}(i), the first duality maps points
labeled $a$ to solid lines, points labeled $b$ to dashed lines,
and the point labeled $c$ to the dotted line. This corresponds to
the equitable partition of the incidence graph shown in Figure~\ref{fig:Fano}(iii).
\end{example}

\bibliographystyle{plain}
\bibliography{nss}

\end{document}